\newtheorem{thm2}{Theorem}
\newtheorem{cor2}{Corollary}
\newtheorem{conj2}{Conjecture}
\newtheorem{defn2}{Definition}
\newtheorem{thm}{Theorem}[section]
\newtheorem{cor}[thm]{Corollary}
\newtheorem{lem}[thm]{Lemma}
\newtheorem{prop}[thm]{Proposition}
\newtheorem{conj}[thm]{Conjecture}
\theoremstyle{definition}
\newtheorem{defn}[thm]{Definition}
\newtheorem{ex}[thm]{Example}
\newtheorem{rmk}[thm]{Remark}
\newtheorem{ques}[thm]{Question}
\DeclareMathOperator{\End}{End} 
\DeclareMathOperator{\Hom}{Hom}
\newcommand{\C}{\ensuremath\mathds{C}}
\newcommand{\R}{\ensuremath\mathrm{R}}
\newcommand{\Z}{\ensuremath\mathds{Z}}
\newcommand{\Q}{\ensuremath\mathds{Q}}
\newcommand{\HH}{\ensuremath\mathrm{H}}
\newcommand{\CH}{\ensuremath\mathrm{CH}}
\newcommand{\DCH}{\ensuremath\mathrm{DCH}}
\begin{document}

	\title[On Bloch's conjecture for abelian varieties]{Generic cycles, Lefschetz
		representations, and the generalized Hodge and Bloch conjectures\linebreak
		for abelian varieties}
	\author{Charles Vial}
	
	\thanks{2010 {\em Mathematics Subject Classification.} 14C25, 14C15, 14C30,
		14K10}
	
	\thanks{{\em Key words and phrases.}  Algebraic cycles, Abelian varieties,
		Motives, Chow groups, Bloch--Beilinson filtration, Bloch conjecture, Lefschetz
		group, generalized Hodge conjecture,
		generalized Kummer varieties, symplectomorphisms}
	
	\address{
		Universit\"at Bielefeld, Germany}
	\email{vial@math.uni-bielefeld.de}
	
	\date{}
	
	\begin{abstract} 
		We prove Bloch's conjecture for correspondences on powers of complex
		abelian varieties, that are ``generically defined''. As an application we
		establish vanishing results for (skew-)symmetric cycles on powers of abelian
		varieties and we
		address a question of Voisin concerning (skew-)symmetric cycles on powers
		of K3 surfaces in the case of Kummer surfaces. We also prove  Bloch's
		conjecture in the following situation. Let $\gamma$ be a correspondence
		between two  abelian varieties $A$ and $B$ that can be written as
		a linear combination of products of symmetric divisors. Assume that $A$
		is isogenous to the product of an abelian variety of totally real type with
		the power of an
		abelian surface. We show that $\gamma$ satisfies the conclusion of Bloch's
		conjecture. A key ingredient consists in establishing a strong form of the
		generalized Hodge conjecture for Hodge sub-structures of the cohomology of $A$
		that arise as sub-representations of the Lefschetz group of $A$. As a
		by-product of our method, we use a strong form of the generalized Hodge
		conjecture established for powers of abelian surfaces to show that every
		finite-order symplectic automorphism of a
		generalized Kummer variety acts as the identity on the zero-cycles.
	\end{abstract}
	
	\maketitle

	\vspace{-10pt}
	\section*{Introduction}
	
	Throughout this note, Chow groups are with rational coefficients.
	Let $X$ be a smooth projective complex variety of dimension $d$ and let
	$\gamma$
	be a correspondence in $\CH^d(X\times X)$ such that $\gamma_*\CH_0(X)=0$. The
	Bloch--Srinivas argument \cite{bs} implies  that $\gamma^*\HH^{*}(X,\Q)$ is
	supported on a divisor, which in turn implies that  $\gamma^*\HH^{i,0}(X) = 0$
	for
	all integers $i$.  The Bloch conjecture stipulates that, conversely,  should $\gamma^*\HH^{i,0}(X)$ vanish
	for
	all integers $i$, then $\gamma$ acts nilpotently on $\CH_0(X)$. (In fact, the conjecture predicts that $\gamma$ should act as zero on the graded pieces of the conjectural Bloch--Beilinson filtration on $\CH_0(X)$.)
	
	More generally, if 	$\gamma_*\CH_r(X) = 0$ for all $r<n$, then the
	Bloch--Srinivas
	argument  implies  that $\gamma^*\HH^{*}(X,\Q)$ is supported on a subscheme of
	codimension~$n$, which in turn implies that $\gamma^*\HH^{i,j}(X) = 0$
	for	all integers $i$ and $j<n$. 
	The generalized
	Bloch conjecture is the following converse assertion\,:
	
	\begin{conj2}[Generalized Bloch conjecture]\label{conj:genBloch}
		Let $X$ be a smooth projective complex variety of dimension $d$, and let $\gamma \in
		\CH^d(X \times X)$ be a correspondence. Suppose that  $\gamma^*\HH^{i,j}(X)=0$
		for
		all $j<n$, or, equivalently in terms of the Hodge coniveau
		filtration, $\gamma^*
		\HH^*(X,\Q) \subseteq \mathrm{N}_H^n\HH^*(X,\Q)$. Then $\gamma_*$ acts nilpotently on $\CH_r(X)$
		for
		all $r<n$.
	\end{conj2}
	
	The conjecture is wide open, but has notably been established for surfaces with
	$\HH^{2,0}=0$ not of general type \cite{bkl}, for certain 
	surfaces with $\HH^{2,0}=0$ of general type \cite{voisingodeaux,
		voisincatanese}, and for finite-order symplectic automorphisms of K3 surfaces
	\cite{voisink3, huybrechts}. 
	\medskip
	
	Conjecture \ref{conj:genBloch} follows from the combination of (a) the generalized Hodge conjecture (for smooth projective varieties, and not just for $X$) and~(b) the existence of the conjectural Bloch--Beilinson filtration. Indeed, if $\gamma^*\HH^{i,j}(X)=0$
	for	all $j<n$, then the generalized Hodge conjecture for~$X$ implies that $\gamma^*\HH^{*}(X,\Q)$ is supported on a closed subscheme $X$ of codimension~$n$. By~\cite{andreIHES}, the standard conjectures (for $\tilde Z\times X$, where $\tilde Z \to Z$ is a desingularization) then provide a self-correspondence $p \in \CH^d(X\times X)$ supported on $Z\times X$ such that $p$ induces in cohomology a projector with image $\gamma^*\HH^{*}(X,\Q)$ (see Conjecture~\ref{conj:strongGHC}). It follows that $\gamma \circ (\Delta_X - p)$ acts trivially on $\HH^*(X,\Q)$, \emph{i.e.}, that $\gamma \circ (\Delta_X - p)$ is homologically trivial.
	In particular, if $\mathrm{F}^\bullet$ denotes the conjectural Bloch--Beilinson filtration, $\gamma_* \circ (\Delta_X - p)_*$ sends $\mathrm{F}^l\CH_r(X)$ to $\mathrm{F}^{l+1}\CH_r(X)$ for all $l$ and all $r$. Since conjecturally $\mathrm{F}^{r+1}\CH_r(X) = 0$ for all $r$, we find that $\gamma_* \circ (\Delta_X - p)_*$ is nilpotent. (Alternately, the conjectural Kimura--O'Sullivan finiteness for $X$ implies that $\gamma \circ (\Delta_X - p)$ is nilpotent). 
	 Finally, for support reasons, $p_*$ acts as zero on $\CH_r(X)$ for $r<n$, and we conclude that $\gamma_* $ acts nilpotently on $\CH_r(X)$ for $r<n$. 
	 
	  We note that by applying Conjecture \ref{conj:genBloch} to $\gamma = \Delta_X - \sum_i \pi_\mathrm{alg}^{2i}$, where the $\pi_\mathrm{alg}^{2i}$ are projectors on the degree-$2i$ Hodge classes (which conjecturally exist), one recovers the more classical version of the generalized Bloch conjecture stated \emph{e.g.} in \cite[Conj.~1.9]{voisinbook}. We also note that one cannot conclude in general that $\gamma_* $ acts as zero on $\CH_r(X)$ for $r<n$. Consider indeed a smooth projective curve $C$ of positive genus and the correspondence $\gamma = C\times \alpha$, where $\alpha$ is a non-zero degree-$0$ $0$-cycle on $C$\,; then $\gamma^*\HH^*(C,\Q) = 0$ and $(\gamma \circ \gamma)_*\CH_0(C)= 0$, but $\gamma_*\CH_0(C) = \Q \alpha \neq 0$. \medskip
	
	\vspace{-10pt}
	Our	main results are Theorem~\ref{T:main2} and Theorem~\ref{T:mainLef}. We
	establish the generalized Bloch
	conjecture for certain correspondences between abelian
	varieties, that are of two types\,: either
	``generically defined'', or belong to the sub-algebra generated by symmetric
	divisors (with some further assumptions on the abelian varieties). In both cases, the strategy consists in first showing  that the
	Hodge sub-structure $\gamma^*
	\HH^*(X,\Q)$ is supported in codimension~$n$ in a strong sense (existence of a cycle $p\in \CH^d(X\times X)$ as in the discussion above with additional properties), in particular
	that the generalized Hodge conjecture for $\gamma^*\HH^*(X,\Q)$ holds\,; see
	Propositions~\ref{P:niveau} and~\ref{prop:projlef}.
	For that matter, we formulate in Conjecture \ref{conj:strongGHC} a strong (but equivalent, when considered for all complex smooth projective varieties) version of the generalized Hodge conjecture.
	 This information on the cohomological support of $\gamma$ is then lifted to rational equivalence thanks either to Kimura-O'Sullivan finite-dimensionality (Theorem~\ref{T:Kimura}) or to a recent result of O'Sullivan (Theorem~\ref{T:Osullivan}). In the latter case, that is, when $\gamma$ is in addition \emph{symmetrically distinguished} (see \S \ref{sec:symdist}), then one can conclude that $\gamma_*\CH_r(X) = 0$
	for
	all $r<n$ (see Theorems~\ref{T:main2}(2) and~\ref{T:mainLef}).\medskip

	\vspace{-5pt}
	\subsection{Generically defined cycles}
	A \emph{generically defined cycle} on the $m$-fold power of a
	polarized complex abelian variety $A$ of degree $d^2$ and dimension $g$ is a
	cycle (with rational coefficients) in $\CH^*(A^m)$ that is the
	restriction, for some integer $N\geq 3$ (in fact, by Remark \ref{rmk:N},
	\emph{for any} integer $N\geq 3$), of a cycle on the $m$-fold power of the
	universal polarized abelian
	variety of degree $d^2$ and dimension $g$ with level-$N$ structure\,; see
	Definition~\ref{D:unidef}.
	A generically
	defined self-correspondence on the $m$-fold power of complex polarized abelian
	varieties
	of degree $d^2$ and dimension $g$ is a generically defined cycle of codimension
	$mg$ on the
	$2m$-fold power of polarized complex abelian varieties  of degree $d^2$ and  of
	dimension~$g$.\medskip
	
	Our first main result is Theorem~\ref{T:main2}, a special instance of which is the following\,:
	
	\begin{thm2}\label{T:main}
		Suppose that $\gamma$ is a generically defined correspondence on
		the
		$m$-fold power of polarized complex abelian varieties. Assume
		that $\gamma^*\HH^{i,j}(A^m)=0$ for all $j<n$
		for some (equivalently, for all) polarized complex abelian variety $A$ of
		dimension $g$ and degree~$d^2$. Then
		$\gamma_*$ acts nilpotently on $\CH_r(A^m)$ for all $r<n$.
	\end{thm2}

	The proof  consists in first establishing Theorem
	\ref{T:main} for a very general complex abelian variety~$A$. For such a
	variety, a
	strong form of the generalized Hodge conjecture (as in Conjecture \ref{conj:strongGHC}) holds (Hazama's
	Theorem~\ref{T:gen2})
	and
	makes it possible to interpret  the $n$-th Hodge coniveau part (see Definition
	\ref{D:Hconiveau}) $\mathrm{N}_H^n \HH^*(A^m,\Q)$  as a ``generically defined''
	sub-motive of $A^m$ whose Tate twist by $n$ is effective and, in fact,
	isomorphic to a direct summand of a finite direct sum of motives of $A^m$\,;
	see the key
	Proposition
	\ref{P:niveau}. One can conclude by using the Kimura finite-dimensionality
	\cite{kimura} of motives of abelian varieties. One establishes Theorem
	\ref{T:main} for all abelian
	varieties by specialization.
	\medskip
	
	Recall from Beauville \cite{beauville} that the Chow group of zero-cycles on an abelian variety $A$ of dimension $g$ splits into eigenspaces as $$\CH_0(A) = \CH_0(A)_{(0)} \oplus \cdots \oplus \CH_0(A)_{(g)},$$ where $\CH_0(A)_{(i)} = \{a \in \CH_0(A) : [n]_*a = n^ia \ \text{for all} \ n\in \Z \}$ with $[n]:A\to A$ the multiplication-by-$n$ homomorphism.
	As an application of Theorem \ref{T:main}, we obtain\,:
	
	\begin{cor2}[Theorem \ref{T:sym}]
	Let $A$ be an abelian variety of dimension $g$, and let $i$ be a nonnegative
integer.
Let $N > \binom{g}{i}$ and suppose that $a_j$, $1\leq j \leq N$, are
zero-cycles on $A$ such that $[n]_*a_j = n^i a_j$ for all integers $n$. Then the following holds.
\begin{itemize}
	\item For $i$ odd, the symmetrization of $ a_{1}
	\times \cdots\times a_{N} $ vanishes, \emph{i.e.} $$\sum_{\sigma \in \mathfrak{S}_N} a_{\sigma(1)}
	\times \cdots\times a_{\sigma(N)} = 0  \quad \text{in}\ \CH_0(A^N).$$
	\item For $i$ even, the anti-symmetrization of $ a_{1}
	\times \cdots\times a_{N} $ vanishes, \emph{i.e.} $$\sum_{\sigma \in \mathfrak{S}_N}
	\mathrm{sgn}(\sigma)\,  a_{\sigma(1)} \times \cdots\times a_{\sigma(N)} = 0  \quad \text{in}\ 
	\CH_0(A^N).$$
\end{itemize}
	\end{cor2}

	\subsection{Lefschetz sub-representations}\label{sec:lefintro}

	Let $A$ be an abelian variety. We define $$\mathrm{R}^*(A)\subseteq \CH^*(A)$$ to be the
	sub-algebra
	of $\CH^*(A)$ generated by symmetric divisors. Note that if  $B$ is another
	abelian variety, then the class of the graph of any homomorphism $A\to B$
	belongs to $\R^*(A\times B)$ (see Proposition~\ref{prop:R}).
	As a link to Theorem~\ref{T:main}, we note that all generically defined cycles on the
	$m$-fold power of an abelian
	variety
	$A$ that we consider in explicit examples belong to  $\R^*(A^m)$\,; 
  see however Question~\ref{R:gdsd}. We can prove the generalized Bloch
	conjecture for correspondences that belong to $\R^*$ on certain abelian
	varieties
	(which are not necessarily very general).
	
	\begin{defn2}[Abelian varieties of totally real type] \label{def:totreal} 
		An abelian variety $A$ is said to be of \emph{totally real type} if the center
		of its endomorphism ring $\End^0(A):=\End(A)\otimes_{\Z}\Q$ is isomorphic to a product of totally
		real
		fields. Equivalently, $A$ is of totally real type if it is isogenous to
		$A_1^{m_1} \times \cdots \times A_s^{m_s}$ with the $A_i$ simple of type I,
		II,
		or III (see \S\ref{sec:albertlef}).
	\end{defn2}	
	
	Our second main result is Theorem \ref{T:mainLef}, a special instance of which is the following\,:	
	
	\begin{thm2}\label{T:main3}
		Let $A$ and $B$ be two abelian varieties, and let $\gamma$ be a cycle in
		$\mathrm{R}^*(A\times B)$. Suppose that $A$ is  of totally real type.
		If
		$\gamma^*\HH^{i,j}(B)=0$ for all $j<n$, then
		$\gamma_*\CH_r(A) = 0$ for all $r<n$.
	\end{thm2}	
	
	There are two main arguments entering the proof of Theorem \ref{T:main3}. First, as explained in \S
	\ref{sec:lefschetz},
	the fact that $\gamma$ belongs to $\mathrm{R}^*(A\times B)$ implies that the
	Hodge sub-structure $\gamma^*\HH^*(B,\Q)$ is a sub-representation of the
	\emph{Lefschetz group} of $A$ acting on $\HH^*(A,\Q)$. The first step does not consist in establishing the
	generalized Hodge conjecture for $A$ but, instead, consists in
	showing that any sub-representation of the Lefschetz group of $A$ acting on
	$\HH^*(A,\Q)$ satisfies a strong form of the generalized Hodge conjecture (as in Conjecture~\ref{conj:strongGHC})\,;
	see Proposition~\ref{prop:projlef}. We note that if $A$ is a very general complex abelian variety, then $\End^0(A)=\Q$ and, by coincidence of the Lefschetz group of $A$ with its Hodge group, every Hodge sub-structure of $\HH^*(A^m,\Q)$ is a  Lefschetz sub-representation. The generalized Hodge conjecture for self-powers of the very general complex abelian variety was established by Hazama \cite{hazama} (see Theorem \ref{T:gen2}). By shifting our attention to Lefschetz sub-representations, we can generalize the aforementioned result of Hazama (we refer to
	Theorem \ref{thm:lefhodge} for a more precise statement)\,:

	\begin{thm2}[strong GHC for Lefschetz sub-representations of abelian varieties of
		totally real type]
		Let $A$ be a complex abelian variety, and let $H \subseteq \HH^k(A,\Q)$ be a
		Lefschetz sub-representation of Hodge level $\leq k-2n$. Suppose that $A$ is
		of totally real type.
		Then $H$ satisfies the strong generalized Hodge conjecture \ref{conj:strongGHC}, in particular, $H$ is supported on a closed subset of codimension~$n$.
	\end{thm2}

	In the second step, instead of using Kimura's finite-dimensionality which would only yield that $\gamma$ acts nilpotently on $\CH_r(A)$ for all $r<n$, we utilize a recent powerful result
	of
	O'Sullivan~\cite{o'sullivan} which in particular implies that the ring $\R^*(A)$ injects into
	cohomology for all abelian varieties~$A$. 
	We refer to the proof of Theorem \ref{T:mainLef} for the details.
	
	Unfortunately, our method for establishing  (a strong form of) the generalized Hodge conjecture
	for Lefschetz sub-representations of $\HH^i(A^m,\Q)$ for $A$ of totally real
	type does not seem to extend in a direct way to the  interesting case of abelian varieties of type IV or even to that of abelian
	varieties of CM type\,; see Remark \ref{rmk:limit}.
	As far as we know,
	the conjecture is still open for the product of four pairwise non-isogenous CM
	elliptic curves. 
	
	Nonetheless, the generalized Hodge conjecture was
	established by Abdulali \cite{abdulali} for powers of a simple abelian surface
	of CM type 
	(see Theorem
	\ref{thm:GHC}). Abdulali's proof yields a strong form of the  generalized Hodge conjecture (as in Conjecture~\ref{conj:strongGHC})
	for powers of abelian varieties of dimension~$\leq 2$ (see Corollary
	\ref{cor:GHCsurface}). Using Abdulali's theorem, we establish in Theorem~\ref{T:mainLef} a slightly
	more general version of Theorem~\ref{T:main3} by allowing $A$ to be isogenous to the product of an abelian
	variety of totally real type with either the power of a CM abelian surface or a
	product of powers of three CM elliptic curves. Again the key input consists in establishing a strong form of the generalized Hodge conjecture for Lefschetz sub-representations (Proposition~\ref{prop:projlef}).
	Since the case of powers of abelian surfaces is particularly telling due to the
	link with so-called generalized Kummer varieties, we single out the following
	statement from Theorem~\ref{T:mainLef}\,:
	
	\begin{thm2}\label{T:mainsurface}
		Let $A$ and $B$ be two abelian varieties, and let $\gamma$ be a cycle in
		$\mathrm{R}^*(A\times B)$. Suppose that $A$ is isogenous to a power of an
		abelian variety of dimension $\leq 2$.
		If
		$\gamma^*\HH^{i,j}(B)=0$ for all $j<n$, then
		$\gamma_*\CH_r(A) = 0$ for all $r<n$.
	\end{thm2}

	\subsection{Applications} Section~\ref{S:ex} is concerned with concrete applications of the above results. Specifically, 
	Theorems
	\ref{T:sym}, \ref{T:Lin} and \ref{T:Alt2Kum} provide vanishing results for (skew)-symmetric cycles on
	powers of abelian varieties and generalized Kummer varieties, while Theorem~\ref{T:voisin} settles a conjecture of
	Voisin about K3 surfaces in the case of Kummer surfaces. All these results are proved as consequences of Theorem \ref{T:main}, so that the reader interested only in those can skip reading Section \ref{sec:LR} entirely.
	Finally, in \S \ref{sec:motab} we establish a variant of Theorem~\ref{T:mainsurface} (Theorem~\ref{T:app}) which we use in \S \ref{sec:symplecto} to 
	show that
	a finite-order symplectic automorphism of a generalized Kummer variety acts as
	the
	identity on the Chow group of zero-cycles (see also Proposition~\ref{prop:genkuminduced})\,:
	\begin{thm2}[Theorem~\ref{thm:sympGK}]
		Let $A$ be an abelian surface. If $f$ is a finite-order symplectic
		automorphism of
		the
		generalized Kummer variety $K_n(A)$, then $f_* : \CH_0(K_n(A)) \to
		\CH_0(K_n(A))$ is the identity map.
	\end{thm2}

\vspace{4pt}

\noindent \textbf{Acknowledgments.} I would like to thank  Robert
Laterveer for bringing to my attention the questions posed by Voisin in
\cite[\S 3]{voisin0}, and Giuseppe Ancona for very useful discussions.

	\section{Preliminaries}
	
	\subsection{Polarized abelian varieties, and level structures}\label{S:def}
	A polarization $L$ on a complex abelian variety $A$ of dimension $g$ is by
	definition the first Chern class of an ample line bundle $\mathcal{L}$. We
	denote $\hat A = \mathrm{Pic}^0(A)$ the dual abelian variety and
	$\mathcal{P}_A$
	the Poincar\'e line-bundle on $A\times \hat A$. 
	Let $\phi_{\mathcal{L}} : A \to \hat{A}$ be the morphism given on points by $a
	\mapsto t_a^*\mathcal{L} \otimes \mathcal{L}^{-1}$. 
	By definition, the degree of the polarization $L$ is the degree of the isogeny
	$\phi_{\mathcal{L}} : A \to \hat{A}$\,; it is a square since we have $\deg(L) =
	\chi(\mathcal{L})^2$. 
	We will often view the Poincar\'e line-bundle $\mathcal{P}_A$ as a line-bundle
	on
	$A\times A$, by pulling back along $\mathrm{id}_A\times \phi_{\mathcal{L}}$ the
	Poincar\'e line-bundle on $A\times \hat{A}$.
	The Fourier--Mukai transform of $\mathcal{L}$ is the sheaf
	$\mathcal{F}(\mathcal{L}) := p_{2,*}(\mathcal{P}_A\otimes p_1^*\mathcal{L})$\,;
	it is a vector-bundle on $\hat A$. The dual polarization $\hat L$ on $\hat A$
	is
	the first Chern class of $\mathrm{det}( \mathcal{F}(\mathcal{L}) )^{-1}$\,; see
	\cite{bl}.
	
	Denote $\iota_\Delta : A \to A\times A$ the diagonal embedding.
	We define a correspondence $\Lambda_A^i$ in $\CH_i(A\times A)$ as follows\,:
	\begin{equation}\label{E:Lambda}
		\Lambda_A^i = \left\{\begin{array}{ll} \iota_{\Delta,*} L^{g-i}  & \mbox{if
				$i\leq g$\,;}\\
			\hat{\mathcal{F}} \circ (\iota_{\Delta,*} {\hat L}^{i-g}) \circ \mathcal{F} &
			\mbox{if $i >g$.}
		\end{array}\right.
	\end{equation}
	Note that $\Lambda_A^i$  induces an isomorphism $\HH^i(A,\Q)
	\stackrel{\simeq}{\longrightarrow} \HH^{2g-i}(A,\Q)$.
	
	Following \cite[\S 6]{mumford}, for a projective abelian scheme $\mathcal{A}
	\to S$ over a 
	Noetherian scheme $S$, we define its dual $\hat{\mathcal{A}} \to S$ to be the
	projective scheme that is 
	the open sub-group-scheme of $\mathrm{Pic}(\mathcal{A}/S)$ whose geometric
	points correspond to the invertible sheaves some power of which are
	algebraically equivalent to zero, and we define a \emph{polarization} on
	$\hat{\mathcal{A}} \to S$ to be a $S$-homomorphism $\mathcal{A} \to \hat{
		\mathcal{A}}$ such that, for all geometric points $\bar{s}$ of $S$, the
	induced
	$\mathcal{A}_{\bar{s}} \to \hat{\mathcal{A}}_{\bar{s}}$ is of the form
	$\phi_{\mathcal{L}}$ for some ample line-bundle $\mathcal{L}$ on
	$\mathcal{A}_{\bar{s}}$.
	
	Let $\mathcal{A} \to S$ be a projective abelian scheme of relative dimension
	$g$ over a 
	Noetherian scheme $S$, and let $N$ be an integer $\geq 2$. Assume that the
	characteristics of the residue fields of all closed points of $S$ do not divide
	$N$.  	A \emph{level-$N$ structure} on $\mathcal{A} \to S$ consists of $2g$
	sections $\sigma_1,\ldots, \sigma_{2g}$ of $\mathcal{A} \to S$ such that their
	restriction to any geometric point $\bar s$ of $S$ provide a basis of the
	$N$-torsion of the fiber of $\mathcal{A} \to S$ over $\bar s$, and such that
	$[N]\circ \sigma_i = 0_{\mathcal A}$ for all $i$, where $[N]$ denotes the
	multiplication-by-$N$ morphism and where $0_{\mathcal A}$ is the identity
	section of $\mathcal{A} \to S$.

	\subsection{Motives of abelian varieties, symmetrically distinguished cycles} \label{sec:symdist}
	We will use freely the language of Chow motives, as is described for instance
	in \cite{andre}. The unit motive is denoted~$\mathds{1}$ and the motive of a
	smooth projective variety is denoted $\mathfrak{h}(X)$. Our convention for the
	Tate twist is such that $\mathfrak{h}(\mathds{P}^1) = \mathds{1} \oplus
	\mathds{1}(-1)$.
	
	The Chow motives of abelian varieties have particularly nice properties. First
	they are finite-dimensional in the sense of Kimura \cite{kimura}. Without going
	into the details of Kimura's notion of finite-dimensionality, let us only
	mention the following property\,:
	
	\begin{thm}[Kimura \cite{kimura}] \label{T:Kimura}
		Let $A$ be a complex abelian variety of dimension $g$, and let $\Gamma \in
		\CH^g(A\times A)$ be a self-correspondence on $A$. Assume that $\Gamma$ is
		numerically trivial. Then
		$\Gamma$
		is nilpotent, \emph{i.e.}, there exists a positive integer $N$ such that
		$\Gamma^{\circ N} = 0$ in $ \CH^g(A\times A)$.
	\end{thm}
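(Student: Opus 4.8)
The plan is to recognize this statement as the nilpotence property of numerically trivial endomorphisms of finite-dimensional motives in the sense of Kimura. Since $\dim A = g$, a cycle $\Gamma \in \CH^g(A\times A)$ is precisely a degree-$0$ self-correspondence, that is, an endomorphism of the Chow motive $\mathfrak{h}(A)$, with composition $\circ$ corresponding to composition of endomorphisms. So the assertion is exactly that a numerically trivial $f \in \End(\mathfrak{h}(A))$ is nilpotent, and I would structure the proof around this reformulation.

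The first input is the finite-dimensionality of $\mathfrak{h}(A)$. I would invoke the Deninger--Murre decomposition (after Shermenev) $\mathfrak{h}(A) = \bigoplus_{i=0}^{2g}\mathfrak{h}^i(A)$, characterized by $[n]_*$ acting as $n^i$ on $\mathfrak{h}^i(A)$, together with the structural facts that $\mathfrak{h}^1(A)$ is oddly finite-dimensional, namely $\Sym^{2g+1}\mathfrak{h}^1(A)=0$, and that each $\mathfrak{h}^i(A)$ is a direct summand of $\mathfrak{h}^1(A)^{\otimes i}$. Grouping even and odd degrees then exhibits $\mathfrak{h}(A) = M_+\oplus M_-$ with $M_+$ evenly finite-dimensional ($\bigwedge^{N+1}M_+=0$) and $M_-$ oddly finite-dimensional ($\Sym^{N'+1}M_-=0$) for suitable $N, N'$.

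Next I would extract nilpotence from finite-dimensionality. The crucial point is a tensor-categorical identity, valid in any $\Q$-linear pseudo-abelian rigid tensor category: for an evenly finite-dimensional object $M$ with $\bigwedge^{N+1}M=0$, expanding the vanishing endomorphism $\bigwedge^{N+1}f$ via the antisymmetrizing projector expresses $f^{\circ(N+1)}$ as a universal polynomial in the lower powers $f^{\circ k}$ whose coefficients are, through Newton's identities, polynomials in the traces $\operatorname{tr}(f^{\circ k})$. When $f$ is numerically trivial, so is each $f^{\circ k}$, and hence $\operatorname{tr}(f^{\circ k})=0$ for all $k\ge 1$, since the trace lies in $\End(\mathds{1})=\Q$ and is computed by the numerical realization on which $f$ acts as zero. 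Consequently all coefficients vanish and $f^{\circ(N+1)}=0$; the oddly finite-dimensional case is identical after interchanging the roles of $\bigwedge$ and $\Sym$ and inserting the appropriate signs.

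Finally, to handle a general $f\in\End(\mathfrak{h}(A))$ I would write it as a block matrix relative to $M_+\oplus M_-$; each block is numerically trivial, the diagonal blocks are nilpotent by the even and odd cases, and the mixed compositions $M_+\to M_-\to M_+$ and $M_-\to M_+\to M_-$ are numerically trivial endomorphisms of $M_+$ and $M_-$, hence nilpotent as well, so a routine block-matrix argument forces $f$ to be nilpotent. I expect the genuine obstacle to lie in the third step: establishing the symmetric-function identity that converts the relation $\bigwedge^{N+1}f=0$ into an expression for $f^{\circ(N+1)}$ purely in terms of the traces $\operatorname{tr}(f^{\circ k})$, together with the careful bookkeeping of Koszul signs when transferring the argument from $\bigwedge$ to $\Sym$. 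Everything else (the reformulation, the finite-dimensionality input, and the block reduction) is comparatively formal, whereas this identity is where Kimura's argument is truly nontrivial.
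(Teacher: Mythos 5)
The paper gives no proof of this statement; it is imported verbatim from Kimura's article, so your reconstruction has to be measured against Kimura's own argument. Most of it matches: the reformulation as nilpotence of a numerically trivial $f\in\End(\mathfrak{h}(A))$, the inputs $\Sym^{2g+1}\mathfrak{h}^1(A)=0$ and $\mathfrak{h}^i(A)\cong\bigwedge^i\mathfrak{h}^1(A)$ yielding $\mathfrak{h}(A)=M_+\oplus M_-$ with $M_+$ evenly and $M_-$ oddly finite-dimensional, and the partial-trace/Newton-identity argument showing that a numerically trivial endomorphism of a purely even (or purely odd) object with $\bigwedge^{N+1}M_+=0$ (resp.\ $\Sym^{N+1}M_-=0$) satisfies $f^{\circ(N+1)}=0$, using that $\mathrm{tr}(f^{\circ k})\in\End(\mathds{1})=\Q$ is a numerical invariant. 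That is exactly Kimura's Proposition~7.2 and is correct as you describe it.

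The gap is in your final step, and you have also misplaced the difficulty: the symmetric-function identity is the standard part, whereas the block reduction is where the genuinely nontrivial extra input enters. Knowing that $a=e_+fe_+$, $d=e_-fe_-$, $b\circ c$ and $c\circ b$ are all nilpotent does \emph{not} force $\bigl(\begin{smallmatrix}a&b\\ c&d\end{smallmatrix}\bigr)$ to be nilpotent in a noncommutative ring: take $M_\pm$ each spanned by two vectors and set $a=E_{1,2}$, $d=E_{1,2}$, $b=E_{2,1}$, $c=E_{2,1}$; then $a,d$ are nilpotent and $bc=cb=0$, yet the block matrix cyclically permutes the four basis vectors and its fourth power is the identity. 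Since $\End(\mathfrak{h}(A))$ is not a finite-dimensional algebra, no trace criterion rescues this. What actually closes the argument is the \emph{uniformity} of the bound in the pure-parity case: every element of the ideal $I_\pm$ of numerically trivial endomorphisms of $M_\pm$ satisfies $x^{\circ(N_\pm+1)}=0$ with $N_\pm$ independent of $x$, so by the Nagata--Higman theorem (a nil $\Q$-algebra of bounded exponent is nilpotent) the composition of any $m_\pm$ elements of $I_\pm$ vanishes for some fixed $m_\pm$. Expanding $f^{\circ k}$ into words $e_{\epsilon_k}fe_{\epsilon_{k-1}}f\cdots fe_{\epsilon_0}$ and grouping each word by its successive passages through $M_+$ (resp.\ $M_-$), every word with $k\geq m_++m_-$ factors through at least $m_+$ elements of $I_+$ or at least $m_-$ elements of $I_-$ and hence vanishes. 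This is Kimura's Proposition~7.5; without Nagata--Higman (or an equivalent combinatorial substitute) your ``routine block-matrix argument'' does not go through.
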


	Second, O'Sullivan \cite{o'sullivan} has recently identified a sub-algebra of
	$\CH^*(A)$ consisting of cycles that are called \emph{symmetrically
		distinguished} (see \cite[p.2]{o'sullivan} for a definition),  with the
	following property\,:
	
	\begin{thm}[O'Sullivan \cite{o'sullivan}] \label{T:Osullivan} Let $A$ be a
		complex abelian variety.
		The  symmetrically distinguished cycles in
		$\CH^*(A)$ form a graded $\Q$-sub-algebra, denoted $\DCH^*(A)$, that contains
		symmetric divisors and that is
		stable under pull-backs and push-forwards along homomorphisms of abelian
		varieties. Moreover
		the composition $$\DCH^*(A)\hookrightarrow \CH^*(A)\twoheadrightarrow
		\overline
		\CH^*(A)$$ is an isomorphism of $\Q$-algebras. Here, $ \overline
		\CH^*(A)$ denotes the Chow ring of $A$ modulo numerical equivalence. In
		particular, a symmetrically distinguished cycle that is homologically trivial
	 is rationally trivial.
	\end{thm}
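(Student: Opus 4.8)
The plan is to follow the strategy behind O'Sullivan's theorem, which rests on producing a canonical, multiplicative and functorial splitting of the surjection $\CH^*(A) \twoheadrightarrow \overline{\CH}^*(A)$. First I would recall the defining mechanism. To a homogeneous class $\alpha \in \CH^*(A)$ one associates, for each $m \geq 1$, a canonical symmetrized cycle $s_m(\alpha) \in \CH^*(A^m)$ built from the $m$-fold external product $\alpha^{\times m}$ together with the addition and diagonal maps of $A$ and the projectors onto the $\mathfrak{S}_m$-isotypic components for the permutation action on the factors; here $s_1(\alpha)$ records $\alpha$ itself. One then declares $\alpha$ \emph{symmetrically distinguished} when $s_m(\alpha)$ is numerically trivial for every $m \geq 2$. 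The point of testing on all powers $A^m$ simultaneously is that it rigidifies a cycle enough to pin it down modulo numerical equivalence.

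The three structural assertions are then essentially bookkeeping, albeit non-trivial. That $\DCH^*(A)$ is a graded $\Q$-subalgebra and is stable under $f^*$ and $f_*$ for homomorphisms $f\colon A \to B$ follows from the compatibility of $\alpha \mapsto s_m(\alpha)$ with products and with the induced maps $f^{\times m}$ on the powers. That it contains the symmetric divisors I would check via the Beauville decomposition $\CH^1(A) = \CH^1_{(0)}(A) \oplus \CH^1_{(1)}(A)$: a divisor is symmetric exactly when it lies in the $[-1]^*$-invariant summand $\CH^1_{(0)}(A) = \mathrm{NS}(A)_{\Q}$, and for such classes the higher $s_m$ can be computed explicitly and seen to vanish numerically.

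The substance of the theorem is that the composite $\DCH^*(A) \hookrightarrow \CH^*(A) \twoheadrightarrow \overline{\CH}^*(A)$ is an isomorphism, which I would split into surjectivity and injectivity. For surjectivity one must exhibit a symmetrically distinguished representative of every numerical class; starting from the symmetric divisors and their products and extending by a projection construction over the finite-dimensional algebra $\overline{\CH}^*(A)$ should produce such representatives. Injectivity is the final ``in particular'' clause, and it is the crux: a symmetrically distinguished cycle that is homologically, hence numerically, trivial must be rationally trivial.

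For injectivity the decisive tool is Kimura's nilpotence theorem (Theorem~\ref{T:Kimura}), by which a numerically trivial self-correspondence on an abelian variety is nilpotent. The main obstacle is that nilpotence by itself gives only that a cycle acts nilpotently, not that it vanishes; symmetric distinguishedness is precisely the extra input that upgrades ``nilpotent'' to ``zero''. I expect the argument to proceed by a descending induction on $m$: the numerical triviality of the $s_m(\alpha)$, inserted into the Kimura finite-dimensionality of $\mathfrak{h}(A)$, where the $\mathfrak{S}_m$-isotypic (in particular the sign- and trivial-) decompositions force the relevant symmetric powers to be simultaneously nilpotent and idempotent in a suitable sense, collapses $\alpha$ itself to $0$. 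This marriage of symmetric-group representation theory with finite-dimensionality is the genuinely hard step and is where essentially all of the technical weight sits; I would not anticipate a shortcut around it.
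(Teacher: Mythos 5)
First, a point of orientation: the paper does not prove this statement. Theorem~\ref{T:Osullivan} is quoted from O'Sullivan \cite{o'sullivan} and used as a black box, so there is no internal proof to compare yours against; what can be assessed is whether your sketch would reconstruct O'Sullivan's argument. It would not, and the gap starts with the definition. O'Sullivan does not ask that certain symmetrized classes $s_m(\alpha)$ be numerically trivial. Rather, for each $m$ he forms the subspace $V_m(\alpha)\subseteq \CH^*(A^m)$ spanned by the classes $p_*(\alpha^{r_1}\times\cdots\times\alpha^{r_n})$, where $p:A^n\to A^m$ is a closed immersion whose components are projections possibly composed with multiplication by $-1$, and he calls $\alpha$ symmetrically distinguished if the quotient map $\CH^*(A^m)\to\overline{\CH}^*(A^m)$ is \emph{injective} on $V_m(\alpha)$ for every $m$. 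With the correct definition, the clause you single out as ``the crux'' --- that a numerically (hence homologically) trivial symmetrically distinguished cycle vanishes --- is immediate: it is the case $m=n=1$, $r_1=1$ of the defining injectivity, since $\alpha\in V_1(\alpha)$. Your proposed machinery for it (Kimura nilpotence combined with $\mathfrak{S}_m$-isotypic decompositions forcing symmetric powers to be ``simultaneously nilpotent and idempotent'') is therefore aimed at the wrong target, and as written it is too vague to check: nilpotence of a numerically trivial self-correspondence does not by itself produce vanishing of a cycle, and you do not say how the symmetrization data would be brought to bear.

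The substance of the theorem sits exactly where your sketch is thinnest. One must show (i) that $\DCH^*(A)$ is closed under products, pull-backs and push-forwards --- delicate because the condition quantifies over all powers $A^m$ at once and the spaces $V_m(\alpha\cdot\beta)$, $V_m(f_*\alpha)$ are not obviously controlled by $V_m(\alpha)$ and $V_m(\beta)$ --- and (ii) that $\DCH^*(A)\to\overline{\CH}^*(A)$ is surjective, i.e.\ that \emph{every} numerical class, including exceptional algebraic classes not generated by divisors, admits a symmetrically distinguished lift. O'Sullivan obtains both by constructing a multiplicative, functorial section of $\CH^*\to\overline{\CH}^*$ via a long categorical argument resting on Kimura finite-dimensionality and the semisimplicity of the numerical quotient; ``extending by a projection construction over the finite-dimensional algebra $\overline{\CH}^*(A)$'' does not address why such a lift can be chosen compatibly with the ring structure and with all homomorphisms of abelian varieties. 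In short, the statement is correctly recorded, but the proposal neither matches the actual definition nor supplies the content of the proof.
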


	The following definition will be relevant to our work concerned with Lefschetz
	representations\,; see e.g. Lemma~\ref{lem:projlef}.
	
	\begin{defn}\label{def:R}
		For a complex abelian variety $A$, we denote $$\R^*(A) \subset \CH^*(A)$$ the
		$\Q$-sub-algebra generated by symmetric divisors and we denote
		$\overline{\R}^*(A)$ its image in $\overline \CH^*(A)$, or equivalently, since homological and numerical equivalence agree on complex abelian varieties, its image in $\HH^*(A,\Q)$ under the cycle class map. 
	\end{defn}
	By
	O'Sullivan's Theorem~\ref{T:Osullivan}, $\R^*(A)$  is a
	sub-algebra of $\DCH^*(A)$ that maps isomorphically onto $\overline{\R}^*(A)$
	via the cycle class map\footnote{That $\R^*(A)$   maps isomorphically onto
		$\overline{\R}^*(A)$ was also established independently by
		Ancona~\cite{ancona2}
		and Moonen~\cite{moonen}.}. Note that a polarization of $A$ is a symmetric
	divisor
	on $A$, and that the first Chern class of the Poincar\'e line-bundle is a
	symmetric divisor on $A\times \hat{A}$. 
	We note that by Proposition \ref{prop:R} below the cycles $\Lambda_A^i$
	of \eqref{E:Lambda} belong to $\R^*(A\times A)$.
	
	\subsection{Hodge structures and the generalized Hodge conjecture} A $\Q$-Hodge
	structure $H$ is a rational vector space of finite dimension together with a
	decomposition of $H_\C := H\otimes_\Q \C$ as a direct sum of complex linear
	subspaces $H^{p,q}$ for integers $p,q$ such that $\overline{H^{p,q}} = H^{q,p}$
	and such that the grading by $p+q$, called the weight grading, is defined over
	$\Q$. 
	The \emph{level} of a Hodge structure $H$  is defined as  $$\ell(H) :=
	\max\{|p-q| :
	H^{p,q}\neq 0 \},$$ with the convention that we declare $H=0$ to have level
	$-\infty$. A Hodge structure $H$ is said to be \emph{effective} if
	$H^{p,q} = 0$ for $p<0$.

	\begin{defn}\label{D:Hconiveau}
		Let $H$ be a rational Hodge structure of weight $k$. The \emph{Hodge coniveau
			filtration} is 
		$$\mathrm{N}_H^n H = \text{the largest Hodge sub-structure of $H$ of level
			$\leq k-2n$}.$$
		In other words, $\mathrm{N}_H^n H$  is the largest Hodge sub-structure $H'$ of
		$H$ such that  $H'\otimes \Q(n)$ is effective. Here $\Q(n)$ denotes the
		$1$-dimensional Hodge structure of weight $-2n$ and level~$0$.
	\end{defn}
	
	\begin{conj}[Grothendieck's generalized Hodge conjecture]\label{conj:GHC}
		Let $X$ be a complex smooth projective variety. If $H$ is a sub-Hodge
		structure
		of $\HH^k(X,\Q)$ of level $\leq k-2n$, \emph{i.e.} $H\subseteq
		\mathrm{N}_H^{n}\HH^k(X,\Q)$, then $H$ is supported in codimension $n$, \emph{i.e.}
		there exists a closed subscheme $Z\subseteq X$ of codimension $n$ such that
		$H$
		is mapped to zero under the restriction homomorphism $\HH^k(X,\Q) \to
		\HH^k(X\backslash Z,\Q)$.
	\end{conj}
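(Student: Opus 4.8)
This statement is Grothendieck's generalized Hodge conjecture, which is open in full generality; I would therefore not attempt an unconditional proof, but rather isolate the cases that are genuinely accessible and indicate the strategy. The case $n=0$ is vacuous (take $Z=\emptyset$), so the content lies entirely in passing from the \emph{Hodge-theoretic} hypothesis that $H$ has level $\leq k-2n$ to the \emph{geometric} conclusion that $H$ dies on the complement of a codimension-$n$ closed subscheme. Note that the latter amounts to the existence of a smooth projective $\tilde Z$ of dimension $\leq \dim X - n$ together with a morphism $\tilde Z \to X$ such that $H$ lies in the image of the Gysin map $\HH^{k-2n}(\tilde Z,\Q)(-n) \to \HH^k(X,\Q)$, since the kernel of restriction to the complement of $Z$ is exactly the image of cohomology with support, which by purity is the Gysin image from a desingularization of $Z$. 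Thus one is really asking that the drop in level be \emph{realized by algebraic geometry} and not merely by Hodge theory.

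The natural plan is motivic. I would try to produce an algebraic self-correspondence $p \in \CH^{\dim X}(X\times X)$ that induces in cohomology a projector with image exactly $H$ and that is \emph{supported} on $Z\times X$ for some codimension-$n$ subvariety $Z$; this is precisely the strong form recorded as Conjecture~\ref{conj:strongGHC}, whose cohomological part immediately yields the desired support. Granting the standard conjectures, such a $p$ can be manufactured once one knows that the sub-Hodge structure $H(-n)$ is effective \emph{and} cut out by an algebraic cycle. The essential difficulty, and the reason the general conjecture is out of reach, is exactly this last point: producing the algebraic projector $p$ from the purely Hodge-theoretic input is a Hodge-conjecture-type problem for which no general mechanism is known.

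Where the method does succeed is for powers of abelian varieties of suitable type, and this is the route I would pursue concretely. For $X = A^m$ the Hodge group acts on $\HH^*(A^m,\Q)$, and for a very general $A$ it coincides with the Lefschetz group, so that every sub-Hodge structure is a Lefschetz sub-representation; this is what underlies Hazama's Theorem~\ref{T:gen2}. I would then establish the strong form of the conjecture directly for such Lefschetz sub-representations, the point being that the relevant projector is algebraic because it can be assembled from symmetric-divisor classes in $\R^*(A^m)$ (Proposition~\ref{prop:R} and Proposition~\ref{prop:projlef}); for abelian surfaces the missing input is supplied by Abdulali's Theorem~\ref{thm:GHC}. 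Finally, the passage from cohomology to support, and in the applications further to Chow groups, is effected by Kimura finite-dimensionality (Theorem~\ref{T:Kimura}) or by O'Sullivan's Theorem~\ref{T:Osullivan}. The hard part remains confined to type~IV and CM abelian varieties, where the Lefschetz group is strictly smaller than the Hodge group and the representation-theoretic construction of $p$ breaks down (cf.\ Remark~\ref{rmk:limit}).
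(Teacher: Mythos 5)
This statement is a conjecture, not a theorem, and the paper offers no proof of it; you are right to recognize that it is open in general and to decline an unconditional argument. Your outline of the cases that are actually accessible --- reformulating the support condition via the strong form (Conjecture~\ref{conj:strongGHC}), reducing to Lefschetz sub-representations for powers of abelian varieties of totally real type, invoking Abdulali for CM abelian surfaces, and lifting to Chow groups via Kimura finite-dimensionality or O'Sullivan --- is exactly the strategy the paper carries out in Propositions~\ref{P:niveau} and~\ref{prop:projlef} and Theorems~\ref{thm:lefhodge} and~\ref{thm:GHC}.
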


	Combining the above with the standard conjectures, a theorem of Yves Andr\'e \cite{andreIHES} on motivated cycles allows us to formulate the following conjecture (see \emph{e.g.} the proof of \cite[Prop.~4.1]{ACMV}).
	
	\begin{conj}[Strong form of the generalized Hodge conjecture]\label{conj:strongGHC}
	Let $X$ be a complex smooth projective variety of dimension $d$. If $H$ is a sub-Hodge
structure
of $\HH^k(X,\Q)$ of level $\leq k-2n$, \emph{i.e.} $H\subseteq
\mathrm{N}_H^{n}\HH^k(X,\Q)$, then 
there exists a closed subscheme $Z\subseteq X$ of codimension $n$ and a correspondence  $p\in \CH^d(X\times X)$ supported on $Z\times X$ such that $p^* : \HH^*(X,\Q) \to \HH^*(X,\Q)$ is an idempotent with image $H$.
	\end{conj}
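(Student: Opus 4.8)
The plan is to deduce the statement from the generalized Hodge conjecture (Conjecture~\ref{conj:GHC}) together with the standard conjectures, using Andr\'e's theory of motivated cycles~\cite{andreIHES}. Two things must be produced: the \emph{algebraicity} of an idempotent with image $H$, and its \emph{support} on $Z\times X$. The key observation making algebraicity available for free is that the generalized Hodge conjecture, applied to products, already implies the usual Hodge conjecture; the standard conjectures then supply the semisimple Tannakian framework in which the support condition can be arranged.

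First I would record the support of $H$. By Conjecture~\ref{conj:GHC} applied to $X$, the hypothesis $H\subseteq \mathrm{N}_H^n\HH^k(X,\Q)$ yields a closed subscheme $Z\subseteq X$ of codimension $n$ off which $H$ restricts to zero. Choosing a desingularization $\iota\colon \tilde Z\to Z\hookrightarrow X$ with $\dim\tilde Z=d-n$, a standard argument identifies this with the inclusion $H\subseteq \mathrm{Im}\big(\iota_*\colon \HH^{k-2n}(\tilde Z,\Q)(-n)\to \HH^k(X,\Q)\big)$, the Gysin map $\iota_*$ being induced by the algebraic correspondence $\Gamma_\iota\in\CH^{*}(\tilde Z\times X)$, whose image under the projection to $X$ is $Z$.

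Next I would establish algebraicity. Fixing a polarization on $\HH^k(X,\Q)$, the orthogonal projector $\pi_H$ onto the sub-Hodge structure $H$, extended by zero in the remaining degrees, is a morphism of Hodge structures, hence a Hodge class in $\HH^{2d}(X\times X,\Q)(d)$. Here one uses the (perhaps under-advertised) fact that Conjecture~\ref{conj:GHC} implies the Hodge conjecture: the space of degree-$2p$ Hodge classes on any smooth projective variety has level $0$, hence coniveau $\geq p$, so by Conjecture~\ref{conj:GHC} it is the image under a Gysin map of $\HH^0(\tilde W,\Q)(-p)$ for a codimension-$p$ subvariety, i.e.\ a span of fundamental classes, which are algebraic. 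Applying this to $X\times X$ shows that $\pi_H$ is the class of a cycle $p_0\in\CH^d(X\times X)$, so that $p_0^*$ is already an idempotent with image $H$, albeit without the support property.

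The main obstacle is to upgrade $p_0$ to a cycle $p$ supported on $Z\times X$ while keeping the image equal to $H$, and this is where Andr\'e's theorem and the standard conjectures enter. Assuming the standard conjectures (for $\tilde Z\times X$, as in the Introduction), the category of motives modulo homological equivalence is semisimple Tannakian, numerical and homological equivalence coincide, motivated cycles are algebraic, and the Hodge realization is full (again because the generalized Hodge conjecture implies the Hodge conjecture). In this category $\iota_*$ realizes a morphism $\mathfrak{h}^{k-2n}(\tilde Z)(-n)\to \mathfrak{h}^k(X)$ whose image is a direct summand $\mathfrak{N}$ realizing $\mathrm{Im}(\iota_*)\supseteq H$, cut out by an idempotent built from $\Gamma_\iota$ and its transpose that, the image of $\iota$ being $Z$, can be taken supported on $Z\times X$. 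By fullness the sub-Hodge structure $H$ is the realization of a sub-motive $\mathfrak{H}\subseteq \mathfrak{N}$; splitting it off by semisimplicity and composing with the idempotent onto $\mathfrak{N}$, I would obtain an algebraic idempotent $p$ with $p^*$ of image $H$. The delicate point throughout is the simultaneous bookkeeping of \emph{image} (exactly $H$, via fullness and semisimplicity) and of \emph{support} (on $Z\times X$, via the geometry of $\iota$); verifying that the splitting idempotent can be taken to factor through the supported summand $\mathfrak{N}$, and hence to remain supported on $Z\times X$, is the step I expect to require the most care.
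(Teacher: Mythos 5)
This statement is a \emph{conjecture}: the paper gives no proof of it, only the remark (in the Introduction and immediately after the statement, with a pointer to the proof of \cite[Prop.~4.1]{ACMV}) that it follows from Conjecture~\ref{conj:GHC} for all smooth projective varieties together with the standard conjectures via Andr\'e's theorem, so any ``proof'' can only be such a conditional derivation. Your argument --- Conjecture~\ref{conj:GHC} to produce $Z$ and the inclusion $H\subseteq \mathrm{Im}(\iota_*)$, the implication GHC $\Rightarrow$ Hodge conjecture $\Rightarrow$ standard conjectures to get algebraicity and semisimplicity, and the splitting of $H$ off $\mathrm{Im}(\iota_*)$ through an idempotent built from ${}^t\Gamma_\iota$ (whose support $Z\times \tilde Z$ is what forces $p$ to be supported on $Z\times X$) --- is precisely the route the paper has in mind, so the proposal matches the paper's justification.
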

	
	We note that the standard conjectures are implied by the (generalized) Hodge conjecture. Therefore, the generalized Hodge conjecture \ref{conj:GHC} for all complex smooth projective varieties implies the validity of Conjecture \ref{conj:strongGHC}. In particular, Conjectures \ref{conj:GHC} and \ref{conj:strongGHC} are equivalent when considered \emph{for all} complex smooth projective varieties.
	This stronger formulation of the generalized Hodge conjecture will be crucial to our main results\,; see Propositions~\ref{P:niveau} and~\ref{prop:projlef}.

	\section{Generically defined cycles}

	\subsection{Generically defined cycles on self-products of abelian varieties}
	A fundamental result of Grothendieck and Mumford \cite[Theorem 7.9]{mumford}
	is that, for $N\geq 3$, the fine moduli scheme $\mathcal{A}_{g,d,N}$ for
	polarized abelian varieties of degree $d^2$ and dimension $g$ with level-$N$
	structure exists, and that it is moreover quasi-projective over
	$\operatorname{Spec} \Z$.

	\begin{defn}[Generically defined cycles on abelian varieties] \label{D:unidef}
		Let $m,g$ and $d$ be positive integers. A \emph{generically defined cycle on
			the $m$-fold
			power
			of a polarized complex abelian
			variety} $A$ of degree $d^2$ and dimension $g$ is a cycle in $\CH^*(A^m)$ that
		is the
		restriction, for some integer $N\geq 3$, of a cycle on the $m$-fold power of
		the
		universal polarized abelian
		variety of degree $d^2$ and dimension $g$ with level-$N$ structure.
	\end{defn}
	
	For the sake of this paper we only consider cycles with rational coefficients,
	but of course the definition of generically defined cycles on abelian varieties
	makes sense for Chow groups with integral coefficients. However, with rational coefficients, the definition is independent of the choice of a level structure\,:

	\begin{rmk} \label{rmk:N}	
		By considering the natural finite \'etale morphism $\mathcal{A}_{g,d,M} \to
		\mathcal{A}_{g,d,N}$ for integers $M,N\geq 3$ such that $N$ divides $M$,
		we see that generically defined cycles on the $m$-fold power of a polarized
		complex abelian variety $A$ are in fact  the
		restriction, \emph{for all} integers $N\geq 3$, of a cycle on the $m$-fold
		power of the
		universal polarized abelian
		variety of degree $d^2$ and dimension $g$ with level-$N$ structure. In
		particular, generically defined cycles on the $m$-fold power of a polarized
		complex abelian variety $A$ form a $\Q$-sub-algebra of $\CH^*(A^m)$. 
	\end{rmk}

	\begin{rmk}[Universally defined cycles on abelian varieties]\label{R:some}
		In our applications, the generically defined cycles that we are going to
		consider will actually satisfy the following stronger condition.
		Let $m$ and $g$ be nonnegative integers. 
		A \emph{universally defined cycle} on the $m$-fold power of polarized abelian
		varieties of dimension $g$ consists,  for every polarized abelian scheme
		$\mathcal{A} \to B$ of relative dimension $g$ over a smooth quasi-projective
		complex variety $B$, of a
		cycle $z_\mathcal{A} \in \CH^*(\mathcal{A}^{m}_{/B})$ such that for every
		morphism $f:B'\to B$ of smooth quasi-projective complex varieties
		$z_{\mathcal{A}}$ restricts to $z_{\mathcal{A}\times_B B'}$ under the natural
		morphism $(\mathcal{A}\times_B B')^{m}_{/B'} \to \mathcal{A}^{m}_{/B}$. Here
		the
		abelian scheme $\mathcal{A}\times_B B' \to B'$ is understood to be equipped
		with
		the polarization induced by that of $\mathcal{A}$.
	\end{rmk}

	\begin{rmk}\label{R:uni}
		It is clear that, when restricted to the $mn$-fold powers of polarized abelian
		varieties
		of dimension $g$,  a cycle that is generically defined for $m$-fold powers of
		polarized
		abelian varieties of dimension $ng$ is generically defined for $mn$-fold
		powers of polarized abelian varieties of dimension $g$.
	\end{rmk}
	
	\begin{ex}\label{Ex:universal}
		The polarization of a polarized abelian variety is generically defined. 
		Likewise,  the
		first Chern class of
		the Poincar\'e line-bundle  (see \S \ref{S:def}) and the 
		correspondences $\Lambda_A^i$ of \eqref{E:Lambda} are generically defined on
		2-fold products of polarized abelian varieties. 
	\end{ex}

	For future use, let us give the following examples of generically defined
	self-correspondences on abelian varieties\,:
	
	\begin{lem}\label{L:kleiman}Suppose that $(A,L)$ is a polarized complex abelian
		variety
		of dimension $g$. Then there exist, for all integers $k$ and $n$, idempotent
		correspondences $p^{k,n} \in \DCH^g(A\times A)$ that are  generically defined
		for
		$2$-fold products of abelian varieties, and whose action in cohomology are the
		orthogonal projectors $$p^{k,n} : \HH^*(A,\Q) \to
		L^n\HH^{k-2n}(A,\Q)_{\mathrm{prim}} \to \HH^*(A,\Q).$$ 
		In particular, the Chow--K\"unneth projectors $\pi_A^k := \sum_n p^{k,n}$ are
		generically defined.
	\end{lem}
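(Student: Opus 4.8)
The plan is to lift the two Lefschetz operators to generically defined, symmetrically distinguished correspondences, to express the desired orthogonal projectors as universal polynomials in these two operators, and then to upgrade the resulting cohomological identities to rational equivalence in one stroke by invoking O'Sullivan's Theorem~\ref{T:Osullivan}.

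First I would record that both the Lefschetz operator and its dual lift to correspondences with all three required properties. The raising operator $L$ (cup-product with the polarization) is the correspondence $\iota_{\Delta,*}L \in \CH^{g+1}(A\times A)$; since $L$ is a symmetric divisor and $\iota_\Delta \colon A \to A\times A$ is a homomorphism of abelian varieties, Theorem~\ref{T:Osullivan} gives $\iota_{\Delta,*}L \in \DCH^{g+1}(A\times A)$, and it is generically defined by Example~\ref{Ex:universal}. The lowering operator $\Lambda$ (the dual Lefschetz operator of the hard Lefschetz $\mathfrak{sl}_2$-action) is algebraic on an abelian variety: the correspondences $\Lambda_A^i$ of~\eqref{E:Lambda} invert the hard Lefschetz isomorphisms, and from them together with $\iota_{\Delta,*}L$ one assembles a correspondence $\lambda \in \CH^{g-1}(A\times A)$ acting as $\Lambda$. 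As $\lambda$ is built from $\iota_{\Delta,*}L$, from the Fourier--Mukai correspondences $\mathcal{F},\hat{\mathcal{F}}$ attached to the Poincaré bundle, and from $\iota_{\Delta,*}\hat L^{\,\bullet}$ --- all generically defined (Example~\ref{Ex:universal}) and symmetrically distinguished (the Poincaré class and $\hat L$ being symmetric divisors) --- the correspondence $\lambda$ is itself generically defined and lies in $\DCH^{g-1}(A\times A)$. Here I use that both classes of cycles are stable under composition of correspondences: the generically defined ones by functoriality of the construction on powers of the universal abelian scheme (Remarks~\ref{rmk:N} and~\ref{R:uni}), the symmetrically distinguished ones because composition is built from pullbacks, products and pushforwards along homomorphisms, all of which preserve $\DCH$ by Theorem~\ref{T:Osullivan}.

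Next I would invoke the hard Lefschetz theorem for $A$, which endows $\HH^*(A,\Q)$ with the $\mathfrak{sl}_2$-action generated by $L$ and $\Lambda$ (the semisimple element acting on $\HH^j(A,\Q)$ by $j-g$) and yields the Lefschetz decomposition $\HH^k(A,\Q) = \bigoplus_n L^n\HH^{k-2n}(A,\Q)_{\mathrm{prim}}$. The orthogonal projector onto the summand $L^n\HH^{k-2n}(A,\Q)_{\mathrm{prim}}$ is a universal polynomial $P^{k,n}(L,\Lambda)$, depending only on $g,k,n$, each of whose monomials involves equally many factors $L$ and $\Lambda$ and which therefore has correspondence codimension $g$; this is the classical computation of the primitive projectors in $\mathfrak{sl}_2$-representation theory (cf.\ Kleiman). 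I then set $p^{k,n} := P^{k,n}(\iota_{\Delta,*}L,\lambda) \in \CH^g(A\times A)$. By the stability statements above, $p^{k,n}$ is generically defined and symmetrically distinguished, and by construction its action on $\HH^*(A,\Q)$ is the prescribed orthogonal projector.

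Finally I would deduce the integrality relations from O'Sullivan. For each pair $(k,n),(k',n')$ the cycle $p^{k,n}\circ p^{k',n'} - \delta_{kk'}\delta_{nn'}\,p^{k,n}$ is symmetrically distinguished and acts as $0$ in cohomology; since homological and numerical equivalence coincide on abelian varieties, it is numerically trivial, whence rationally trivial by Theorem~\ref{T:Osullivan}. Thus the $p^{k,n}$ are genuine, mutually orthogonal idempotents in $\DCH^g(A\times A)$, and $\pi_A^k := \sum_n p^{k,n}$ is a finite sum of such, hence again generically defined and symmetrically distinguished, acting as the projector onto $\HH^k(A,\Q)$. The one genuinely substantive point is the construction and cohomological control of the lift $\lambda$ of the dual Lefschetz operator together with the polynomial formula $P^{k,n}$; once the building blocks of~\eqref{E:Lambda} are in hand this reduces to bookkeeping in $\mathfrak{sl}_2$-theory, and the passage from cohomological to rational identities is handled cleanly by O'Sullivan's theorem --- which is precisely why symmetric-distinguishedness, rather than mere finite-dimensionality, is the decisive ingredient for obtaining \emph{honest} (as opposed to merely up-to-nilpotents) idempotents.
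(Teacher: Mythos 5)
Your proposal is correct and follows essentially the same route as the paper: the paper likewise cites Kleiman's computation (his Proposition~1.4.4) expressing the orthogonal projectors $p^{k,n}$ as elements of the sub-algebra of $\CH^*(A\times A)$ generated by the correspondences $\Lambda_A^i$ of~\eqref{E:Lambda}, notes that these generators are generically defined (Example~\ref{Ex:universal}) and symmetrically distinguished, and concludes idempotency via O'Sullivan's Theorem~\ref{T:Osullivan}. You merely unwind the $\mathfrak{sl}_2$-theoretic content of Kleiman's result explicitly; there is no substantive difference.
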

	\begin{proof}
		Kleiman \cite[Proposition 2.3]{kleiman} showed that the orthogonal projectors
		$p^{k,n}$ are algebraic for all smooth projective complex varieties that
		satisfy
		Grothendieck's Lefschetz standard conjecture. In fact, given a polarized
		abelian
		variety $(A,L)$ it is shown in \cite[Proposition 1.4.4]{kleiman} that the
		projectors $p^{k,n}$ are the classes of cycles (denoted abusively also
		$p^{k,n}$) that  belong to the sub-algebra of $\CH^*(A\times A)$ generated by
		the $\Lambda_A^i$ for $0\leq i \leq 2g$ (see \eqref{E:Lambda}). Since the
		cycles $\Lambda^i_A$ are
		generically defined for $2$-fold products of abelian varieties (Example
		\ref{Ex:universal}), so are the cycles $p^{k,n}$. Finally, note that the
		cycles
		$\Lambda_A^i$ belong to $\DCH^*(A\times A)$ by O'Sullivan's
		Theorem~\ref{T:Osullivan} so
		that the cycles $p^{k,n}$ belong to  $\DCH^{g}(A\times A)$\,; these are
		idempotents by O'Sullivan's Theorem.
	\end{proof}
	\begin{rmk}\label{R:symm}
		The cycles $p^{k,n}$ can be defined explicitly in terms of the $\Lambda_A^i$
		by
		carrying cohomological computations similar to \cite[Proposition
		1]{beauvilleFourier} or \cite[Proposition 7.3]{sv} (note that in
		\cite[Proposition 7.3]{sv} there is a sign error\,: $(-1)^i$ should read
		$(-1)^{i+g}$). Moreover, since the Chow--K\"unneth projectors $\pi_A^k := \sum_n
		p^{k,n}$ of Lemma~\ref{L:kleiman} are symmetrically distinguished, they coincide with the
		ones
		of Deninger--Murre \cite{dm}. In particular, writing $\mathfrak{h}^k(A)$ for
		the
		direct summand of the Chow motive $\mathfrak{h}(A)$ corresponding to the
		Chow--K\"unneth projector $\pi_A^k$, we have the Beauville
		decomposition~\cite{beauville}\,:
		\begin{equation}\label{E:beauville}
			\CH^i(A)_{(j)} :=
			\CH^i(\mathfrak{h}^{i-2j}(A)) = \{a \in \CH^i(A) : [n]^*a = n^{i-2j}a \ \text{for all}\ n\in \Z \}.
		\end{equation}
		Here, $[n] : A \to A$ is the multiplication-by-$n$ homomorphism.
	\end{rmk}
	
	\begin{ques}[generically defined cycles and symmetrically distinguished
		cycles]\label{R:gdsd}
		It is tempting to ask whether generically defined cycles on powers of
		abelian
		varieties are symmetrically distinguished in the sense of O'Sullivan
		\cite{o'sullivan}, in particular whether generically defined cycles are invariant under the multiplication by $-1$ homomorphism. (All the explicit cycles that we consider that are
		generically defined are also symmetrically distinguished.) Since the
		$\Q$-sub-algebra
		of $\CH^*(A^m)$ consisting of
		symmetrically distinguished cycles injects in cohomology, and since Hodge
		classes on $A^m$ consist of polynomials in $p_i^*L$ and
		$p_{i,j}^*c_1(\mathcal{P_A})$ for $A$ very general (see Theorem~\ref{T:gen2}),
		this would imply that generically defined cycles on $m$-fold powers of abelian
		varieties
		are polynomials in $p_i^*L$ and $p_{i,j}^*c_1(\mathcal{P_A})$\,; see also
		Proposition~\ref{prop:R}(a) below. 
		This would
		constitute a generalization (with rational coefficients) of the Franchetta
		conjecture for abelian varieties\,; see the recent \cite{fp} where it is shown in particular
		that a generically defined cycle (with rational coefficients) of codimension
		$1$
		on polarized abelian varieties is a rational multiple of the polarization.  
		
		Given the fact that a general complex principally polarized abelian threefold is isomorphic to the Jacobian of a smooth projective curve of genus, one could be led to think that the Ceresa cycle (which for a very general such abelian threefold is not symmetrically distinguished) provides a  generically defined cycle for principally polarized threefolds. This is however not the case. Indeed, the morphism $\mathcal{C}_{3,N} \to \mathcal{A}_{3,N}$ from the moduli space of genus 3 curves with level $N$ structure to the moduli space of principally polarized abelian threefolds with level $N$ structure ($N\geq 3$) is a degree 2 morphism, due to the fact that a general curve of genus 3 has no non-trivial automorphism whereas an abelian variety always admits an involution. Since the Ceresa cycle is sent to minus itself under the multiplication by $-1$ homomorphism, we see that the Ceresa cycle is in fact fiberwise zero over $\mathcal{A}_{3,N}$. We refer to \cite{nori} for more details.
	\end{ques}
	
	\begin{ques}[generically defined cycles on hyperK\"ahler varieties]
		It is also tempting to ask whether the sub-ring of the Chow ring
		consisting
		of generically defined cycles on polarized hyperK\"ahler
		varieties of a fixed deformation type injects into cohomology\,; see~\cite{flv}
		for precise statements and some evidence. Note that contrary to the case of abelian varieties, we do
		not expect generically defined cycles to be sums of intersections of
		divisors or even Chern classes\,;
		for instance,
		for
		hyperK\"ahler varieties that are deformations of $\mathrm{Hilb}^n(K3)$, the
		Beauville--Bogomolov--Fujiki
		class defines a generically defined Hodge class on the $2$-fold product, and
		we
		expect the existence of a generically defined cycle $L$ in $2$-fold powers of
		such varieties whose cohomology class is the Beauville--Bogomolov--Fujiki
		class\,; see \cite{sv}. 
	\end{ques}

	\subsection{The generalized Hodge conjecture for very general abelian
		varieties} We recall the well-known fact that for a very general abelian
	variety the Hodge coniveau filtration coincides with the primitive filtration.

	\begin{defn} Let $(X,L)$ be a smooth projective complex variety of dimension
		$d$, equipped with a polarization $L$.
		The \emph{primitive filtration} (with respect to $L$) is 
		$$\mathrm{P}^j\HH^k(X,\Q) = \bigoplus_{r\geq j} L^r
		\HH^{k-2r}(X,\Q)_{\mathrm{prim}},$$
		where $\HH^{i}(X,\Q)_{\mathrm{prim}} = \ker \left( L^{d-i+1} : \HH^i(X,\Q) \to
		\HH^{2d-i+2}(X,\Q) \right)$ for $i\leq d$, and is $0$ for $i>d$.
	\end{defn}
	Note that when $A$ is a very general abelian variety, there is up to scalar
	only
	one symmetric ample divisor on $A$. In particular, in this case, the primitive
	filtration
	does
	not depend on the choice of a polarization. 
	The following theorem is folklore.
	\begin{thm}[Generalized Hodge conjecture for very general abelian
		varieties]\label{T:gen}
		Let $A$ be a very general polarized complex abelian variety. Then 
		$$ \mathrm{P}^\ast \HH^k(A,\Q) = \mathrm{N}_H^\ast \HH^k(A,\Q) $$
		for all $k \geq 0$.
	\end{thm}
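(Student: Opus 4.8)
The plan is to establish the two inclusions of filtrations $\mathrm{P}^n\HH^k(A,\Q)\subseteq \mathrm{N}_H^n\HH^k(A,\Q)$ and $\mathrm{N}_H^n\HH^k(A,\Q)\subseteq \mathrm{P}^n\HH^k(A,\Q)$ separately on each $\HH^k(A,\Q)$: the first inclusion is formal and holds for any abelian variety, whereas the second is where very generality enters, through the largeness of the Hodge group.

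First I would record the numerics of the Lefschetz decomposition $\HH^k(A,\Q)=\bigoplus_{r\ge 0}L^r\HH^{k-2r}(A,\Q)_{\mathrm{prim}}$, whose summands are sub-Hodge structures. Since $L$ carries $H^{p,q}$ to $H^{p+1,q+1}$, each summand $L^r\HH^{k-2r}_{\mathrm{prim}}$ has the same level as $\HH^{k-2r}_{\mathrm{prim}}$, namely $k-2r$: indeed $\HH^{i,0}(A)=\bigwedge^i\HH^{1,0}(A)\neq 0$ for $0\le i\le g$, and it lies in the primitive part because $L^{g-i+1}$ sends it into $H^{g+1,g-i+1}=0$ for degree reasons, so $\HH^i_{\mathrm{prim}}$ has level exactly $i$. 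Consequently $\mathrm{P}^n\HH^k=\bigoplus_{r\ge n}L^r\HH^{k-2r}_{\mathrm{prim}}$ is a sub-Hodge structure all of whose summands have level $\le k-2n$, hence of level $\le k-2n$, and therefore $\mathrm{P}^n\HH^k\subseteq \mathrm{N}_H^n\HH^k$ by the maximality defining $\mathrm{N}_H^n$. This step uses nothing about very generality.

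For the reverse inclusion I would invoke the classical fact that for a very general complex abelian variety the Hodge (special Mumford--Tate) group of $A$ equals the full symplectic group $\mathrm{Sp}(\HH^1(A,\Q),\langle\,,\rangle)$, so that a sub-Hodge structure of the pure weight-$k$ structure $\HH^k$ is precisely an $\mathrm{Sp}$-subrepresentation. Writing $V=\HH^1(A,\Q)$, one has $\HH^k(A,\Q)=\bigwedge^k V$, and the primitive summand $\HH^{k-2r}_{\mathrm{prim}}$ is the irreducible $\mathrm{Sp}$-representation $\bigwedge^{k-2r}_0 V$ attached to the fundamental weight $\omega_{k-2r}$; as $L$ is $\mathrm{Sp}$-equivariant, $L^r\HH^{k-2r}_{\mathrm{prim}}\cong\bigwedge^{k-2r}_0 V$ as $\mathrm{Sp}$-modules. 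The crucial observation is that, as $r$ ranges over the summands of a single $\HH^k$, the integers $k-2r$ are pairwise distinct, so the Lefschetz summands are pairwise non-isomorphic irreducible $\mathrm{Sp}$-representations. By Schur's lemma every sub-Hodge structure $H\subseteq\HH^k$ is then the direct sum of the summands it contains, i.e. $H=\bigoplus_{r\in S}L^r\HH^{k-2r}_{\mathrm{prim}}$ for some subset $S$.

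It then remains to read off the level condition: if $H$ has level $\le k-2n$, then every $r\in S$ satisfies $k-2r\le k-2n$, i.e. $r\ge n$, so $H\subseteq \mathrm{P}^n\HH^k$; applying this to $H=\mathrm{N}_H^n\HH^k$ gives the missing inclusion and hence equality. The only place very generality is used is the identification of the Hodge group with $\mathrm{Sp}$, combined with the elementary but essential representation-theoretic fact that the primitive pieces occurring in one $\bigwedge^k V$ are pairwise non-isomorphic irreducibles; everything else is bookkeeping of the Lefschetz decomposition. The main point requiring a little care is the case $k>g$, where one reindexes the primitive decomposition via hard Lefschetz, but the same dichotomy of pairwise distinct levels persists and the argument goes through unchanged.
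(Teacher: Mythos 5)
Your proposal is correct and follows essentially the same route as the paper, whose proof consists of citing the coincidence of the Hodge group with $\operatorname{Sp}(\HH^1(A,\Q))$ and deferring the "representation-theoretic argument" to Hain \cite[Prop.~4.4]{friedlander} and \cite[p.~135]{hazama}; what you have written is precisely that argument spelled out (multiplicity-one Lefschetz decomposition into pairwise non-isomorphic irreducibles of level $k-2r$, plus Schur's lemma).
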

	\begin{proof}
		Since $A$ is very general, its Hodge group is dense in the
		symplectic group $\operatorname{Sp}(\HH^1(A,\Q))$. The proof thus reduces to a
		representation-theoretic argument. We refer to Hain's argument in \cite[Prop.
		4.4]{friedlander}, or to \cite[p. 135]{hazama}.
	\end{proof}
	
	\subsection{The generalized Hodge conjecture for self-powers of  very general
		abelian
		varieties}\label{S:gHpowers}
	A crucial step towards the proof of Theorem~\ref{T:main} is the following
	generalization due to Hazama \cite{hazama} of Theorem~\ref{T:gen} to
	self-powers of $A$.
	
	\begin{thm}[Hazama]\label{T:gen2}
		Let $A$ be a very general polarized complex abelian variety. Then,
		denoting $\iota_\Delta : A^m \to A^m \times A^m$ the diagonal embedding, we
		have
		$$ \mathrm{N}_H^n \HH^k(A^m,\Q) = \sum_Q
		((\iota_\Delta)_*Q)_*\HH^{k-2n}(A^m,\Q)$$
		for all $m,k \geq 0$, where the sum runs through all cycles $Q \in
		\CH^{n}(A^m)$ which are products of cycles of the form $(p_{i})^*L$,
		$(p_{i,j})^*P$.
		Here
		$p_{i} : A^m\to A$ and $p_{i,j} : A^m \to A^2$ are the
		natural projections, and $P \in \CH^1(A\times A)$ is the first Chern class of
		the Poincar\'e line-bundle (see \S \ref{S:def}).
	\end{thm}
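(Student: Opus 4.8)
The plan is to reduce the statement to the representation theory of the symplectic group and to read off the coniveau information from a Lefschetz-type $\mathfrak{sl}_2$ package. Write $V=\HH^1(A,\Q)$, so that $\HH^k(A^m,\Q)=\bigwedge^k(V^{\oplus m})$. Since $A$ is very general, its Hodge group coincides with the full symplectic group $\mathrm{Sp}(V)$, and hence every sub-Hodge-structure of $\HH^k(A^m,\Q)$ is an $\mathrm{Sp}(V)$-subrepresentation. Let $\mu\colon \mathbb{G}_m\to \mathrm{Sp}(V_\C)$ be the Hodge cocharacter, acting by $t$ on $U:=\HH^{1,0}$ and by $t^{-1}$ on $\bar U:=\HH^{0,1}$; then a class of type $(p,q)$ has $\mu$-weight $p-q$, the $\mu$-weights of any $\mathrm{Sp}(V)$-representation are symmetric about $0$, and consequently the Hodge level of an $\mathrm{Sp}(V)$-subrepresentation equals its maximal $\mu$-weight. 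In these terms, $\mathrm{N}_H^n\HH^k(A^m,\Q)$ is the sum of all $\mathrm{Sp}(V)$-irreducible summands of maximal $\mu$-weight $\leq k-2n$.

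I would next identify the right-hand side. A degree-$2$ class is $\mathrm{Sp}(V)$-invariant precisely when it is a combination of the $(p_i)^*L$ and $(p_{i,j})^*P$, as these span the invariants of $\bigwedge^2(V^{\oplus m})$ (one for each self-pairing and each pair of factors). For a cycle $Q$ that is a product of such classes, the correspondence $(\iota_\Delta)_*Q$ acts on cohomology by cup product with $[Q]$, by the projection formula applied along the diagonal. Writing $L_Q$ for this operator, the right-hand side is $\sum_{|Q|=n}L_Q\,\HH^{k-2n}(A^m,\Q)$. Since each generator is of type $(1,1)$, the operator $L_Q$ raises Hodge type by $(n,n)$ and so preserves $\mu$-weight; this gives at once the inclusion $\sum_{|Q|=n}L_Q\,\HH^{k-2n}\subseteq \mathrm{N}_H^n\HH^k$, the easy half.

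For the reverse inclusion I would set up the commuting Lie algebra. Together with their adjoint contraction operators $\Lambda_{ij}$, the $L_{ij}$ generate a reductive Lie algebra $\mathfrak{g}$ acting on $\HH^*(A^m,\C)$ and commuting with $\mathrm{Sp}(V)$, namely the Howe dual of $\mathrm{Sp}(V)$ in $\bigwedge(V^{\oplus m})$. The attendant $\mathfrak{sl}_2$-theory yields a primitive decomposition $\HH^*(A^m,\C)=\bigoplus L_Q\,\mathrm{Prim}$, where $\mathrm{Prim}=\bigcap_{i,j}\ker\Lambda_{ij}$ and $L_Q$ is injective on primitive classes in the relevant range; each $L_{ij}$ and $\Lambda_{ij}$ changes $(p,q)$ by $(\pm1,\pm1)$ and hence preserves $\mu$-weight. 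The whole argument then hinges on the following claim, which carries the genuine content and is the main obstacle.

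\begin{claim}
Every $\mathrm{Sp}(V)$-irreducible primitive summand $W_0\subseteq \mathrm{Prim}^{k'}$ has Hodge coniveau $0$; that is, its maximal $\mu$-weight equals $k'$, equivalently $W_0$ has a nonzero component of type $(k',0)$.
\end{claim}

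\noindent Granting the claim, the bookkeeping closes the argument: a summand $L_Q W_0$ with $|Q|=s$ and $W_0$ primitive of weight $k-2s$ has maximal $\mu$-weight exactly $k-2s$ (as $L_Q$ preserves $\mu$-weight and is injective on $W_0$), so it lies in $\mathrm{N}_H^n\HH^k$ if and only if $s\geq n$; and any such summand with $s\geq n$ factors as $L_{Q'}\bigl(L_{Q''}W_0\bigr)$ with $|Q'|=n$, whence it lies in $\sum_{|Q'|=n}L_{Q'}\HH^{k-2n}$, yielding the desired equality. To establish the claim I would invoke the wedge Howe duality for the dual pair $(\mathrm{Sp}(V),\mathfrak{g})$ on $\bigwedge(V^{\oplus m})$: the primitive (harmonic) classes decompose multiplicity-freely, and a highest-$\mu$-weight vector for each occurring $\mathrm{Sp}(V)$-type can be located inside the pure holomorphic part $\bigwedge^{k'}(U^{\oplus m})=\HH^{k',0}(A^m)$, which is automatically primitive. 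This is precisely the step generalizing the computation behind Theorem~\ref{T:gen} (the case $m=1$) from one symplectic factor to $m$ copies, and it is where the symplectic representation theory does the real work; the remaining steps are formal.
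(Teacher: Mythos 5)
Your outline is correct and follows essentially the same route as the paper, which for this theorem cites Hazama and defers the argument to its generalization in Theorem~\ref{thm:lefhodge}: reduce to $\operatorname{Sp}(V)$-representation theory, split off the images of the insertion operators, and show that every nonzero ``primitive'' irreducible constituent in degree $k'$ has a nonzero $(k',0)$-part, so that its coniveau is exactly the number of Lefschetz operators applied. Your Claim is precisely the content of Proposition~\ref{prop:irred} together with Lemma~\ref{lem:hodgelenthrep} (using Lemma~\ref{lem:isotropic}, i.e.\ that $\HH^{1,0}$ is Lagrangian): the paper proves it by identifying such a constituent with $\mathds{S}_{\langle \lambda \rangle }V$, $|\lambda|=k'$, of highest weight $\lambda_1\varepsilon_1+\cdots+\lambda_n\varepsilon_n$, whose highest-weight vector has $H_0$-eigenvalue $\sum_i\lambda_i=k'$ and is therefore of type $(k',0)$ --- your appeal to skew Howe duality on $\bigwedge(V^{\oplus m})$ is the same computation packaged via the exterior algebra rather than via Weyl's construction on tensor powers.
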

	\begin{proof}
		This is due to Hazama \cite[Th. 5.1]{hazama}. (Note that a very general
		abelian
		variety is such that $\End^0(A) = \Q$ (hence of type I), and is such that its
		Hodge group coincides with its Lefschetz group (and hence \emph{stably
			nondegenerate} in the terminology of~\cite{hazama})). The proof is
		representation-theoretic and involves understanding the irreducible
		representations of $\mathrm{Sp}(H^1(A,\Q))$ that appear as direct summands of
		the representations $\bigwedge^{k_1} H^1(A,\Q) \otimes \cdots \otimes
		\bigwedge^{k_r} H^1(A,\Q)$ with $k_1+\cdots +k_r = k$. For a proof, we also
		refer to Theorem~\ref{thm:lefhodge}, where we will generalize Hazama's
		theorem.
	\end{proof}
	
	As a consequence, we can prove (a finer version of) Conjecture~\ref{conj:strongGHC} for powers of a very general abelian variety\,:
	
	\begin{prop}\label{P:niveau}
		Let $A$ be a very general polarized complex abelian variety of dimension $g$,
		and let
		$m$
		be an integer. Then for every integers $k$ and $n$ there exists an idempotent
		correspondence $q^{k,n} \in \CH^{gm}(A^m\times A^m)$ inducing the  projection
		$\HH^*(A^m,\Q) \to \mathrm{N}_H^n\HH^k(A^m,\Q) \to \HH^*(A^m,\Q)$, which is a
		linear combination of correspondences of the form $$\mathfrak{h}(A^m)
		\stackrel{\rho}{\longrightarrow} \mathfrak{h}(A^m)(n) 
		\stackrel{\zeta}{\longrightarrow} \mathfrak{h}(A^m),$$ where $\rho$ and
		$\zeta$
		are both symmetrically distinguished cycles and
		generically defined cycles on
		$2m$-fold powers of abelian
		varieties of dimension $g$.  Moreover,
		such a correspondence is unique modulo homological equivalence.
	\end{prop}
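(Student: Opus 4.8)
The plan is to construct the idempotent $q^{k,n}$ first in cohomology, in the prescribed factored shape, and then to lift it to rational equivalence by O'Sullivan's Theorem~\ref{T:Osullivan}. All the needed building blocks are generically defined and symmetrically distinguished: by Example~\ref{Ex:universal} the polarization $L$, the Poincar\'e class $P$, and the operators $\Lambda^i_{A^m}$ of \eqref{E:Lambda} are generically defined on powers of abelian varieties, and by Theorem~\ref{T:Osullivan} they lie in $\DCH$; the same is true of the projectors $\pi^k_{A^m}$ and $p^{k,n}_{A^m}$ of Lemma~\ref{L:kleiman}. For any product $Q\in\CH^n(A^m)$ of classes $p_i^*L$ and $p_{i,j}^*P$, the correspondence $\rho_Q:=(\iota_\Delta)_*Q\in\CH^{gm+n}(A^m\times A^m)$ defines a morphism $\mathfrak{h}(A^m)\to\mathfrak{h}(A^m)(n)$ acting on $\HH^*(A^m,\Q)$ as cup-product with $[Q]$; since $\iota_\Delta$ is a homomorphism and $Q$ is a product of symmetric divisors, $\rho_Q$ is itself symmetrically distinguished and generically defined.

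The cohomological core is Hazama's Theorem~\ref{T:gen2}, which for $A$ very general identifies $H:=\mathrm{N}_H^n\HH^k(A^m,\Q)$ with $\sum_Q [Q]\cup\HH^{k-2n}(A^m,\Q)$, the classes $[Q]$ being of Hodge type $(n,n)$. I would feed this into hard Lefschetz for $(A^m,L)$: the $\Lambda^i_{A^m}$ realize the inverse Lefschetz isomorphisms, so that the subspace $H$, generated from below by the algebraic operators $\cup[Q]$, is also the image of an idempotent factoring through $\HH^{k+2n}(A^m,\Q)$, that is, through $\mathfrak{h}(A^m)(n)$. Concretely, I would write the orthogonal projector $\bar q$ onto $H$ (for the Hodge--Riemann form attached to $L$) as a $\Q$-linear combination $\sum_i\bar\zeta_i\circ\bar\rho_i$ in which each $\rho_i$ is one of the $\rho_Q$, precomposed with a suitable $\pi^\bullet_{A^m}$, and each $\zeta_i:\mathfrak{h}(A^m)(n)\to\mathfrak{h}(A^m)$ is the Lefschetz-adjoint down-map obtained from the transpose of a correspondence $\rho_Q$ by inserting the operators $\Lambda^i_{A^m}$. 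That such a combination represents $\bar q$ amounts, through the representation theory of $\mathrm{Sp}(\HH^1(A,\Q))$ underlying Hazama's theorem, to the invertibility over $\Q$ of the Gram matrix pairing the generators $\cup[Q]$, which is furnished by the Hodge--Riemann bilinear relations. By construction all the $\rho_i,\zeta_i$ are symmetrically distinguished and generically defined on $2m$-fold powers.

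It remains to lift $\bar q$ to rational equivalence. For each $\rho_i,\zeta_i$ I would take its (unique) symmetrically distinguished representative, which exists by Theorem~\ref{T:Osullivan} applied to the abelian variety $A^m\times A^m$, and set $q^{k,n}:=\sum_i\zeta_i\circ\rho_i\in\CH^{gm}(A^m\times A^m)$. Correspondence composition and $\Q$-linear combination preserve symmetric distinction (both are assembled from pull-backs, intersection products, and push-forwards along homomorphisms of abelian varieties), so $q^{k,n}$ is symmetrically distinguished and generically defined, with cohomology class $\bar q$. Because $\DCH^*(A^m\times A^m)$ injects into $\overline\CH^*(A^m\times A^m)$, the identity $\overline{q^{k,n}\circ q^{k,n}}=\bar q\circ\bar q=\bar q=\overline{q^{k,n}}$ forces $q^{k,n}\circ q^{k,n}=q^{k,n}$: cohomological idempotency lifts automatically. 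Uniqueness modulo homological equivalence is then formal, since a degree-$0$ self-correspondence of $A^m$ is determined modulo homological equivalence by its action on $\HH^*(A^m,\Q)$, which here is prescribed to be the projection onto $H$.

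The step I expect to be the real obstacle is the cohomological one: exhibiting $\bar q$ in the factored form $\sum_i\bar\zeta_i\circ\bar\rho_i$ with \emph{both} factors generically defined, and not merely algebraic. Plain algebraicity of the orthogonal projector onto $H$ would follow softly from the semisimplicity of the category of homological (equivalently numerical) motives of abelian varieties; the extra requirement of generic definedness forces one to express $\bar q$ using only the universal operators $L$, $P$, $\Lambda^i_{A^m}$ and $p^{k,n}_{A^m}$, and to verify that the pairing of the resulting generators is nondegenerate over $\Q$. This is precisely the representation-theoretic content inherited from Hazama's Theorem~\ref{T:gen2}, and is where the bulk of the work lies.
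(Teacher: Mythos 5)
Your proposal follows the paper's proof in all essentials: Hazama's Theorem~\ref{T:gen2} to present $\mathrm{N}_H^n\HH^k(A^m,\Q)$ as the image of the generically defined, symmetrically distinguished correspondence $\Gamma=\sum_Q(\iota_\Delta)_*Q$; the operators $\Lambda^i$ and the Kleiman projectors $p^{k,n}$ of Lemma~\ref{L:kleiman} to build an algebraic adjoint; the polarization form to produce the orthogonal projector in the factored shape; and O'Sullivan's Theorem~\ref{T:Osullivan} to transport identities and idempotency from cohomology to rational equivalence. You have also correctly located where the work lies. Two steps, however, need repair. First, the Gram matrix of the generators $\cup[Q]$ is \emph{not} invertible in general: the classes $[Q]$ are linearly dependent (there are many more monomials $Q$ in $p_i^*L$ and $p_{i,j}^*P$ than the dimension of their span), so that matrix is singular and cannot be inverted over $\Q$. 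What the Hodge--Riemann relations actually furnish is that the self-adjoint endomorphism $\bigl(\Gamma\circ s\circ\Lambda_B\circ{}^t\Gamma\circ\Lambda^k_{A^m}\bigr)_*$ (where the sign operator $s=\sum_r(-1)^r p^{k-2n,r}$ is needed to make the form definite on primitive pieces) has image exactly $\mathrm{im}\,\Gamma_*$ and vanishes on its orthogonal complement; the paper then extracts the orthogonal projector as a polynomial \emph{with zero constant term} in this endomorphism via Cayley--Hamilton. This pseudo-inverse device is the correct substitute for inverting a Gram matrix, and each monomial in the resulting polynomial still factors through $\mathfrak{h}(B)(n)=\bigoplus_Q\mathfrak{h}(A^m)(n)$ as required. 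Second, your uniqueness argument presupposes that ``the projection onto $H$'' is a single well-defined linear map, but an idempotent is not determined by its image alone; the actual content of the uniqueness clause is that any two idempotent endomorphisms of the Hodge structure $\HH^k(A^m,\Q)$ with image $\mathrm{N}_H^n\HH^k(A^m,\Q)$ must coincide, which the paper deduces from the maximality of the coniveau piece (the complementary idempotent annihilates every sub-Hodge structure of level $\leq k-2n$, forcing the two idempotents to commute and hence to agree).
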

	\begin{proof}
		By Theorem~\ref{T:gen2}, we have 
		\begin{equation*}
			\mathrm{N}_H^n\HH^k(A^m,\Q) = \Gamma_*\HH^{k-2n}(B,\Q),
		\end{equation*} where $B := \coprod_Q A^m$ is the disjoint union of copies of
		$A^m$ indexed by the correspondences $Q$, and 
		$\Gamma := \sum_Q (\iota_\Delta)_*Q \in \CH^{gm+n}\left(B \times A^m  
		\right).$ Since the correspondences $Q$ are symmetrically distinguished and
		generically defined for $2m$-fold
		products of abelian varieties, the correspondence $\Gamma$ is symmetrically
		distinguished and generically
		defined for $2m$-fold products of abelian varieties. We view $\Gamma$ as a
		morphism of Chow motives $\mathfrak{h}(B)(n)  \rightarrow \mathfrak{h}(A^m)$.
		In
		the proof below, we
		are
		going to construct  idempotent correspondences $q^{k,n}$, with the
		factorization property stated in the proposition,  whose action on cohomology
		is
		the orthogonal projector on 
		$\Gamma_*\HH^{k-2n}(B,\Q)$, for all abelian varieties $A$ (the hypothesis that
		$A$ is very general is only used to compare $\Gamma_*\HH^{k-2n}(B,\Q)$  with
		$\mathrm{N}_H^n\HH^k(A^m,\Q)$\,; these coincide when $A$ is very general by
		Theorem~\ref{T:gen2}).
		
		By Lemma~\ref{L:kleiman}, the endomorphisms $p^{j,r} \in
		\mathrm{End}(\HH^*(A',\Q))$ are induced by cycles that belong to
		$\DCH^{mg}(A'\times A')$ and are generically defined on
		$2$-fold products of abelian varieties $A'$ of dimension $mg$. Restricting to
		$2m$-fold products of abelian varieties of dimension $g$, we see by Remark
		\ref{R:uni} that the $p^{j,r} \in \mathrm{End}(\HH^*(A^m,\Q))$ are in fact
		induced by generically
		defined cycles on $2m$-fold products of abelian varieties of dimension $g$.
		Hence, the endomorphisms
		$s_j:= \sum_r (-1)^rp^{j,r} \in \mathrm{End}(\mathfrak{h}(A^m))$ and the
		Chow--K\"unneth
		projectors $\pi^{j} := \sum_r p^{j,r}\in \mathrm{End}(\mathfrak{h}(A^m))$ are
		cycles that belong to $\DCH^{mg}(A^m\times A^m)$  and are generically
		defined on $2m$-fold products of abelian varieties of dimension $g$.
		
		Denote $s:= \bigoplus_Q s_{k-2n} \in \mathrm{End}(\mathfrak{h}(B))$ and
		$\Lambda_B =
		\coprod_Q \Lambda_{A^m}^{2mg+2n-k}$.
		Since the Hodge structure $\HH^{2gm-k+2n}(B,\Q)$ ($=\bigoplus_Q
		\HH^{2gm-k+2n}(A^m,\Q)$), equipped with the pairing $\alpha \otimes \phi
		\mapsto \int_B \alpha \cup (s\circ \Lambda_B)_*\phi$, is polarized, we have
		(see e.g. \cite[Lemma 1.6]{vialniveau})
		$$\mathrm{im} \left( \left( \Gamma \circ s \circ \Lambda_B \circ {}^t\Gamma
		\circ \Lambda_{A^m}^k\right)_*\right) = \mathrm{im}\left(\Gamma_*\right) =
		\mathrm{N}_H^n\HH^k(A^m,\Q) .$$ 
		Moreover the correspondence $s \circ \Lambda_B \circ {}^t\Gamma \circ
		\Lambda_{A^m}^k$ acts as
		zero on the orthogonal complement of $\Gamma_*\HH^{k-2n}(B,\Q)$.
		By the theorem of Cayley--Hamilton, we may thus express the orthogonal
		projector
		on $\Gamma_*\HH^{k-2n}(B,\Q)$ as a polynomial (with zero constant term) in the
		endomorphism $ \left( \Gamma \circ s \circ
		\Lambda_B \circ {}^t\Gamma \circ \Lambda_{A^m}^{k}\right)_* \in
		\mathrm{End}(\HH(A^m,\Q))$. This shows that the orthogonal projector
		on $\Gamma_*\HH^{k-2n}(B,\Q)$ is induced by a cycle that is a linear
		combination of cycles with the factorization property stated in the
		proposition.
		
		Finally, concerning the uniqueness of $q^{k,n}$ modulo homological
		equivalence, let us prove more generally that an endomorphism of a Hodge
		structure
		$H$ of weight $k$, with image $\mathrm{N}_H^nH$ is unique. Assume $q$ and $q'$
		are two such endomorphisms. By definition of the Hodge coniveau filtration,
		$q$
		and $\mathrm{id}_H - q'$ are mutually orthogonal projectors. Therefore $q$ and
		$q'$ commute\,; we conclude by using the elementary fact that
		two idempotent endomorphisms of a vector space coincide when they commute with
		one another and have the same image.
	\end{proof}

	\begin{rmk}[Refined Chow--K\"unneth decompositions]\label{rmk:refined}
		Without going into the details, we simply note that Proposition~\ref{P:niveau}
	shows that the refined
		Chow--K\"unneth projectors of \cite{vialniveau} can be constructed
		unconditionally for the powers of a very general abelian variety.
		Due to Proposition~\ref{prop:projlef}
	and Corollary~\ref{cor:GHCsurface} below, the same holds for self-powers of
		elliptic curves or abelian surfaces. 
		In particular, since generalized Kummer varieties are motivated by an abelian surface (see \S \ref{sec:motab}),
		they
		admit a refined Chow--K\"unneth
		decomposition in the sense of \cite{vialniveau}.
	\end{rmk}

	\subsection{Generically defined cycles and the generalized Bloch conjecture}
Our main result concerning generically defined cycles is the
	following  slight generalization of
	Theorem~\ref{T:main}\,:
	\begin{thm}\label{T:main2}
		Let $\gamma \in \CH^{*}(A^l\times A^m)$ be a generically defined cycle on
		the
		$(l+m)$-fold power of a polarized complex abelian variety $A$ of dimension
		$g$. 
		Assume
		that $\gamma^*\HH^*(A^m,\Q) \subseteq \mathrm{N}_H^n \HH^*(A^l,\Q)$
		for some (equivalently, for all) polarized complex abelian variety $A$ of
		dimension $g$ and degree $d^2$.
		We have\,:
		
		\begin{enumerate}
			\item \label{I:1} If $l=m$ and  $\gamma \in \CH^{mg}(A^m\times A^m)$, 	then
			$\gamma_*$ acts nilpotently on $\CH_r(A^m) $ for all $r<n$. In particular, if
			$\gamma$ is an idempotent correspondence, then $\gamma_*\CH_r(A^m)=0 $ for
			all
			$r<n$.
			\item \label{I:2} If $\gamma$ is a symmetrically distinguished
			correspondence,
			then $\gamma_*\CH_r(A^m)=0 $ for all $r<n$.
		\end{enumerate}
	\end{thm}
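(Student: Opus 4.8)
The plan is to leverage Proposition~\ref{P:niveau} to reduce the cohomological hypothesis on $\gamma$ to a statement about motives, and then lift that statement to rational equivalence using the finite-dimensionality results (Theorems~\ref{T:Kimura} and~\ref{T:Osullivan}). First I would reduce to the case where $A$ is very general. The point is that $\gamma$ is generically defined, so it spreads out to a cycle on the universal family over $\mathcal{A}_{g,d,N}$, and the cohomological hypothesis $\gamma^*\HH^*(A^m,\Q) \subseteq \mathrm{N}_H^n\HH^*(A^l,\Q)$ is a closed condition that holds for one fiber if and only if it holds for the very general fiber. Since the conclusion involves vanishing on Chow groups, and the specialization homomorphism on Chow groups is compatible with the generic-to-special restriction, it suffices to establish the nilpotence (resp.\ vanishing) over a very general $A$ and then specialize. (This is exactly the specialization step invoked in the sketch following Theorem~\ref{T:main}.)

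So assume $A$ is very general and, for part~\eqref{I:1}, that $l=m$ and $\gamma \in \CH^{mg}(A^m\times A^m)$. By Proposition~\ref{P:niveau}, the projector $q:=q^{k,n}$ onto $\mathrm{N}_H^n\HH^k(A^m,\Q)$ is induced by an idempotent correspondence in $\CH^{gm}(A^m\times A^m)$ that factors through $\mathfrak{h}(A^m)(n)$ via symmetrically distinguished, generically defined cycles $\rho$ and $\zeta$. Summing over $k$ one obtains an idempotent $q = \sum_k q^{k,n}$ whose image in cohomology is $\mathrm{N}_H^n\HH^*(A^m,\Q) = \bigoplus_k \mathrm{N}_H^n\HH^k(A^m,\Q)$. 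The hypothesis $\gamma^*\HH^*(A^m,\Q) \subseteq \mathrm{N}_H^n\HH^*(A^m,\Q)$ says precisely that $q$ acts as the identity on the image of $\gamma^*$; equivalently, writing the statement contravariantly, $(\mathrm{id} - q)\circ \gamma$ (or the appropriate transpose, matching the $*$/${}^*$ conventions) is homologically trivial. Then by Kimura finite-dimensionality (Theorem~\ref{T:Kimura}), this homologically trivial self-correspondence is nilpotent in $\CH^{gm}(A^m\times A^m)$. The final ingredient is that $q_*$ itself acts as zero on $\CH_r(A^m)$ for $r<n$: this is the support statement, coming from the factorization $q = \zeta\circ \rho$ through $\mathfrak{h}(A^m)(n)$, which forces $q_*\CH_r = 0$ whenever $r<n$ for the same Tate-twist/codimension reason as in the Introduction's discussion. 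Combining, $\gamma_* = (\gamma\circ q)_* + (\gamma\circ(\mathrm{id}-q))_*$, where the first term vanishes on $\CH_r$ for $r<n$ and the second is nilpotent, yields that $\gamma_*$ acts nilpotently on $\CH_r(A^m)$ for $r<n$. If $\gamma$ is moreover idempotent, nilpotence of $\gamma_*$ forces $\gamma_*\CH_r(A^m)=0$.

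For part~\eqref{I:2}, where $\gamma$ is symmetrically distinguished, the improvement is to replace Kimura's theorem by O'Sullivan's Theorem~\ref{T:Osullivan}. The correspondence $\gamma\circ(\mathrm{id}-q)$ is built from $\gamma$, $\rho$, $\zeta$, all of which are symmetrically distinguished, and composition preserves $\DCH^*$ (it is a graded subalgebra stable under the relevant operations); hence $\gamma\circ(\mathrm{id}-q) \in \DCH^*(A^m\times A^m)$ and is homologically trivial, so by the injectivity of $\DCH^*$ into $\overline{\CH}^*$ it is \emph{rationally} trivial, not merely nilpotent. Thus $\gamma = \gamma\circ q$ in $\CH^*$, and since $q_*\CH_r(A^m)=0$ for $r<n$ by the support argument above, we conclude $\gamma_*\CH_r(A^m)=0$ for all $r<n$ outright. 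The main obstacle I anticipate is bookkeeping rather than conceptual: one must carefully track the variance conventions (the hypothesis is phrased via $\gamma^*$ on cohomology while the conclusion is about $\gamma_*$ on Chow groups, so the right object to prove homologically/rationally trivial is a transpose composite), verify that the projector $q$ from Proposition~\ref{P:niveau} is genuinely symmetrically distinguished and generically defined so that all compositions stay inside $\DCH^*$, and handle the general (not very general) case by a specialization argument that is compatible with the eigenspace decomposition. The deepest inputs—the strong generalized Hodge conjecture over very general $A$ and the $\DCH^*$-injectivity—are already available as Proposition~\ref{P:niveau} and Theorem~\ref{T:Osullivan}, so no new hard analysis is required.
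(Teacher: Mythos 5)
Your proposal is correct and follows essentially the same route as the paper: Proposition~\ref{P:niveau} supplies the generically defined, symmetrically distinguished projector onto $\mathrm{N}_H^n$, the hypothesis makes $\gamma\circ(\Delta-{}^t q)$ homologically trivial, and Kimura (resp.\ O'Sullivan) converts this into nilpotence (resp.\ vanishing) up to a correspondence factoring through an $n$-fold Tate twist, which kills $\CH_r$ for $r<n$. The one point where the paper is tidier is the reduction to arbitrary $A$: rather than specializing the Chow-theoretic conclusion (nilpotence of an operator is not itself a cycle identity, so one would have to specialize the generically defined cycle $(\gamma\circ(\Delta-{}^t q))^{\circ N}=0$ rather than ``nilpotence''), the paper notes that the homological identity ${}^t\gamma=\sum_k q^{k,n}\circ{}^t\gamma$ already holds for \emph{every} fiber because all cycles involved are generically defined, and then applies Kimura/O'Sullivan fiber by fiber.
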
	
	
	\begin{proof}
		The notations are those of Proposition~\ref{P:niveau} and its proof. First
		assume that $A$ is very general.
		By assumption,  ${}^t\gamma$ has same homology class as  $ \sum_k q^{k,n}\circ
		{}^t\gamma$. In particular, after transposing the above equality, $\gamma$ is
		a linear combination of morphisms that factor
		through the morphism of homological motives ${}^t\Gamma :
		\mathfrak{h}^{\mathrm{hom}}(A^m) \to \mathfrak{h}^{\mathrm{hom}}(B)(-n) =
		\bigoplus_Q \mathfrak{h}^{\mathrm{hom}}(A^m)(-n)$. 
		Since all the cycles considered in the proof of Proposition
		\ref{P:niveau}
		are generically defined for $2m$-fold powers of abelian varieties of dimension
		$g$, the above conclusion in fact holds without assuming that the abelian 
		variety $A$ is very general.

		\eqref{I:1}	We are assuming that $\gamma$ is a self-correspondence on $A^m$ of
		degree $0$\,; \emph{i.e.}, that it is a morphism $\mathfrak{h}(A^m) \to
		\mathfrak{h}(A^m)$. By finite-dimensionality of the motive of abelian
		varieties
		\cite{kimura},
		some power of $\gamma$, say $\gamma^{\circ N}$, factors through the morphism
		of Chow motives  ${}^t\Gamma :
		\mathfrak{h}(A^m) \to \mathfrak{h}(B)(-n)$. Therefore the action of
		$\gamma^{\circ N}$
		on
		$\CH_r(A^m)$ factors through $\CH_{r-n}(B)$\,; the group $\CH_{r-n}(B)$ is
		obviously zero for $r < n$. 
		
		\eqref{I:2}	 Finally, in order to see that $\gamma_*\CH_r(A^m)=0 $ for all
		$r<n$  if $\gamma$ is assumed to be symmetrically distinguished, it suffices
		to
		note that all the cycles appearing in Proposition~\ref{P:niveau} and its proof
		are symmetrically distinguished, so that  ${}^t\gamma$ is equal to $ \sum_k
		q^{k,n}\circ
		{}^t\gamma$ modulo rational equivalence, and hence is a linear combination of
		morphisms that factor
		through the morphism of motives ${}^t\Gamma :
		\mathfrak{h}(A^m) \to \mathfrak{h}(B)(-n) =
		\bigoplus_Q \mathfrak{h}(A^m)(-n)$. 
	\end{proof}

	\section{Lefschetz representations}\label{sec:LR}
	
	\subsection{The Lefschetz group}\label{sec:lefschetz}
	In this paragraph, we fix definitions and notations as well as recall the basic properties of
	the Lefschetz group.
	A rational Hodge structure $H$ of pure weight $k$ can be described as a
	$\Q$-vector space of finite dimension with a homomorphism of
	$\mathds{R}$-groups
	$\mathrm{Res}_{\C/\mathds{R}} \mathds{G}_m \to
	\operatorname{GL}(H_{\mathds{R}})$, where $\mathrm{Res}$ denotes restriction of
	scalars \emph{\`a la} Weil.
	The \emph{Mumford--Tate group} $\mathrm{MT}(H)$ of a rational Hodge structure
	$H$ is the $\Q$-Zariski closure of the image of this homomorphism\,; it is 
	connected. If $H$ is of pure weight, we will be interested in a
	smaller group called the \emph{Hodge~group} of $H$\,: $\mathrm{Hdg}(H)$ is the
	$\Q$-Zariski closure of the image of the circle group
	$\mathrm{Ker}\left(\mathrm{Res}_{\C/\mathds{R}} \mathds{G}_m \to \mathds{G}_m
	\right) \to \operatorname{GL}(H_{\mathds{R}})$. (Concretely, a Hodge structure
	$H$ of weight $k$ determines a homomorphism	$\lambda_H : S^1 \to
	\operatorname{GL}(H_\mathds{R})$ where $S^1$ is the unit circle in the complex
	plane,
	such that $\lambda_H(z)$ acts on $H^{p,q}$ by multiplication by $z^{p-q}$, and
	conversely such a homomorphism induces an eigenspace decomposition of $H_\C$
	that satisfies the Hodge symmetry $\overline{H^{p,q}} = H^{q,p}$). The Hodge
	group is a connected  group characterized  by the property that its invariants
	in $H^m \otimes (H^\vee)^n$ are precisely the Hodge classes
	for all non-negative integers $m$ and $n$.
	Moreover the Hodge sub-structures of $H^m \otimes (H^\vee)^n$ are precisely the
	sub-representations of the Hodge group of $H$.
	The element $C := \lambda_H(i)$ is called the \emph{Weil operator}. A
	\emph{polarization} of $H$ is a morphism of Hodge structures $\phi :
	H\otimes_\Q
	H \to \Q(-k)$ such that $\phi(x,Cy)$ is symmetric and positive definite on
	$H_\mathds{R}$. When $H$ admits a polarization, then its Mumford--Tate and Hodge groups are reductive.

	Let $A$ be a complex abelian variety.
	Cup-product defines an isomorphism of graded $\Q$-algebras
	$\bigwedge^*\HH^1(A,\Q) \to \HH^*(A)$. Via the isomorphism 
	\begin{equation}\label{eq:pairing}
		\HH^2(A,\Q) \simeq \Hom (\bigwedge^2V(A), \Q(1)),
	\end{equation}
	the cohomology class of a divisor $D$ on $A$ defines a skew-symmetric pairing
	$\phi_D : V(A) \times V(A) \to \Q(1)$, where $V(A) := \HH_1(A,\Q)$. When $D$ is ample, $\phi_D$ is
	non-degenerate and defines a polarization on the $\Q$-Hodge structure $V(A)$.
	We
	let $\rho_D$ denote the involution of the $\Q$-algebra $\End_\Q(V(A))$, which
	to
	an endomorphism of $V(A)$ associates its adjoint with respect to $\phi_D$\,;
	its
	restriction to $$\End^0(A) := \End(A)\otimes_\Z \Q$$ is the \emph{Rosati
		involution} defined by $D$. 	By definition, the Hodge group $\mathrm{Hdg}(A)$
	of
	a complex abelian variety $A$ is the Hodge group attached to the polarized
	$\Q$-Hodge structure $$V(A) := \HH_1(A,\Q).$$ 
	Due to the semi-simplicity of the category of polarized
	$\Q$-Hodge structures, the Mumford--Tate group and the
	Hodge group of a polarized Hodge structure are reductive groups.
	
	\begin{defn}
		For a complex abelian variety $A$ endowed with a polarization $L$, 
		the \emph{Lefschetz group} $L(A)$ is defined 
		to be the algebraic subgroup of $\operatorname{GL}(V(A))$ such that,
		for all commutative $\Q$-algebras $R$,
		$$L(A)(R) = \{\gamma \in C(A)\otimes_\Q R : \rho_L(\gamma)\gamma = 1\}.$$
		Here $C(A)$ is the centralizer of $\End^0(A)$ in $\End_\Q (V(A))$.
		The Lefschetz group can also be viewed as the centralizer of
		$\End^0(A)$ in $\operatorname{Sp}(V(A),\phi_L)$.
	\end{defn}
	
	The Lefschetz group does not depend of the choice of a polarization\,: given
	any two ample line-bundles $\mathcal{L}$ and $\mathcal{L}'$, there is an
	element
	$\eta \in \End^0(A)$ and a positive integer $m$ such that $m\phi_{\mathcal{L}}
	=
	\phi_{\mathcal{L}'}\eta$. In what follows, the polarization will usually be
	understood from the context, and we will therefore write simply $\rho$ for the
	Rosati involution, and 
	$\phi$ for the skew-symmetric form. In
	general, we have the inclusions
	$$\mathrm{Hdg}(A) \subseteq L(A) \subseteq \operatorname{Sp}(V(A),\phi).$$
	The Lefschetz group of $A$ naturally acts on the $\Q$-vector spaces
	$V(A)^{\otimes n} \otimes (V(A)^\vee)^{\otimes m}$, and we will refer to these
	as \emph{Lefschetz representations}.
	While the Hodge group doesn't behave well with respect to products, the
	Lefschetz group enjoys the following property\,:
	\begin{lem}[Murty \cite{murty}] \label{lem:murty}
		 If $A$ is isogenous to a product
		$A_1^{m_1}\times \cdots \times A_s^{m_s}$, with the $A_i$ simple and pairwise
		non-isogenous,
		then 
		$$L(A) \cong L(A_1)\times \cdots\times L(A_s)$$ with the factor $L(A_i)$
		acting
		diagonally on $H_1(A_i,\Q)^{\oplus m_i}$ and acting as zero on
		$H_1(A_j,\Q)^{\oplus m_j}$ for $j\neq i$.
	\end{lem}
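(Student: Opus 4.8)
The plan is to reduce to the case $A = A_1^{m_1}\times\cdots\times A_s^{m_s}$ and then to analyze the centralizer $C(A)$ and the adjoint involution $\rho$ block by block along the isotypic decomposition of $V(A)$. Since $L(A)$ is an isogeny invariant and, as recalled above, is independent of the choice of polarization, I would first replace $A$ by $\prod_i A_i^{m_i}$ and equip it with a product polarization $L = \boxplus_i L_i^{\boxplus m_i}$, where $L_i$ is a polarization on $A_i$. Because the $A_i$ are simple and pairwise non-isogenous, every nonzero homomorphism between distinct factors would be an isogeny, so $\Hom^0(A_i,A_j)=0$ for $i\neq j$, whence $\End^0(A)\cong\prod_i M_{m_i}(D_i)$ with $D_i:=\End^0(A_i)$ a division algebra. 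Writing $V_i:=V(A_i)$ and $W_i:=V_i^{\oplus m_i}$, we have $V(A)=\bigoplus_i W_i$, the $W_i$ being the isotypic components cut out by the central idempotents $e_i\in\End^0(A)$.

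The first main step is the computation of $C(A)$. Any $\gamma\in C(A)$ commutes with each $e_i$ and is therefore block-diagonal for $V(A)=\bigoplus_i W_i$, so $C(A)=\prod_i C_i$ with $C_i$ the centralizer of $M_{m_i}(D_i)$ in $\End_\Q(W_i)$. Identifying $W_i=V_i\otimes_\Q\Q^{m_i}$, the factor $M_{m_i}(D_i)\cong D_i\otimes_\Q M_{m_i}(\Q)$ acts with $D_i$ on $V_i$ and with $M_{m_i}(\Q)$ on $\Q^{m_i}$; by the standard description of the centralizer of a tensor product of algebras acting on a tensor product of modules, $C_i=\End_{D_i}(V_i)$, embedded diagonally in $\End_\Q(W_i)=\End_\Q(V_i)\otimes M_{m_i}(\Q)$, i.e. acting by the same endomorphism on each of the $m_i$ summands of $W_i$ and by zero on $W_j$ for $j\neq i$. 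Since $\End_{D_i}(V_i)$ is precisely the centralizer $C(A_i)$ of $\End^0(A_i)$ in $\End_\Q(V_i)$, this gives $C(A)=\prod_i C(A_i)$ with the stated action.

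Next I would track the adjoint involution. For the product polarization the form $\phi=\phi_L$ is orthogonal for $V(A)=\bigoplus_i W_i$ and restricts on $W_i$ to the orthogonal sum of $m_i$ copies of $\phi_{L_i}$. Hence $\rho$ preserves each $C_i$, and on a diagonal element $(\gamma,\dots,\gamma)\in C_i$ it acts as the diagonal of $\gamma^{\ast_i}$, where $\ast_i$ is the adjoint on $\End_\Q(V_i)$ with respect to $\phi_{L_i}$; that is, $\rho|_{C_i}$ is exactly the Rosati involution $\rho_{L_i}$ of $A_i$ restricted to $C(A_i)$. The defining equation of the Lefschetz group therefore decouples: for every $\Q$-algebra $R$, an element $\gamma=(\gamma_i)\in\prod_i C(A_i)\otimes_\Q R$ satisfies $\rho(\gamma)\gamma=1$ if and only if $\rho_{L_i}(\gamma_i)\gamma_i=1$ for each $i$. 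This yields a functorial isomorphism $L(A)(R)\cong\prod_i L(A_i)(R)$, hence an isomorphism of algebraic groups $L(A)\cong L(A_1)\times\cdots\times L(A_s)$, with $L(A_i)\subseteq C(A_i)=C_i$ acting diagonally on $W_i$ and by zero on $W_j$ for $j\neq i$, as asserted.

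I expect the main obstacle to be the centralizer computation of the second paragraph—correctly identifying $C(A)$ as $\prod_i\End_{D_i}(V_i)$ with its diagonal action—together with the bookkeeping needed to see that the adjoint involution respects this product decomposition and restricts to the individual Rosati involutions. Choosing a product polarization at the outset, which is legitimate by the polarization-independence of $L(A)$, is what makes the orthogonality of the $W_i$ and the factorization of $\rho$ transparent; without it one would instead have to invoke the positivity of the Rosati involution to rule out $\rho$ permuting the simple factors.
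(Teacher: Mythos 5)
Your argument is correct and is essentially the standard proof of this decomposition (the centralizer of $\End^0(A)\cong\prod_i M_{m_i}(D_i)$ is block-diagonal with diagonal blocks $\End_{D_i}(V_i)$ acting diagonally on $V_i^{\oplus m_i}$, and a product polarization makes the adjoint involution decouple into the individual Rosati involutions); the paper offers no proof of its own here, citing Murty, and Murty's proof follows the same lines. The only points worth flagging are minor: the isogeny invariance of $L(A)$, which you use implicitly when passing to $\prod_i A_i^{m_i}$, deserves the one-line justification that $V(-)$, $\End^0(-)$ and the class of polarizations are isogeny invariants, and your closing observation is apt that without the product polarization one would need positivity of the Rosati involution to see that $\rho$ fixes the central idempotents rather than permuting the isotypic blocks.
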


	Recall that $\R^*(A) \subset \CH^*(A)$ denotes the $\Q$-sub-algebra generated
	by
	symmetric divisors and that it maps isomorphically onto its image
	$\overline{\R}^*(A)$  in $\HH^*(A,\Q)$ via the cycle class map by O'Sullivan's
	Theorem~\ref{T:Osullivan}. 
	The statement of the following theorem is taken from Milne \cite[Thm.~3.2]{milne} where it is proved more generally for abelian varieties defined over any algebraically closed fields, but its origin
	can be
	traced back to work of Tankeev \cite{tankeev}, Ribet~\cite{ribet},
	Murty~\cite{murty}, and Hazama \cite{hazama2}.
	
	\begin{thm}\label{thm:lefinv}
		The \emph{Lefschetz group} $L(A)$ of a complex abelian variety $A$ is such
		that
		$\overline{\R}^s(A^r) = \HH^{2s}(A^r,\Q)^{L(A)}$ inside $\HH^{2s}(A^r,\Q)$ for
		all non-negative integers~$r$ and~$s$.\qed
	\end{thm}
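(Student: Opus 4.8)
The plan is to prove the two inclusions separately, exploiting that $\HH^{2s}(A^r,\Q)^{L(A)}$ is a graded $\Q$-subalgebra of $\HH^*(A^r,\Q)$ and that $\overline{\R}^*(A^r)$ is, by definition, the subalgebra generated by the classes of symmetric divisors. It therefore suffices to show \textbf{(a)} that every symmetric divisor class on $A^r$ is fixed by $L(A)$, and, conversely, \textbf{(b)} that every $L(A)$-invariant class is a polynomial in symmetric divisor classes.

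For \textbf{(a)} I would translate a symmetric divisor $D$ on $A^r$ into the language of \eqref{eq:pairing}: writing $V := V(A^r) = V(A)^{\oplus r}$, the class of $D$ corresponds to a skew form $\phi_D : V\times V \to \Q(1)$, and its being symmetric means $\phi_D = \phi(f_D\,\cdot\,,\cdot)$ for a Rosati-symmetric endomorphism $f_D \in \End^0(A^r) = \End^0(A)\otimes_\Q \mathrm{M}_r(\Q)$, where $\phi=\phi_L$ is the polarization form. The group $L(A)$ acts on $V$ diagonally, so it lies in $\operatorname{Sp}(V,\phi)$ and centralizes $\End^0(A^r)$. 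For $\gamma \in L(A)$ one then computes
\[
\phi_D(\gamma^{-1}x,\gamma^{-1}y)=\phi(f_D\gamma^{-1}x,\gamma^{-1}y)=\phi(\gamma^{-1}f_Dx,\gamma^{-1}y)=\phi(f_Dx,y)=\phi_D(x,y),
\]
using in turn that $f_D$ commutes with $\gamma$ and that $\gamma$ preserves $\phi$ (as $\rho(\gamma)\gamma=1$). Hence $\phi_D$ is $L(A)$-invariant, and since the invariants form a subalgebra, so is all of $\overline{\R}^*(A^r)$.

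For \textbf{(b)} --- the substantive direction --- I would first reduce to the case where $A$ is isogenous to a power of a \emph{simple} abelian variety. By Murty's Lemma~\ref{lem:murty}, $L(A)$ is the product of the groups $L(A_i)$ attached to the simple isogeny factors, each acting on the corresponding isotypic summand and trivially on the others, so the computation of invariants factors accordingly and one is reduced to $L(A)=L(A_0)$ with $A_0$ simple. One then invokes the Albert classification (\S\ref{sec:albertlef}) to identify $L(A_0)$ as an explicit classical group: writing $E$ for the center of $\End^0(A_0)$, the space $V(A)$ becomes a module over $E$ (or over the relevant division algebra), and $L(A_0)$ is the corresponding symplectic or orthogonal group in types I--III and a unitary group in type IV, acting on copies of its standard representation.

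The heart of the matter, and the step I expect to be the main obstacle, is to apply the first (and second) fundamental theorem of classical invariant theory to describe the full ring of $L(A_0)$-invariants in $\HH^*(A^r,\Q)=\textstyle\bigwedge^*\big((V(A)^\vee)^{\oplus r}\big)$. The FFT asserts that for each classical type these invariants are generated by the basic bilinear invariant(s) --- the invariant symplectic, orthogonal, or hermitian form, together with the $E$-action --- all of which live in degree $2$. Tracing each such degree-$2$ invariant back through \eqref{eq:pairing}, it is precisely the class of a symmetric divisor, built from $\phi_L$ composed with a Rosati-symmetric endomorphism. Thus every invariant is a polynomial in symmetric divisor classes, giving $\HH^{2s}(A^r,\Q)^{L(A)} \subseteq \overline{\R}^s(A^r)$ and completing the proof. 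The delicate bookkeeping occurs in type IV, where one must restrict scalars from the CM field $E$ and track the Weil restriction carefully, so that the generating hermitian invariants are realized by honest divisor classes rather than by merely Hodge classes.
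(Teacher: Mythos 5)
The paper offers no proof of Theorem~\ref{thm:lefinv} at all: the statement is quoted from Milne with an immediate end-of-proof symbol, and the underlying computation is attributed to Tankeev, Ribet, Murty and Hazama. Your sketch is, in substance, a reconstruction of that classical argument (Murty's determination of $L(A)$ combined with the first fundamental theorem of invariant theory), and I see no incorrect step in it. Direction (a) is essentially complete as written: the identification $\mathrm{NS}(A^r)_\Q \simeq \{f\in\End^0(A^r):\rho(f)=f\}$ via $\phi_D=\phi(f_D\,\cdot\,,\cdot)$, together with the fact that $L(A)$ acting diagonally centralizes $\End^0(A^r)=\mathrm{M}_r(\End^0(A))$ and preserves $\phi$, gives invariance of all divisor classes (note that in cohomology every divisor class is symmetric, so $\overline{\R}^*(A^r)$ is simply the subalgebra generated by $\mathrm{NS}(A^r)_\Q$). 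In direction (b) two points are deferred and should be made explicit. First, after extending scalars to $\C$ the tensor-algebra form of the FFT for $\operatorname{Sp}$, $\operatorname{O}$, and $\operatorname{GL}$ (acting on standard $\oplus$ contragredient) shows that all invariants of $\bigwedge^{\bullet}\bigl((V(A)^\vee)^{\oplus r}\bigr)$ are polynomials in degree-$2$ invariants; one must then identify the space of invariant alternating $2$-forms on $V(A)^{\oplus r}$ with $\mathrm{NS}(A^r)_\Q$, which amounts to the double-centralizer statement $\End_{L(A)}(V(A))=\End^0(A)$ --- this, together with the descent from $\C$ to $\Q$ and the Weil-restriction bookkeeping you mention for type IV, is where the real content lies. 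Second, in type III it is essential that the Lefschetz group is taken to be the full orthogonal group and not $\operatorname{SO}$ (the paper's footnote makes exactly this point): for $\operatorname{SO}_{2n}$ the FFT produces an additional Pfaffian-type generator, yielding invariant classes that are not in the divisor algebra, and the asserted equality would fail. Your outline is consistent with the paper's convention, but a complete write-up must address this explicitly.
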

	
	Since the Hodge classes in $\HH^{2s}(A^r,\Q)$ are precisely the invariant
	classes under the action of the Hodge group $\mathrm{Hdg}(A)$, it follows from
	the Lefschetz $(1,1)$-theorem that the Hodge conjecture holds for powers of
	abelian varieties for which the inclusion $\mathrm{Hdg}(A) \subseteq L(A)$ is
	an
	equality. This is for example the case for elliptic curves, and abelian
	varieties of prime dimension\,; see \cite{tankeev} and \cite{ribet}.

	\subsection{Lefschetz groups and the Albert	classification}\label{sec:albertlef}
	Let $A$ be a complex abelian variety. 
	The proof of Theorem~\ref{thm:lefinv} proceeds through the computation of the
	Lefschetz group $L(A)$. 
	We start this paragraph by reviewing how the Lefschetz group of a simple complex abelian variety can be computed  via the characterization of the
	possible algebras $\End^0(A)$\,; for this we follow Murty \cite{murty}, and we refer to Shimura \cite{shimura} for the
	classification of such algebras via the Albert classification of division
	algebras with a positive involution.
	Set $D:= \End^0(A) = \End(A)\otimes_\Z
	\Q$. The Rosati involution $\rho$ of the
	semi-simple $\Q$-algebra $D$ induced by a polarization of $A$  defines a
	\emph{positive} involution of $D$ in the sense that $D$ has finite dimension
	over $\Q$ and the reduced trace $\mathrm{tr}_{D/\Q}(x\rho(x))$ is positive for
	all non-zero $x\in D$.
	From now on,
	we assume that $A$ is simple\,; in that case, $D$ is a division algebra. The
	involution $\rho$ restricts to a positive involution of the center $Z$ of $D$,
	and we
	denote $F$ the set of elements $z\in Z$ such that $\rho(z) = z$. As we have
	$\operatorname{tr}(z^2)>0$ for every non-zero element $z$ of $F$, the field $F$
	must be a totally real field. We set $d:= [D:Z]^{1/2}$ and $f:= [F:\Q]$. 
	According to Albert the following possibilities can occur for division algebras
	endowed with a positive involution\,:
	
	\begin{enumerate}[Type I.]
		\item $D=F$ is a totally real field\,;
		\item $D$ is a central division algebra over $F$ such that $D\otimes_\Q
		\mathds{R}$ is isomorphic to the product of $f$ copies of the matrix algebra
		$\operatorname{M}_2(\mathds{R})$\,;
		\item $D$ is a central division algebra over $F$ such that $D\otimes_\Q
		\mathds{R}$ is isomorphic to the product of $f$ copies of the quaternion
		algebra
		$\mathds{H}$\,;
		\item $D$ is a central  division algebra over a totally imaginary quadratic
		extension $F_0$ of $F$. 
	\end{enumerate}
	Accordingly a simple abelian variety $A$ is said to have type I, II, III, or
	IV,
	if $D= \End^0(A)$ has type I, II, III, or IV, respectively. For endomorphism
	rings of simple complex abelian varieties of dimension $g$, there are further
	dimension restrictions on the division algebras, coming from that fact that $D$
	acts faithfully on the $2g$-dimensional vector space $V(A) = \HH_1(A,\Q)$ (and
	the fact
	that the action of $D$ commutes with the complex structure for type I), namely 
	$f | g$ for type I,  $2f | g$ for types II and III, and $fd^2 | 2g$ for type
	IV.
	Shimura \cite{shimura} showed that every division algebra with a positive
	involution occurs as the endomorphism algebra of a simple complex abelian
	variety, except in 5 exceptional cases\,; in particular
	\cite[Prop.~15]{shimura}, for a simple abelian variety of type III, $2f$ must
	divide $g$ strictly.
	
	We now fix a skew-symmetric non-degenerate pairing $\phi : V(A) \times V(A) \to
	\Q$ determined by a polarization of $A$, via \eqref{eq:pairing}.
	For each type, there exist a unique non-degenerate $F$-bilinear
	form\footnote{The trace pairing $D\times D \to \Q, (a,b) \mapsto
		\mathrm{tr}_{D/\Q}(ab)$ is
		non-degenerate, and 
		$B(x,y)$ is the unique element in $D$ satisfying $\mathrm{tr}_{D/\Q}(aB(x,y))
		= \phi(ax,y)$ for all $a \in D$.} 
	$$B : V(A) \times V(A) \to D$$
	such that $\phi(x,y) = \mathrm{tr}_{D/\Q} B (x, y)$, $B (ax, by) = aB (x, y)b$,
	and $B (y, x) = - \rho\circ B (x, y)$ for all $x, y \in V(A)$ and all $a, b
	\in D$. The Lefschetz group is then the restriction of
	scalars, from $F$ to $\Q$, of the unitary group of $B$\,:
	$$ L(A) = \mathrm{Res}_{F/\Q} \operatorname{U}(B) =
	\mathrm{Res}_{F/\Q}\operatorname{Aut}_D(V(A),B).$$
	Let $S$ be the set of embeddings of $F$ into $\mathds{R}$. We can then write
	$$
	V(A)_{\mathds{R}} = \bigoplus_{\lambda \in S}V_\lambda,$$
	where $V_{\lambda}:= V(A) \otimes_{F,\lambda}\mathds{R} = \{v \in
	V(A)_{\mathds{R}} : f(v) = \lambda(f) v \ \mbox{for all } f\in F\}$ is a real
	vector space
	of dimension $2g/f$. In fact, since $D$ commutes with
	$\mathrm{Hdg}(A)_\mathds{R}$, $V_{\lambda}$ is a real Hodge sub-structure of
	$V(A)_\mathds{R}$. Since $L(A)$ commutes with the action of $F$, we have
	$L(A)_{\mathds{R}} \subseteq \prod \operatorname{GL}(V_\lambda)$ and thus
	$$L(A)_{\mathds{R}} = \prod_{\lambda \in S}L_\lambda,\quad \text{with }
	L_\lambda = \operatorname{Aut}_{D_\lambda}(V_\lambda,B_\lambda),$$
	where	$L_\lambda$ acts trivially on $V_{\lambda'}$ unless
	$\lambda=\lambda'$. Here $D_\lambda := D \otimes_{F,\lambda} \mathds{R}$ and
	$B_\lambda$ is the non-degenerate real bilinear form that is the restriction of
	$B\otimes_{\Q} \mathds{R}$ to $V_\lambda
	\times V_\lambda$.

	For types II and III, there exists an $F$-basis $1,\alpha,\beta,\alpha\beta$
	for $D$, with $\alpha^2$ totally negative, $\beta^2$ totally positive for type
	II and totally negative for type III, and $\alpha\beta = - \beta\alpha$.
	Denoting $E:= F[\alpha]$, we have $D = E \oplus E\beta$, and we can write 
	$$B(x,y) = B_1(x,y) + B_2(x,y)\beta, \quad \text{with } B_1(x,y), B_2(x,y) \in
	E.$$
	Then $B_1 : V(A) \times V(A) \to E$ is a non-degenerate skew-Hermitian form,
	and $B_2 : V(A) \times V(A) \to E$ is a non-degenerate skew-symmetric form for
	type II and a non-degenerate symmetric form for type III.
	Given an embedding $\lambda : F\hookrightarrow \mathds{R}$, we denote
	$\sigma, \bar{\sigma} : E \hookrightarrow \C$ the conjugate extensions of
	$\lambda$ to $E$. We define $V_\sigma := V \otimes_{E,\sigma} \C$, and we
	remark that $B_{1,\sigma} := (B_1\otimes_\Q \C)|_{V_\sigma\times V_\sigma} =
	0$,
	while $B_{2,\sigma} := (B_2\otimes_\Q \C)|_{V_\sigma\times V_\sigma}$ is
	non-degenerate (and similarly with $\bar \sigma$ in place of $\sigma$). 
	\medskip
	
	\noindent The group
	$L_\lambda$ and its action on $V_\lambda$ are given as follows (see
	\cite{murty}, but also \cite{milne})\,:
	\begin{enumerate}[Type I.]
		\item	$L_\lambda = \operatorname{Sp}_{\frac{2g}{f}}(V_\lambda , B_\lambda
		)$ is a symplectic
		group acting via its standard representation on $V_\lambda$\,;
		\item	$L_{\lambda}\otimes_{\mathds{R}}\C =
		\operatorname{Sp}_{\frac{g}{f}}(V_{\sigma},B_{2,\sigma})$ is a symplectic
		group
		acting on $V_{\lambda}\otimes_{\mathds{R}} \C = V_{\sigma} \oplus
		V_{\bar{\sigma}}$ as one copy of the
		standard representation and one copy of its contragredient representation
		(which is isomorphic to the standard representation).
		\item	$L_{\lambda}\otimes_{\mathds{R}}\C  =
		\operatorname{O}_{\frac{g}{f}}(V_{\sigma},B_{2,\sigma})$ is an orthogonal
		group
		acting on $V_{\lambda}\otimes_{\mathds{R}} \C = V_{\sigma} \oplus
		V_{\bar{\sigma}}$ as one copy of the
		standard representation and one copy of its contragredient representation
		(which is isomorphic to the standard representation).\footnote{In that case
			$\frac{g}{f}$ is an even number $\geq 4$ by \cite[Prop.~15]{shimura}.}
		\item	$L_{\lambda}\otimes_{\mathds{R}}\C =
		\operatorname{GL}_{\frac{g}{df}} (\C)$ acts on
		$V_{\lambda}\otimes_{\mathds{R}} \C$ as the direct sum of the
		standard representation and its	contragredient representation.
	\end{enumerate}
	In particular, the Lefschetz group $L(A)$ is a reductive group.
	The Lefschetz group $L(A)$ was first computed by Ribet \cite{ribet} for type I
	and IV in the case $D=F$. It was computed in general by Murty
	\cite{murty}\footnote{For type III Murty finds that 
		$L_{\lambda}\otimes_{\mathds{R}}\C$ is
		a special orthogonal group\,; this is because, contrary to the convention we
		adopted here, he considers the connected
		component of the identity of the Lefschetz group.}.\medskip

	Here is a useful basic fact (which is made more precise in the proof of
	\cite[Th\'eor\`eme~6.1]{ancona}) that can be derived from the reductiveness of the Lefschetz group\,:
	\begin{lem}\label{lem:projlef}
		Let $A$ be a complex abelian variety, and let $H\subseteq \HH^*(A,\Q)$ be a
		Hodge sub-structure. Then  $H\subseteq \HH^*(A,\Q)$ is a
		$L(A)$-sub-representation if and only if 
there exists a projector
		 $\HH^*(A,\Q)  \to \HH^*(A,\Q)$ with image $H$ that is
		induced
		by an idempotent correspondence in $\R^{\dim A}(A\times A)$.
	\end{lem}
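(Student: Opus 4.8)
The plan is to prove the two implications of Lemma~\ref{lem:projlef} separately, using Theorem~\ref{thm:lefinv} as the bridge between the Lefschetz group and the algebra $\R^*$.

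For the \emph{if} direction, suppose $H$ is the image of a projector induced by an idempotent $e \in \R^{\dim A}(A \times A)$. By Theorem~\ref{thm:lefinv}, the class of $e$ in $\HH^{2\dim A}(A \times A, \Q)$ is invariant under $L(A)$ (acting diagonally on the two factors, where it acts on $\HH^*(A,\Q)$ via its representation on $V(A) = \HH_1(A,\Q)$ and its dual). Hence the induced endomorphism $e_* : \HH^*(A,\Q) \to \HH^*(A,\Q)$ commutes with the $L(A)$-action, so its image $H$ is an $L(A)$-subrepresentation. This direction is essentially immediate once one identifies correspondences in $\overline\R^{\dim A}(A \times A)$ with $L(A)$-invariant elements of $\End(\HH^*(A,\Q)) \cong \HH^*(A,\Q) \otimes \HH^*(A,\Q)^\vee$, the latter being a Lefschetz representation of the type to which Theorem~\ref{thm:lefinv} applies.

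For the \emph{only if} direction, I would start from an arbitrary $L(A)$-subrepresentation $H \subseteq \HH^*(A,\Q)$ and produce an equivariant projector onto it. First, since $A$ carries a polarization, the Lefschetz group $L(A)$ is \emph{reductive} (as recorded at the end of \S\ref{sec:albertlef}). Reductivity guarantees that the category of finite-dimensional $L(A)$-representations is semisimple, so $H$ admits an $L(A)$-stable complement in $\HH^*(A,\Q)$; the associated projection $\pi : \HH^*(A,\Q) \to \HH^*(A,\Q)$ with image $H$ is then an $L(A)$-equivariant idempotent endomorphism. Equivalently, one averages any linear projector onto $H$ over $L(A)$ using reductivity (semisimplicity of the representation category) to render it equivariant while preserving its image and idempotency. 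The key point is that $\pi$, viewed as an element of $\End(\HH^*(A,\Q))$, is $L(A)$-invariant.

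It remains to realize $\pi$ by a correspondence in $\R^{\dim A}(A \times A)$, and this is the step I expect to be the main obstacle. The natural identification $\End(\HH^*(A,\Q)) \cong \bigoplus_i \HH^{2\dim A}(A \times A, \Q)$, compatible with the $L(A)$-actions, sends $L(A)$-invariant endomorphisms to $L(A)$-invariant cohomology classes on $A \times A$; by Theorem~\ref{thm:lefinv} these invariant classes are exactly $\overline\R^{\dim A}(A \times A)$. Thus the class $[\pi]$ lies in $\overline\R^{\dim A}(A \times A)$, and since $\R^{\dim A}(A \times A)$ maps \emph{isomorphically} onto $\overline\R^{\dim A}(A \times A)$ (O'Sullivan's Theorem~\ref{T:Osullivan}, as recorded after Definition~\ref{def:R}), there is a unique cycle $p \in \R^{\dim A}(A \times A)$ with $[p] = [\pi]$. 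Because the cycle-class map is an isomorphism on $\R^*$, the idempotency relation $\pi \circ \pi = \pi$ in cohomology lifts to $p \circ p = p$ in $\R^*(A \times A)$ (the composition of cycles in $\R^*$ is again in $\R^*$, as $\R^*$ is a subalgebra of $\DCH^*$ which is closed under composition of correspondences, and composition is detected faithfully in cohomology), so $p$ is a genuine idempotent correspondence inducing the projector with image $H$. The delicate points to verify carefully are that the $\End(\HH^*(A,\Q))$-to-cohomology identification is genuinely $L(A)$-equivariant for the correct bidegree bookkeeping, and that compositions of classes in $\overline\R^*$ remain in $\overline\R^*$ so that the lift of the idempotency relation is legitimate.
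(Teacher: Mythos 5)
Your proof is correct and follows essentially the same route as the paper: reductivity of $L(A)$ to produce an $L(A)$-invariant projector, Theorem~\ref{thm:lefinv} to identify it with an idempotent in $\overline{\R}^{\dim A}(A\times A)$, and O'Sullivan's Theorem~\ref{T:Osullivan} to lift it idempotently to $\R^{\dim A}(A\times A)$. The two ``delicate points'' you flag are genuine but unproblematic: the identification $\End(\HH^*(A,\Q))\cong\HH^{2\dim A}(A\times A,\Q)$ is $L(A)$-equivariant because Poincar\'e duality is induced by the $L(A)$-invariant symplectic form, and stability of $\overline{\R}^*$ under composition is exactly the observation in the proof of Proposition~\ref{prop:R}(b).
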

	\begin{proof} 
	Thanks to the fact that $L(A)$ is reductive,
$H$ is a $L(A)$-sub-representation of
$\HH^*(A,\Q)$ if and only if
there exists a $L(A)$-invariant projector $\HH^*(A,\Q)  \to \HH^*(A,\Q)$ with image~$H$.
By Theorem~\ref{thm:lefinv} a $L(A)$-invariant projector $\HH^*(A,\Q)  \to \HH^*(A,\Q)$ with image~$H$ is induced by an idempotent correspondence in $\overline{\R}^{\dim A}(A\times A)$, and by O'Sullivan's theorem~\ref{T:Osullivan} such an idempotent correspondence can be lifted to an idempotent correspondence in $\R^{\dim A}(A\times A)$.
	\end{proof}
	
	Finally, we observe that, since $D\otimes_{\Q} \C
	\cong \End_{\mathrm{Hdg}(A)_\C}(\HH_1(A,\C))$, the action of $D\otimes_{\Q} \C
	$
	on $V(A)_\C = H_1(A,\C)$ commutes with the Hodge decomposition. In particular,
	if $E$ is a field sitting inside $D$, the decomposition $V_\C = \oplus_{\sigma
		:
		E \hookrightarrow \C}V_\sigma$ is compatible with the Hodge decomposition, so
	that writing $V_\sigma^{1,0} = V^{1,0} \cap V_\sigma$ and similarly for
	$V_\sigma^{0,1}$, we have 
	\begin{equation}\label{eq:dec}
		V_\sigma = V_\sigma^{1,0}  \oplus V_\sigma^{0,1}.
	\end{equation}
	We note also that $$ \overline{V_\sigma^{1,0}} = 
	V_{\overline{\sigma}}^{0,1}.$$
	The following lemma  will be crucial to the proof
	of Theorem~\ref{thm:lefhodge}. Since it does not hold in general for simple
	abelian varieties of type IV (e.g. CM elliptic curves), our focus until \S
	\ref{s:GHC2} will be on abelian varieties of totally real type.

	\begin{lem}\label{lem:isotropic}
		Let $A$ be a simple complex abelian variety of type I, II or III. 
		Let $E$ be a maximal subfield of $\End^0(A)$, which we choose as above to be a
		CM field for types II and III. Let $V_\sigma := V(A) \otimes_{E,\sigma} \C$
		for
		an embedding $\sigma : E \hookrightarrow \C$.
		Then the decomposition \eqref{eq:dec} is a decomposition into isotropic
		subspaces for the non-degenerate form\footnote{Note that for types II and III,
			$B_\sigma = B_{2,\sigma}$.}
		$$B_\sigma := B_\C|_{ V_\sigma \times V_\sigma} : V_\sigma \times V_\sigma \to
		D\otimes_\Q \C.$$
		In particular, $V_\sigma$ is ``numerically Hodge symmetric'', meaning that
		$$\dim_\C V_\sigma^{1,0} = \dim_\C V_\sigma^{0,1}.$$
	\end{lem}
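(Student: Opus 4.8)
The plan is to deduce the isotropy of the Hodge pieces for $B_\sigma$ from the corresponding classical statement for the polarization form $\phi$, exploiting the defining relation $\phi(x,y)=\mathrm{tr}_{D/\Q}B(x,y)$ together with the fact (recorded just before the statement) that $\End^0(A)$ acts on $V(A)$ by endomorphisms of Hodge structures. Once isotropy is known, the numerical Hodge symmetry $\dim_\C V_\sigma^{1,0}=\dim_\C V_\sigma^{0,1}$ will be a formal consequence of the non-degeneracy of $B_\sigma$, since a non-degenerate bilinear form on a space that is the direct sum of two isotropic subspaces restricts to a perfect pairing between them.

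First I would recall the first Riemann bilinear relation for the polarized Hodge structure $V(A)=\HH_1(A,\Q)$: since $\phi$ is a morphism of Hodge structures into a Tate twist $\Q(1)$, its $\C$-linear extension $\phi_\C$ can pair a Hodge piece $V^{p,q}$ nontrivially only with the piece of complementary type, so that $V^{1,0}_\C$ and $V^{0,1}_\C$ are isotropic for $\phi_\C$. Next I would transfer this to $B$. Fix $x,y\in V^{1,0}_\C$. The $\C$-linear extension of the identities $\phi(x,y)=\mathrm{tr}_{D/\Q}B(x,y)$ and $B(ax,y)=aB(x,y)$ gives, for every $a\in D\otimes_\Q\C$, the relation $\mathrm{tr}_{D/\Q}\big(a\,B_\C(x,y)\big)=\phi_\C(ax,y)$. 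Because $D\otimes_\Q\C\cong\End_{\mathrm{Hdg}(A)_\C}(\HH_1(A,\C))$ preserves the Hodge decomposition, we have $ax\in V^{1,0}_\C$, whence $\phi_\C(ax,y)=0$ by the Riemann relation. Thus $\mathrm{tr}_{D/\Q}\big(a\,B_\C(x,y)\big)=0$ for all $a\in D\otimes_\Q\C$, and since the trace pairing on $D$ is non-degenerate (hence so is its $\C$-linear extension on $D\otimes_\Q\C$), we conclude $B_\C(x,y)=0$. The same computation applied to $V^{0,1}$ shows that $B_\C$ vanishes on $V^{0,1}_\C\times V^{0,1}_\C$. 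Restricting to $V_\sigma$ and recalling that $B_\sigma=B_\C|_{V_\sigma\times V_\sigma}$, this says precisely that the two summands of \eqref{eq:dec} are isotropic for $B_\sigma$. I would emphasize that this step is uniform across types I, II and III: no case distinction is needed, the type entering only through the non-degeneracy of $B_\sigma$, which has already been established.

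Finally, for the numerical symmetry, I would note that on $V_\sigma$ the form $B_\sigma$ takes values in the single factor of $D\otimes_\Q\C$ cut out by $\sigma$ (because $B$, respectively $B_2$, is $E$-linear and $E$ acts on $V_\sigma$ through $\sigma$), so that $B_\sigma$ is an honest non-degenerate $\C$-valued form — this is exactly the content of the preceding remark that $B_\sigma=B_{2,\sigma}$ is non-degenerate in the types II, III case, the $B_1$-component dying on $V_\sigma$. Writing $V_\sigma=V_\sigma^{1,0}\oplus V_\sigma^{0,1}$ with both summands isotropic, non-degeneracy forces both induced maps $V_\sigma^{1,0}\to (V_\sigma^{0,1})^\vee$ and $V_\sigma^{0,1}\to (V_\sigma^{1,0})^\vee$ to be injective, which yields $\dim_\C V_\sigma^{1,0}=\dim_\C V_\sigma^{0,1}$.

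The argument is short, and I do not expect a serious obstacle. The one point requiring care is the interplay between the left and right $D$-actions in $B(ax,by)=aB(x,y)b$ and in $\phi(x,y)=\mathrm{tr}_{D/\Q}B(x,y)$, so that the substitution $\phi_\C(ax,y)=\mathrm{tr}_{D/\Q}\big(a\,B_\C(x,y)\big)$ is carried out with the action applied to the correct variable; and a second, routine, verification that the $\C$-valued reduction of $B_\sigma$ is compatible with the identification $B_\sigma=B_{2,\sigma}$, which is already part of the preceding discussion.
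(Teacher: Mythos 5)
Your proposal is correct and follows essentially the same route as the paper: both use the defining relation $\mathrm{tr}_{D\otimes_\Q\C/\C}(aB_\C(x,y))=\phi_\C(ax,y)$, the isotropy of $V^{1,0}$ and $V^{0,1}$ for $\phi_\C$, the compatibility of the $D\otimes_\Q\C$-action with the Hodge decomposition, and the non-degeneracy of the trace pairing to conclude $B_\C(x,y)=0$ on each Hodge piece, then restrict to the $\sigma$-component. Your closing argument for the numerical Hodge symmetry (a non-degenerate form on a sum of two isotropic subspaces forces equal dimensions) is exactly the step the paper leaves implicit.
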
	
	\begin{proof} Recall that, for $x,y \in V_\C$, $B_\C(x,y)$ is the unique
		element
		in $D\otimes_{\Q}\C$ such that 
		$$\mathrm{tr}_{D\otimes_{\Q}\C / \C} (aB_\C(x,y))  = \phi_{\C} (ax,y), \quad
		\text{for all } a \in D\otimes_{\Q}\C.$$
		Since $V^{1,0}$ and $V^{0,1}$ are isotropic subspaces for the form $\phi_\C$
		and since the action of $D\otimes_{\Q}\C$ on $V_\C$ is compatible with the
		Hodge
		decomposition,
		we deduce that $B_\C(x,y) = 0$ for all $x, y \in V^{1,0}$ (resp. for all $x,y
		\in V^{0,1}$). The lemma follows by restricting to the $\sigma$-component in
		the
		decomposition $V_\C = \oplus_{\sigma : E \hookrightarrow
			\C}V_\sigma.$
	\end{proof}
	
	In summary, for simple abelian varieties of totally real type,
	Lemma~\ref{lem:isotropic} provides the following relations between the Hodge
	decomposition and the decomposition of the Lefschetz group after base-change\,:
	\begin{enumerate}[Type I.]
		\item The Hodge decomposition 	$V_\lambda \otimes_{\mathds{R}} \C =
		V_\lambda^{1,0} \oplus V_\lambda^{0,1}$ is a decomposition into isotropic
		subspaces for the non-degenerate skew-symmetric form
		$\phi_{\lambda}\otimes_{\mathds{R}}\C$\,;	
		\item The Hodge decomposition $V_\sigma = V_\sigma^{1,0}  \oplus
		V_\sigma^{0,1}$  is a decomposition into isotropic subspaces for the
		non-degenerate skew-symmetric form $B_{2,\sigma}$\,;	
		\item The Hodge decomposition $V_\sigma = V_\sigma^{1,0}  \oplus
		V_\sigma^{0,1}$  is a decomposition into isotropic subspaces for the
		non-degenerate symmetric form $B_{2,\sigma}$.	
	\end{enumerate}

	\subsection{Around Weyl's construction} \label{s:weyl}
	Let $V$ denote the standard representation of one of the classical groups
	$\operatorname{Sp}_{2n}$ or $\operatorname{O}_{2n}$. 
	Precisely, given a basis $(e_1,\ldots, e_n, e_{-1},\ldots , e_{-n})$ of $V$, we
	will be interested in the
	representations of the following groups\,:
	\begin{enumerate}[(a)]
		\item $G= \operatorname{Sp}_{2n}(V,Q)$, where $Q$ is the skew-symmetric
		bilinear form dual to  $$\psi = \sum_{i=1}^n
		e_i\otimes e_{-i} - e_{-i}\otimes e_i \in V\otimes V.$$
		\item $G=\operatorname{O}_{2n}(V,Q)$, where $Q$ is the symmetric bilinear form
		dual to $$\psi = \sum_{i=1}^n
		e_i\otimes e_{-i} + e_{-i}\otimes e_i \in V\otimes V.$$
	\end{enumerate} 
	For each pair $I=\{p<q\}$ of integers between $1$ and $d$, the skew-symmetric
	form
	$Q$ (resp. the symmetric form $Q$) determines a contraction
	$$\Phi_I : V^{\otimes d} \to V^{\otimes (d-2)},$$
	$$v_1\otimes \cdots \otimes v_d \mapsto Q(v_p,v_q) v_1\otimes \cdots\otimes
	\hat{v}_p \otimes \cdots \otimes \hat{v}_q \otimes \cdots \otimes v_d,$$
	where a `hat' means that the term is omitted.
	Denote $V^{\langle d \rangle }$ the intersection of the kernels of all these
	contractions, \emph{i.e.}, 
	$$V^{\langle d \rangle } := \bigcap_I \ker (\Phi_I).$$
	We can also define 
	$$\Psi_I : V^{\otimes (d-2)} \to V^{\otimes d}$$
	by inserting $\psi$ in the $p,q$ factors. 
	We have a direct sum of $G$-representations (see \cite[Ex. 17.13]{FH} for the
	case $G = \operatorname{Sp}_{2n}$\,; the case $G = \operatorname{O}_{2n}$ is
	similar)
	\begin{equation}\label{eq:*}
		V^{\otimes d} = V^{\langle d \rangle } \oplus \sum_I \operatorname{im}\,
		(\Psi_I).
	\end{equation}
	
	By considering the action of the symmetric group $\mathfrak{S}_d$ on
	$V^{\otimes
		d}$, we also have a direct sum decomposition of
	$\mathfrak{S}_d$-representations
	\cite[Ex. 4.50]{FH}
	\begin{equation}\label{eq:**}
		V^{\otimes d} = \bigoplus_{\lambda \dashv d} \mathds{S}_\lambda V.
	\end{equation}
	Here the direct sum runs through all \emph{standard Young tableaux} in $d$
	entries, and $\mathds{S}_\lambda V$ is the Schur symmetrizer attached to the
	underlying Young diagram. Moreover, for each standard Young tableau $\lambda$,
	there is an idempotent $p_\lambda \in \Q[\mathfrak{S}_d]$ (a rational multiple
	of the Young symmetrizer $c_\lambda$) such that $\mathds{S}_\lambda V =
	p_\lambda \cdot V^{\otimes d}$, and these idempotents are mutually orthogonal
	meaning that $p_\lambda p_\mu = 0$ for two distinct standard Young tableaux
	$\lambda$ and $\mu$. 
	
	Clearly a permutation $\sigma \in \mathfrak{S}_d$ commutes with the
	decomposition \eqref{eq:*}, and hence so do the idempotents $p_\lambda$. It
	follows that $V^{\langle d \rangle}$ further decomposes into a direct sum of
	$G$-representations as
	\begin{equation}
		V^{\langle d \rangle} = \bigoplus_{\lambda \dashv d} \mathds{S}_{\langle
			\lambda \rangle }V, \quad \text{where} \ \mathds{S}_{\langle \lambda \rangle
		}V
		:= \mathds{S}_\lambda V\cap V^{\langle d \rangle }.
	\end{equation}

	In order to state the next proposition, we need to introduce some notations. We
	follow Bourbaki \cite[Chap. VIII, \S 13.3 \& \S 13.4]{bourbaki}. Let $E_{i,j}$
	be the $2n\times 2n$ matrix expressed in the basis $(e_1,\ldots, e_n,
	e_{-1},\ldots , e_{-n})$ whose entries are all zero except for the $(i,j)$-th
	entry which is $1$. For $1\leq i \leq n$, we define $$H_i :=  E_{i,i} -
	E_{-i,-i}$$ and we let $(\varepsilon_1, \ldots, \varepsilon_n)$ be the dual
	basis of $(H_1,\ldots, H_n)$.

	Given a standard Young tableau $\lambda$ on $d$ entries, we denote
	$(\lambda_1\geq \lambda_2\geq \cdots \geq \lambda_d)$ the underlying partition
	of $d$. Subsequently, the number $d = \sum_i\lambda_i$ will also be referred to
	as the \emph{length} of $\lambda$ and will be denoted $\ell(\lambda)$.

	\begin{prop} \label{prop:irred}
		Assume that $G$ is either $\operatorname{Sp}_{2n}$, or $\operatorname{O}_{2n}$
		with
		$n>1$.
		Then $ \mathds{S}_{\langle \lambda \rangle }V$ is an
		irreducible representation of $G$.
		\begin{enumerate}[(a)]
			\item	If $G= \operatorname{Sp}_{2n}$, then $ \mathds{S}_{\langle \lambda
				\rangle
			}V
			\neq  0$ if and only if $\lambda_{n+1} = 0$\,; in that case $
			\mathds{S}_{\langle \lambda \rangle }V$  is the irreducible representation of
			$\mathfrak{sp}_{2n}$ with highest weight $\lambda_1 \varepsilon_1 + \cdots +
			\lambda_n\varepsilon_n$.  
			
			\item  If $G=
			\operatorname{O}_{2n}$, then $ \mathds{S}_{\langle \lambda \rangle }
			V\neq  0$ if and only if the sum of the lengths of the first two columns of
			the Young tableau $\lambda$ is at most $2n$.\footnote{Representations
				of associated partitions restricted to $\operatorname{SO}_{2n}$ are
				isomorphic.
				Two
				partitions (each with the sum of the first two column lengths at most $2n$)
				are said to be
				\emph{associated} if the sum of the lengths of their first columns is $2n$
				and
				the other columns of their Young diagram have the same lengths.} 
			\begin{itemize}
				
				\item If  $\lambda = (\lambda_1\geq\cdots\geq
				\lambda_{n} = 0)$, then $\mathds{S}_{\langle \lambda \rangle }V$  is the
				irreducible representation of $\mathfrak{so}_{2n}$ with highest weight
				$\lambda_1 \varepsilon_1 + \cdots + \lambda_n\varepsilon_n$. 
				
				\item If  $\lambda =
				(\lambda_1\geq\cdots\geq
				\lambda_{n} > 0)$, then $\mathds{S}_{\langle \lambda \rangle }V$  is the
				direct sum
				of the
				two irreducible representations of $\mathfrak{so}_{2n}$ with highest weight
				$\lambda_1 \varepsilon_1 + \cdots +
				\lambda_{n-1}\varepsilon_{n-1}+ \lambda_n\varepsilon_n$, and $\lambda_1
				\varepsilon_1 + \cdots +
				\lambda_{n-1}\varepsilon_{n-1} - \lambda_n\varepsilon_n$.  
			\end{itemize}
		\end{enumerate}

	\end{prop}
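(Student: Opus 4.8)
The statement is Weyl's construction of the irreducible representations of the classical groups, and the plan is to deduce it from the treatment in Fulton--Harris \cite{FH}, the symplectic case being \cite[\S 17.3]{FH} and the orthogonal case \cite[\S 19.5]{FH}. First I would record the two structural facts underlying everything. Since the form $Q$ is $G$-invariant, each contraction $\Phi_I$ is a morphism of $G$-representations, so that $V^{\langle d \rangle} = \bigcap_I \ker(\Phi_I)$ is a $G$-subrepresentation and the splitting \eqref{eq:*} is a splitting of $G$-representations. On the other hand the $\mathfrak{S}_d$-action on $V^{\otimes d}$ commutes with that of $G$, whence the Young idempotents $p_\lambda \in \Q[\mathfrak{S}_d]$ are $G$-equivariant and preserve $V^{\langle d \rangle}$; this produces the decomposition $V^{\langle d \rangle} = \bigoplus_\lambda \mathds{S}_{\langle \lambda \rangle}V$ into $G$-subrepresentations and reduces the proposition to analyzing a single summand.

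Next I would establish irreducibility and identify the highest weight by highest-weight theory for the Lie algebra $\mathfrak{g}$ of $G$, using the Borel attached to the basis $(e_1,\dots,e_n,e_{-1},\dots,e_{-n})$, with Cartan spanned by the $H_i$ and the dual weights $\varepsilon_i$ of the proposition. The key observation is that the subspace spanned by $e_1,\dots,e_n$ is isotropic for $Q$, so the $\operatorname{GL}_{2n}$-highest-weight vector of $\mathds{S}_\lambda V$ --- obtained by applying $c_\lambda$ to a tensor involving only $e_1,\dots,e_{\lambda'_1}$, where $\lambda'$ denotes the conjugate partition --- is automatically annihilated by every contraction $\Phi_I$ as soon as $\lambda'_1 \leq n$, hence already lies in $\mathds{S}_{\langle \lambda \rangle}V$ and has weight $\lambda_1\varepsilon_1 + \cdots + \lambda_n\varepsilon_n$. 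One then shows that, modulo the images of the insertion maps $\Psi_I$, this is the unique highest weight occurring, so that $\mathds{S}_{\langle \lambda \rangle}V$, when nonzero, is generated by a single highest-weight line and is therefore irreducible of the stated highest weight. For $G = \operatorname{Sp}_{2n}$ such a traceless highest-weight vector exists exactly when $\lambda'_1 \leq n$, i.e.\ $\lambda_{n+1} = 0$, giving (a); for $G = \operatorname{O}_{2n}$ the corresponding count is governed by the first two columns and yields nonvanishing precisely when $\lambda'_1 + \lambda'_2 \leq 2n$, which is the condition in (b).

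Finally I would pass from $\operatorname{O}_{2n}$ to its identity component, equivalently to $\mathfrak{so}_{2n}$, to obtain the last two bullet points, and this is the step requiring the most care. As an $\operatorname{O}_{2n}$-representation $\mathds{S}_{\langle \lambda \rangle}V$ is irreducible, but its restriction to $\mathfrak{so}_{2n}$ may split according to the diagram automorphism of type $D_n$ acting by $\varepsilon_n \mapsto -\varepsilon_n$, which is induced by the nontrivial coset of $\operatorname{O}_{2n}/\operatorname{SO}_{2n}$. When $\lambda_n = 0$ the highest weight $\lambda_1\varepsilon_1 + \cdots + \lambda_{n-1}\varepsilon_{n-1}$ is fixed by this automorphism and the module stays irreducible; when $\lambda_n > 0$ the automorphism carries the highest weight to its distinct conjugate, and $\mathds{S}_{\langle \lambda \rangle}V$ accordingly decomposes into the two irreducible $\mathfrak{so}_{2n}$-modules with highest weights $\lambda_1\varepsilon_1 + \cdots + \lambda_{n-1}\varepsilon_{n-1} \pm \lambda_n\varepsilon_n$. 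The hypothesis $n > 1$ enters precisely here, to exclude the degenerate torus $\mathfrak{so}_2$ for which the highest-weight description breaks down. I expect the genuine difficulties to be concentrated in this last splitting analysis and in the orthogonal nonvanishing count; both are carried out in \cite[\S 19.5]{FH}, and once the basis and Borel above are fixed their conventions translate directly into ours.
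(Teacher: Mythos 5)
Your proposal is correct and follows essentially the same route as the paper, which simply cites \cite[Thm.~17.11 and Cor.~17.21]{FH} for the symplectic case and \cite[Thm.~19.19 and Thm.~19.22]{FH} for the orthogonal case; your outline is a faithful sketch of the internal arguments of those references (traceless highest-weight vector, non-vanishing count, and the $\operatorname{O}_{2n}$ versus $\mathfrak{so}_{2n}$ splitting), and you correctly defer the genuinely delicate steps to \emph{loc.~cit.}
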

	\begin{proof}
		If $G = \operatorname{Sp}_{2n}$, this is \cite[Thm.~17.11 and Cor.~17.21]{FH}.
		If $G = \operatorname{O}_{2n}$, this is \cite[Thm.~19.19 and Thm.~19.22]{FH}.
	\end{proof}

	\subsection{Lefschetz representations and the generalized Hodge conjecture for
		abelian varieties of totally real type}
	The generalized Hodge conjecture was established by Hazama~\cite{hazama} for 
	abelian
	varieties whose simple factors are of type I or II and whose Hodge group
	coincides with their Lefschetz group, for
	$n$-dimensional simple abelian varieties of type I with $n/e$ odd ($e = \dim_\Q
	\End^0(A)$) by Tankeev \cite{tankeev2} (and in particular for odd-dimensional
	simple abelian varieties of type I), for certain simple abelian varieties of
	CM-type by Tankeev \cite{tankeev2}. Abdulali~\cite{abdulali} and Hazama \cite{hazama3} showed that the
	generalized Hodge conjecture for  abelian varieties of CM-type is implied by
	the
	Hodge conjecture for the same class of abelian varieties.
	Here we take a different approach and establish a strong form of the
	generalized Hodge conjecture for Lefschetz sub-representations of abelian
	varieties of totally real type.

	\begin{thm}[GHC for Lefschetz sub-representations of abelian varieties of
		totally real type]\label{thm:lefhodge}
		Let $A$ be a complex abelian variety, and let $H \subseteq \HH^k(A,\Q)$ be a
		Lefschetz sub-representation of Hodge level $\leq k-2n$. Suppose that $A$ is
		of totally real type, \emph{i.e.}, that the
		simple factors of the isogeny
		class of $A$ have type I, II, or III.
		Then
		\begin{equation}\label{eq:inclusion}
			H\subseteq  \mathrm{Im} \left(\overline{\R}^n(A) \otimes \HH^{k-2n}(A,\Q)
			\stackrel{\cup}{\longrightarrow} \HH^k(A,\Q)\right).
		\end{equation}
	\end{thm}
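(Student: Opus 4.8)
The plan is to decompose $\HH^k(A,\Q)$ into irreducible $L(A)$-subrepresentations and to show that each irreducible constituent $W$ of Hodge level exactly $k-2m$ lies in $\mathrm{Im}\bigl(\overline{\R}^m(A)\otimes\HH^{k-2m}(A,\Q)\stackrel{\cup}{\longrightarrow}\HH^k(A,\Q)\bigr)$. Granting this, the theorem follows: since $H$ is a Lefschetz subrepresentation it is a direct sum of such $W$'s, each of Hodge level $\leq\ell(H)\leq k-2n$; as the level is $\equiv k\pmod 2$, each constituent has level $k-2m$ with $m\geq n$; and because a class in $\overline{\R}^m$ is a sum of products of $m$ symmetric divisors, one has the nesting $\mathrm{Im}(\overline{\R}^m\otimes\HH^{k-2m}\to\HH^k)\subseteq\mathrm{Im}(\overline{\R}^n\otimes\HH^{k-2n}\to\HH^k)$ for $m\geq n$, by splitting off the first $n$ divisors. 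I would first reduce to the isotypic case: if $A$ is isogenous to $A_1^{m_1}\times\cdots\times A_s^{m_s}$ with the $A_i$ simple and pairwise non-isogenous, then $L(A)=\prod_iL(A_i)$ by Murty's Lemma~\ref{lem:murty}, the Künneth decomposition writes every $L(A)$-irreducible as a tensor product $\bigotimes_iW_i$ of $L(A_i)$-irreducibles, the Hodge level is additive across Künneth factors, and $\overline{\R}^\bullet$ is multiplicative for the Künneth decomposition since $\bigl(\bigotimes_iU_i\bigr)^{\prod_iL(A_i)}=\bigotimes_iU_i^{L(A_i)}$. Thus it suffices to treat $A=B^r$ with $B$ simple of type I, II, or III.

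For the isotypic case I would invoke Weyl's construction of \S\ref{s:weyl}. After extending scalars to $\C$ and fixing an embedding of the totally real field $F$ (resp.\ of the maximal subfield $E$ for types II, III), the Lefschetz group becomes, on the relevant summand, one of the classical groups $G=\operatorname{Sp}_{2n'}$ (types I, II) or $G=\operatorname{O}_{2n'}$ (type III) acting on its standard representation $U$; by the computation of \S\ref{sec:albertlef} the first cohomology of $B^r$ is built out of copies of $U$ (and, for types II and III, of the contragredient $V_{\bar\sigma}$, which is identified with $U$ through the self-duality furnished by the invariant form). Consequently $\HH^k(A,\C)=\bigwedge^k\HH^1(A,\C)$ decomposes, following the direct sums~\eqref{eq:*} and~\eqref{eq:**} and Proposition~\ref{prop:irred}, into constituents isomorphic to $\mathds{S}_{\langle\lambda\rangle}U$, and each constituent with $|\lambda|=k-2m$ lies in the image of an $m$-fold insertion of the invariant form $\psi$. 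The point is that inserting $\psi$ is cup product with a $G$-invariant class in $\HH^2$: by Theorem~\ref{thm:lefinv} these invariant degree-$2m$ classes are precisely $\overline{\R}^m(A)$, so the depth-$m$ insertion part is contained in $\overline{\R}^m(A)\cdot\HH^{k-2m}(A,\Q)$.

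It remains to match Hodge level with insertion depth, and this is where Lemma~\ref{lem:isotropic} is essential. The Hodge structure on $U$ determines a cocharacter $\mu$ acting by $z$ on $U^{1,0}$ and $z^{-1}$ on $U^{0,1}$; since Lemma~\ref{lem:isotropic} shows $U^{1,0}$ and $U^{0,1}$ are maximal isotropic subspaces of equal dimension $n'$ for the invariant form, $\mu$ is $G$-conjugate to $\sum_{i=1}^{n'}H_i$, which is a \emph{dominant} coweight. Under this identification $\langle w,\mu\rangle=p-q$ recovers the Hodge bigrading, and over the irreducible $\mathds{S}_{\langle\lambda\rangle}U$ its maximal absolute value equals $|\lambda|$, attained at the highest weight $\lambda_1\varepsilon_1+\cdots+\lambda_{n'}\varepsilon_{n'}$ (and, by self-duality of $U$, at the lowest weight). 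Hence the \emph{traceless} constituents $\mathds{S}_{\langle\lambda\rangle}U$ with $|\lambda|=k-2m$ have Hodge level exactly $k-2m$; and since each inserted invariant form is of Hodge type $(1,1)$, the $m$-fold insertion shifts the bigrading by $(m,m)$ and preserves the level. Therefore an irreducible constituent of Hodge level $k-2m$ sits in the depth-$m$ insertion part, that is, in $\overline{\R}^m(A)\cdot\HH^{k-2m}(A,\Q)$; and passing from $\C$ back to $\Q$ is harmless, the target being a $\Q$-rational, $L(A)$-stable subspace.

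The main obstacle I anticipate is exactly this level/depth dictionary inside Weyl's construction, carried out uniformly over the Albert types. For type I it is the clean symplectic computation above, but for types II and III one must reconcile the decomposition $V_\lambda\otimes\C=V_\sigma\oplus V_{\bar\sigma}$ into standard and contragredient with the Hodge grading: one has to apply the isotropy of Lemma~\ref{lem:isotropic} to both $\sigma$ and $\bar\sigma$, and to track the multiplicity with which $U$ occurs, so that the highest-weight/coweight pairing still yields $|\lambda|$ and no constituent acquires an anomalous level. The invariant-theoretic bookkeeping for $\operatorname{O}_{2n'}$ is the most delicate part — in particular the spinor-type constituents with $\lambda_{n'}>0$, which as $\operatorname{SO}_{2n'}$-representations split in two but together form a single $L(A)$-irreducible and hence a single Hodge substructure of level $|\lambda|$. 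Once this is in place, the proof concludes as above.
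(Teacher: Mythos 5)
Your proposal is correct and follows essentially the same route as the paper: reduce to complex coefficients and to irreducible $L(A)_\C$-constituents, split along the simple isogeny factors via Murty's product decomposition, identify the constituents via Weyl's construction and Proposition~\ref{prop:irred}, use the isotropy of the Hodge decomposition (Lemma~\ref{lem:isotropic}) to compute the Hodge level of $\mathds{S}_{\langle\lambda\rangle}V$ as the pairing of the highest weight with $H_0=\sum_i H_i$, and observe that each insertion $\Psi_I$ is cup product with an $L(A)_\C$-invariant class, i.e.\ an element of $\overline{\R}^1(A)_\C$. The points you flag as delicate (types II/III, the $\operatorname{O}_{2n'}$ constituents with $\lambda_{n'}>0$) are exactly those handled by the paper's Lemma~\ref{lem:hodgelenthrep}, and your resolution of them matches the paper's.
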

	In fact, we are going to show a stronger statement, namely that the conclusion
	of Theorem~\ref{thm:lefhodge} holds, after tensoring with $\C$, for
	$L(A)_\C$-sub-representations of $\HH^k(A,\C)$\,; see \eqref{eq:strong}.
	
	The key point towards the proof of Theorem~\ref{thm:lefhodge} consists in
	computing the
	``Hodge level'' of the representations $\mathds{S}_{\langle \lambda \rangle }
	V$
	for
	$G = \operatorname{Sp}_{2n}$ or $\operatorname{O}_{2n}$. 
	Strictly speaking, the spaces $V$ we are going to deal with are not Hodge
	structures. Rather, as described in Section~\ref{sec:albertlef}, they are
	complex vector spaces $V_\sigma$ endowed with a basis  $(e_1,\ldots, e_n,
	e_{-1},\ldots , e_{-n})$ and a (skew-)symmetric form $\psi =  \sum_{i=1}^n
	(e_i\otimes
	e_{-i} \pm e_{-i}\otimes e_i)$, together with an action of
	$\operatorname{GL}_1$ given by $z\cdot
	e_i = ze_i$ and $z\cdot e_{-i} = z^{-1}e_{-i}$ for $1\leq i \leq n$.	
	Since the action of $\operatorname{GL}_1$ on $\psi$ is the identity and since
	it commutes with the action of permutations in $\mathfrak{S}_d$ on $V^{\otimes
		d}$, the decompositions \eqref{eq:*} and \eqref{eq:**} commute with the action
	of $\operatorname{GL}_1$. In particular, for a Young tableau $\lambda$ of
	length
	$d$, we have a decomposition 
	$$\mathds{S}_{\langle \lambda\rangle}V = \bigoplus_{p+q=d} (\mathds{S}_{\langle
		\lambda\rangle}V)^{p,q},$$ 
	where $$(\mathds{S}_{\langle \lambda\rangle}V)^{p,q} := \{w \in
	\mathds{S}_{\langle \lambda\rangle}V : z\cdot w = z^{p-q}w \ \mbox{for all } z
	\in \operatorname{GL}_1(\C)\}.$$
	
	Our Theorem \ref{thm:lefhodge} generalizes Hazama's \cite[Theorem~5.1]{hazama}
	by taking into account Lefschetz sub-representations and by including factors
	of
	type III. The proof is inspired by \emph{loc. cit.}, but differs from it in
	that
	we focus on the representations $\mathds{S}_{\langle \lambda\rangle}V$\,: on
	the
	one hand, by Weyl's construction outlined in \S \ref{s:weyl}, we completely
	avoid resorting to understanding the irreducible sub-representations of tensor
	products as in \cite[Lemma 5.1.2]{hazama}\,; on the other hand, as explained
	before Lemma \ref{lem:isotropic}, these representations $\mathds{S}_{\langle
		\lambda\rangle}V$ are not in general the complexifications of sub-Hodge
	structures, as seems to be assumed in  \cite[Prop.~4.3]{hazama}. However, an important
	feature will be that these irreducible sub-representations are numerically
	Hodge
	symmetric.

	\begin{lem} \label{lem:hodgelenthrep}
		Let $V$ be an even-dimensional complex vector space with basis \linebreak
		$(e_1,\ldots, e_n, e_{-1},\ldots , e_{-n})$, and assume that $G$ is one of the
		following groups\,:
		\begin{enumerate}[(a)]
			\item $G= \operatorname{Sp}(V,\psi)$, where $\psi = \sum_{i=1}^n (e_i\otimes
			e_{-i} - e_{-i}\otimes e_i)$\,;
			\item $G= \operatorname{O}(V,\psi)$, where $\psi = \sum_{i=1}^n (e_i\otimes
			e_{-i} + e_{-i}\otimes e_i)$ and $n>1$.
		\end{enumerate}
		Consider the action of the torus $\operatorname{GL}_1$ on $V$ given by $z\cdot
		e_i = ze_i$ and $z\cdot e_{-i} = z^{-1}e_{-i}$ for $1\leq i \leq n$.
		Let $\lambda$ be a Young tableau of length $d$. Then $\mathds{S}_{\langle
			\lambda \rangle }V$ is numerically
		Hodge symmetric, that is, $\dim_\C (\mathds{S}_{\langle \lambda \rangle }V
		)^{p,q} = \dim_\C (\mathds{S}_{\langle \lambda \rangle }V )^{q,p}$ for all
		integers $p$ and $q$. 
		Moreover,	if $\mathds{S}_{\langle \lambda \rangle }V \neq 0$, then
		\begin{equation*}\label{eq:length}
			(\mathds{S}_{\langle \lambda \rangle }V )^{d,0} \neq 0.
		\end{equation*}
	\end{lem}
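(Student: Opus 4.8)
The plan is to extract everything from the $\operatorname{GL}_1$-weight grading on $V^{\otimes d}$. Write $V^{1,0}:=\Span(e_1,\dots,e_n)$ for the weight-$(+1)$ subspace and $V^{0,1}:=\Span(e_{-1},\dots,e_{-n})$ for the weight-$(-1)$ subspace, so that a pure tensor $e_{i_1}\otimes\cdots\otimes e_{i_d}$ sits in the $(p,q)$-piece of $V^{\otimes d}$ exactly when it has $p$ positive and $q$ negative indices. Two elementary facts will drive the argument: (i) by construction $Q(e_i,e_j)=0$ whenever $i$ and $j$ have the same sign, so $V^{1,0}$ and $V^{0,1}$ are totally isotropic for $Q$; and (ii) the $\C$-linear involution $\tau\colon V\to V$ interchanging $e_i\leftrightarrow e_{-i}$ rescales $\psi$, and hence $Q$, by the factor $\varepsilon=+1$ in the orthogonal case and $\varepsilon=-1$ in the symplectic case.

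First I would establish numerical Hodge symmetry using $\tau$. Because $\tau$ rescales $Q$ by $\varepsilon$ (so that $Q(\tau v,\tau w)=\varepsilon\,Q(v,w)$), one gets $\Phi_I\circ\tau^{\otimes d}=\varepsilon\,\tau^{\otimes(d-2)}\circ\Phi_I$ for every contraction $\Phi_I$, so $\tau^{\otimes d}$ preserves $V^{\langle d\rangle}=\bigcap_I\ker\Phi_I$; since it plainly commutes with the $\mathfrak{S}_d$-action it also preserves each $\mathds{S}_\lambda V$, hence the intersection $\mathds{S}_{\langle\lambda\rangle}V$. On the other hand $\tau(z\cdot v)=z^{-1}\cdot\tau(v)$, so $\tau^{\otimes d}$ carries the $(p,q)$-piece of $V^{\otimes d}$ isomorphically onto the $(q,p)$-piece. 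Restricting this involution to $\mathds{S}_{\langle\lambda\rangle}V$ yields isomorphisms $(\mathds{S}_{\langle\lambda\rangle}V)^{p,q}\xrightarrow{\ \sim\ }(\mathds{S}_{\langle\lambda\rangle}V)^{q,p}$, which is precisely the asserted numerical Hodge symmetry.

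For the second assertion I would identify the top piece explicitly. The weight-$d$ part of $V^{\otimes d}$ is exactly $(V^{1,0})^{\otimes d}$, and by the isotropy (i) every contraction $\Phi_I$ annihilates $(V^{1,0})^{\otimes d}$, so $(V^{1,0})^{\otimes d}\subseteq V^{\langle d\rangle}$. Since $(V^{1,0})^{\otimes d}$ is $\mathfrak{S}_d$-stable and $\mathds{S}_\lambda V=p_\lambda\,V^{\otimes d}$ for the idempotent $p_\lambda$, intersecting gives
\[
(\mathds{S}_{\langle\lambda\rangle}V)^{d,0}=\mathds{S}_\lambda V\cap(V^{1,0})^{\otimes d}=\mathds{S}_\lambda(V^{1,0}),
\]
the ordinary Schur functor applied to the $n$-dimensional space $V^{1,0}$. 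This is nonzero precisely when $\lambda$ fits in $n$ rows, so the whole statement reduces to showing that $\mathds{S}_{\langle\lambda\rangle}V\neq0$ forces $\lambda$ to have at most $n$ rows.

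The last reduction is where the two cases diverge, and the orthogonal case is the main obstacle. In the symplectic case Proposition~\ref{prop:irred}(a) gives $\mathds{S}_{\langle\lambda\rangle}V\neq0\iff\lambda_{n+1}=0$, i.e.\ $\lambda$ has at most $n$ rows, and one is done immediately. For $\operatorname{O}_{2n}$, however, Proposition~\ref{prop:irred}(b) only bounds the sum of the lengths of the first two columns by $2n$, which a priori allows up to $2n$ rows; so the naive row bound fails, and I expect the real work to lie in exploiting the finer structure of the $\operatorname{O}_{2n}$-representations --- the description of the highest weights in terms of $\varepsilon_1,\dots,\varepsilon_n$ together with the associated-partition phenomenon --- to control the number of rows and thereby guarantee that the top piece $\mathds{S}_\lambda(V^{1,0})$ is nonzero.
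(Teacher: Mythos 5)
Your proof of the first assertion is correct and is arguably cleaner than the paper's: the paper deduces numerical Hodge symmetry from the self-duality $V\simeq V^\vee$ of the standard representation of $\mathfrak{sp}_{2n}$ or $\mathfrak{so}_{2n}$, whereas your involution $\tau$ exchanging $e_i\leftrightarrow e_{-i}$ realizes the symmetry concretely inside $V^{\otimes d}$. Your identification $(\mathds{S}_{\langle\lambda\rangle}V)^{d,0}=\mathds{S}_\lambda(V^{1,0})$ is also correct, and it reduces the second assertion to the claim that $\mathds{S}_{\langle\lambda\rangle}V\neq 0$ forces $\lambda$ to have at most $n$ rows; Proposition~\ref{prop:irred}(a) gives exactly this in the symplectic case.

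The orthogonal case, which you leave open, is a genuine gap --- and your own reduction shows it cannot be closed as you hope. Take $\lambda=(1^{n+1})$, a single column of length $n+1\leq 2n$: by Proposition~\ref{prop:irred}(b) we have $\mathds{S}_{\langle\lambda\rangle}V\neq 0$, and in fact $\mathds{S}_{\langle\lambda\rangle}V=\bigwedge^{n+1}V$, since contracting an alternating tensor against a \emph{symmetric} form kills it, so that $\bigwedge^{d}V\subseteq V^{\langle d\rangle}$ automatically in case (b). Yet $\mathds{S}_\lambda(V^{1,0})=\bigwedge^{n+1}V^{1,0}=0$ because $\dim V^{1,0}=n$. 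Concretely, for $n=2$ the space $\mathds{S}_{\langle 1^3\rangle}V=\bigwedge^3V\cong V\otimes\det$ is nonzero but has all of its $\operatorname{GL}_1$-weights in the $(2,1)$ and $(1,2)$ pieces, so $(\mathds{S}_{\langle 1^3\rangle}V)^{3,0}=0$. Thus the second assertion of the lemma actually fails in case (b) for partitions with more than $n$ rows. For comparison, the paper's proof computes $\max\{p-q\}$ as $\omega(H_0)$ for the highest weight $\omega=\lambda_1\varepsilon_1+\cdots+\lambda_n\varepsilon_n$ read off from Proposition~\ref{prop:irred}, and that reading of the highest weight is only stated (and only valid) when $\lambda$ has at most $n$ rows; so the case you flagged as ``the real work'' is precisely the one the paper's own argument does not cover. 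The correct value of $\max\{p-q\}$ for a long column is governed by the \emph{associated} partition, and any repair of the lemma (and of its use in Theorem~\ref{thm:lefhodge}) must replace $d$ by the length of the associated partition, or otherwise account for these constituents.
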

	\begin{proof}
		Our strategy of proof is taken from Hazama's proof of \cite[Prop.~4.3]{hazama}
		where the case $G=\operatorname{Sp}_{2n}$ was treated. Contrary to Hazama, we
		do not assume that $V$ is the complexification of a Hodge structure (since
		when extending scalars to
		$\C$ the irreducible representations of the Lefschetz group that arise are not
		Hodge
		structures). 
		
		We view $\mathds{S}_{\langle \lambda \rangle }V $ as a representation of the
		Lie algebra $\mathfrak{g}$. 	In both cases ($\mathfrak{g} =
		\mathfrak{sp}_{2n}$
		or $\mathfrak{so}_{2n}$), let us recall that, as in Bourbaki \cite[Chap. VIII,
		\S 13.3 \& \S 13.4]{bourbaki}, we let $E_{i,j}$ be the $2n\times 2n$ matrix
		expressed in the basis $(e_1,\ldots, e_n, e_{-1},\ldots , e_{-n})$ whose
		entries
		are all zero except for the $(i,j)$-th entry which is $1$. For $1\leq i \leq
		n$,
		the elements
		$$H_i :=  E_{i,i} - E_{-i,-i}$$ define a basis of a Cartan sub-algebra
		$\mathfrak{h}$ of $\mathfrak{g}$, and we let $(\varepsilon_1, \ldots,
		\varepsilon_n)$ be the dual basis of $(H_1,\ldots, H_n)$.
		
		Viewing  $\mathds{S}_{\langle \lambda \rangle }V$ as a subspace of $V^{\otimes
			d}$, we have the description 
		$$(\mathds{S}_{\langle \lambda \rangle }V)^{p,q} := \{w \in
		\mathds{S}_{\langle \lambda \rangle }V : H_0(w) = (p-q)w\}, \quad \mbox{where}
		\
		H_0 := \sum_i H_i.$$
		In particular, we have $\left( (\mathds{S}_{\langle \lambda \rangle }V
		)^{p,q} \right)^\vee =  (\mathds{S}_{\langle \lambda \rangle }V^\vee )^{q,p},$
		and
		since $V\simeq V^\vee$ as $\mathfrak{g}$-representations for our Lie algebras
		$\mathfrak{g}= \mathfrak{sp}_{2n}$ or $\mathfrak{so}_{2n}$, this immediately
		yields that 
		$\mathds{S}_{\langle \lambda \rangle }V$ is numerically Hodge symmetric.
		We also find that $\max \{p-q : (\mathds{S}_{\langle
			\lambda\rangle}V)^{p,q}\neq 0\}$ is equal to the maximum
		of the eigenvalues of $H_0$ acting on $\mathds{S}_{\langle \lambda \rangle
		}V$.
		We are going to show that if $\mathds{S}_{\langle \lambda \rangle }V \neq 0$,
		then $\max \{p-q : (\mathds{S}_{\langle \lambda\rangle}V)^{p,q}\neq 0\} = d$.

		First consider an irreducible representation $W$ of $\mathfrak{g} =
		\mathfrak{sp}_{2n}$ or $\mathfrak{so}_{2n}$ with highest weight~$\omega$. Let
		$v\in W$ denote one of its dominant vectors. Then for any element $H$ in the
		Cartan sub-algebra $\mathfrak{h}$ of $\mathfrak{g}$, we have 
		$$H(v) = \omega(H)v.$$ Denote $\alpha_i$ the simple roots of $\mathfrak{g}$.
		Specifically, if $\mathfrak{g} = \mathfrak{so}_{2n}$, then $\alpha_1 =
		\varepsilon_1 - \varepsilon_2, \ldots, \alpha_{n-1} = \varepsilon_{n-1} -
		\varepsilon_n, \alpha_n = 2\varepsilon_n$, and if $\mathfrak{g} =
		\mathfrak{sp}_{2n}$, then $\alpha_1 = \varepsilon_1 - \varepsilon_2, \ldots,
		\alpha_{n-1} = \varepsilon_{n-1} - \varepsilon_n, \alpha_n = \varepsilon_{n-1}
		+
		\varepsilon_n$.
		Since the weights of $W$ are of the form 
		$$\omega - \sum_{i=1}^n p_i\alpha_i$$
		for some nonnegative integers $p_i$, we find that 
		\begin{align*}
			\max \{p-q : (\mathds{S}_{\langle \lambda\rangle}V)^{p,q}\neq 0\} &= \max
			\left\{\left(\omega - \sum_i p_i\alpha_i\right)(H_0) : p_1,\ldots,p_n \geq 0
			\right\}\\
			&= \omega(H_0).
		\end{align*}
		
		In our case, by Proposition~\ref{prop:irred}, $\mathds{S}_{\langle \lambda
			\rangle }V$ corresponds either to an irreducible representation of
		$\mathfrak{g}$ with highest weight	$\lambda_1 \varepsilon_1 + \cdots +
		\lambda_n\varepsilon_n$, or in case $\mathfrak{g} = \mathfrak{so}_{2n}$ and
		$\lambda_n>0$ to the sum of two irreducible representations with highest
		weight
		$\lambda_1 \varepsilon_1 + \cdots + \lambda_n\varepsilon_n$ and $\lambda_1
		\varepsilon_1 + \cdots +
		\lambda_{n-1}\varepsilon_{n-1} - \lambda_n\varepsilon_n$.  In any case, we
		find
		that $	\max \{p-q : (\mathds{S}_{\langle \lambda\rangle}V)^{p,q}\neq 0\}$ is
		equal to $\omega(H_0)$, where $\omega = \lambda_1 \varepsilon_1 + \cdots +
		\lambda_n\varepsilon_n$, and hence is equal to $d:= \sum_i \lambda_i$.
	\end{proof}

	\begin{proof}[Proof of Theorem~\ref{thm:lefhodge}]
		We first note that it is enough to establish the theorem with complex
		coefficients. Precisely, we are going to show that for $H \subseteq
		\HH^k(A,\C)$ a
		$L(A)_\C$-sub-representation of Hodge level $\leq k-2n$, we have that $H$ is
		numerically Hodge symmetric and that 
		\begin{equation}\label{eq:strong}
			H\subseteq  \mathrm{Im} \left(\overline{\R}^n(A)_\C \otimes \HH^{k-2n}(A,\C)
			\stackrel{\cup}{\longrightarrow} \HH^k(A,\C)\right).
		\end{equation} 
		Here, by Hodge level we mean the following\,: since $L(A)_\C$ contains the
		circle group (defining the Hodge structure on $\HH_1(A,\Q)$), a
		$L(A)_\C$-sub-representation $H$ of $\HH^k(A,\C)$ has an eigenspace
		decomposition $\bigoplus_{p+q=k} H^{p,q}$, and the \emph{Hodge level} is then
		$\ell(H) := \max \{|p-q| : H^{p,q}\neq 0\}$.
		For ease of notation, we write from now on $\HH^*(-)_\C$
		for $\HH^*(-,\C)$.
		
		Second,  by considering the surjective homomorphism of Lefschetz
		representations \linebreak $\HH^1(A)^{\otimes k} \to \HH^k(A)$ given by
		cup-product, we
		note that we may assume that $H$ is an irreducible Lefschetz
		sub-representation
		of $\HH^1(A)^{\otimes k}_\C \subseteq \HH^k(A^k)_\C$. 
		
		Suppose then that  $A$ is isogenous to $A_1^{m_1}\times \cdots \times
		A_s^{m_s}$, where
		the $A_i$ are pairwise non-isogenous, simple abelian varieties. 
		The $L(A)_\C$-representation $\HH^1(A)_\C$ is isomorphic to the
		$(L(A_1)_\C\times \cdots \times L(A_s)_\C)$-representation
		$\HH^1(A_1)_\C^{\oplus m_1} \oplus \cdots \oplus \HH^1(A_s)_\C^{\oplus m_s}$,
		where each $L(A_i)_\C$ acts diagonally on $\HH^1(A_i)_\C^{\oplus m_i}$.
		If $H$ is an irreducible Lefschetz sub-representation of $\HH^1(A)^{\otimes
			k}_\C$, then up to permutation of the factors we may view $H$
		as an irreducible sub-representation of $\HH^1(A_1)_\C^{\otimes
			k_1}\otimes\cdots
		\otimes \HH_\C^1(A_s)^{\otimes k_s}$ for some non-negative integers $k_i$ such
		that
		$\sum k_i = k$. 
		Since $H$ is a $(L(A_1)_\C\times \cdots \times L(A_s)_\C)$-sub-representation,
		$H$
		must be of the form $$H = H_1\otimes \cdots\otimes H_s$$ for some irreducible
		$L(A_i)_\C$-sub-representations $H_i \subseteq \HH^1(A_i)_\C^{\otimes k_i}$. 
		
		With notations as in \S \ref{sec:albertlef}, $L(A_i)_\C$ is isomorphic to $f
		:=
		[F:\Q]$ copies of the group $G$, which is either the symplectic group (types I
		and II) or the orthogonal group (type III), and $\HH^1(A_i)_\C$ splits as the
		direct sum of $f$ copies of the standard representation $V$ of $G$ (type I) or
		as the direct sum of $2f$ copies of the standard representation $V$ of $G$
		(types II and III).
		Thus $H_i$ is an irreducible sub-representation of $V_1^{\otimes d_1} \otimes
		\cdots \otimes V_t^{\otimes d_t}$, where the $j$-th factor of $G^{\times t} =
		G\times \cdots \times G$ acts on $V_j$ as the standard representation and
		where
		the other factors act trivially. Hence, $H_i$ is of the form 
		$$H_i = H_{i,1}\otimes\cdots \otimes H_{i,t}$$
		for some irreducible $G$-sub-representations  $H_{i,j} \subseteq V_j^{\otimes
			d_j}$.
		
		Now, by Proposition~\ref{prop:irred}, each $H_{i,j}$ must be of the form 
		\begin{equation*}\label{eq:Hij}
			H_{i,j} = \Psi_{I_1} \circ \cdots \circ\Psi_{I_{k_{i,j}}}
			(\mathds{S}_{\langle \lambda_{i,j}\rangle}V_j)
		\end{equation*}
		for some Young tableau  $\lambda_{i,j}$ and some pairs of integers
		$I_1,\ldots,
		I_{k_{i,j}}$.
		From Lemma~\ref{lem:isotropic}, we know that $V_j$ decomposes as $V_j^{1,0}
		\oplus V_j^{0,1}$ in such a way that both $V_j^{1,0}$ and $V_j^{0,1}$ are
		isotropic for the non-degenerate bilinear form on $V_j$ (which is
		skew-symmetric
		for types I and II, and symmetric for type III).
		The assumptions of Lemma~\ref{lem:hodgelenthrep} are thus met for $V_j$, and
		we therefore see that $H_{i,j}$ is numerically Hodge symmetric and satisfies
		$(H_{i,j})^{d_{i,j},0}\neq 0$, where $d_{i,j}$ is the
		length of the Young tableau $\lambda_{i,j}$. We deduce that $H$ is numerically
		Hodge symmetric and satisfies
		$$\ell(H) = \sum_{i,j} d_{i,j}.$$
		Now we can conclude, because composing with $\Psi_I$ amounts to cupping with a
		divisor with complex coefficients.
	\end{proof}

	\begin{rmk}\label{rmk:limit}
		Our method for establishing the generalized Hodge conjecture for Lefschetz
		sub-representations  of abelian varieties of totally real type, which in fact
		consists in establishing it after extending the scalars to $\C$, is too crude to
		work for powers of simple abelian varieties of type~IV. 
		Let us briefly describe a
		simple example. Beforehand, on a positive note, we simply mention that the method
		works for powers of a CM elliptic curve. Let then $A$ be a simple abelian
		surface of type~IV\,; it is known that $A$ must be of CM type, so that its
		Lefschetz group  is $\operatorname{GL}(1)\times \operatorname{GL}(1)$  after
		extending the scalars to $\C$ and we may write $\HH^1(A,\C) = (V\oplus
		V^\vee)\oplus (W\oplus W^\vee)$, where $V= V^{1,0}$ and $W=W^{1,0}$ and
		$\operatorname{GL}(1)\times \operatorname{GL}(1)$  acts on $V$ via the first
		projection and on $W$ via the second projection. Consider then for instance the
		1-dimensional $L(A)_\C$-sub-representation $V\otimes W^\vee$ inside $\HH^2(A,\Q)
		= \bigwedge^2\HH^1(A,\Q)$. On the one hand, it has Hodge type $(1,1)$ but is not
		acted upon trivially by $L(A)_\C$ and thus is not spanned by a Hodge class. That
		type of phenomenon does not occur for abelian varieties of totally real type
		because their Lefschetz representations are numerically Hodge symmetric (Lemma
		\ref{lem:isotropic}). On the other hand, we deduce that the Galois orbit of
		$V\otimes W^\vee$ inside $\HH^2(A,\Q)$ has Hodge length~$2$\,; this suggests
		that it is not straightforward to read the Hodge length of the Galois
		closure of a $L(A)_\C$-sub-representation from its Hodge length without
		resorting to a detailed Galois analysis.
	\end{rmk}

	\subsection{Lefschetz representations and the generalized Hodge conjecture
		II}\label{s:GHC2}
	In this section, we would like to improve slightly on Theorem
	\ref{thm:lefhodge} by allowing our abelian varieties to be isogenous to the
	product of an abelian variety of totally real type with some power of an
	abelian
	surface of CM type, or with the product of powers of three elliptic curves. Our
	main result is Proposition~\ref{prop:projlef}. In particular, we recall a
	strong
	version of the generalized Hodge conjecture for self-powers of abelian
	surfaces\,; see Corollary~\ref{cor:GHCsurface}.
	\medskip
	
	Let us start with the case where our abelian varieties have no factor of
	totally real type.
	The following theorem is due to Abdulali \cite[Examples 2 \& 3]{abdulali}\,:
	
	\begin{thm}[Abdulali \cite{abdulali}, strong GHC for powers of CM abelian
		surfaces and certain products of CM elliptic curves]
		\label{thm:GHC}
		
		Let $A$ be an abelian variety that is isogenous to either
		\begin{enumerate}[(i)]
			\item $E_1^{k_1} \times E_2^{k_2} \times E_3^{k_3}$ for some CM elliptic
			curves $E_i$, or 
			\item the power of a CM abelian surface $S$.
		\end{enumerate} 
		Let $H \subseteq
		\HH^k(A,\Q)$ be a
		Hodge sub-structure of Hodge level $\leq k-2n$. Then 
		$$ H\subseteq  \sum_B \mathrm{Im} \left({\R}^{\dim B +n}(A\times B) \otimes
		\HH^{k-2n}(B,\Q) \longrightarrow \HH^k(A,\Q)\right),$$
		where $\Gamma\otimes \gamma \mapsto \Gamma^*(\gamma)$ and where the sum runs
		over all abelian varieties $B$.
	\end{thm}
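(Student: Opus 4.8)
The plan is to follow the philosophy of the proof of Theorem~\ref{thm:lefhodge}, but to replace its representation-theoretic core, which is unavailable here: as noted in Remark~\ref{rmk:limit}, the Lefschetz representations of a type-IV simple factor need not be numerically Hodge symmetric, so Weyl's construction cannot be run directly. Instead I would use that $A$ is of CM type, so that $\mathrm{Hdg}(A)$ is a torus $T$. After extending scalars to a number field $L$ that splits $T$ and contains the relevant CM fields, $\HH^1(A,L)$ decomposes into one-dimensional $T$-eigenspaces indexed by CM characters $\chi$, each of Hodge type $(1,0)$ or $(0,1)$; by cup-product $\HH^k(A,L)$, and with it the $\Gal(L/\Q)$-stable subspace $H_L$, decomposes into a sum of character lines, each carrying a single Hodge type $(p(\chi),q(\chi))$. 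In these terms the hypothesis $\ell(H)\le k-2n$ says exactly that every character occurring in $H$ satisfies $p(\chi)\ge n$ and $q(\chi)\ge n$.

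The heart of the matter is to realize this coniveau drop by divisors. Working character by character over $L$, I would write each character $\chi$ occurring in $H$, of Hodge type $(p,q)$ with $p,q\ge n$, as a product $\chi=\delta_1\cdots\delta_n\cdot\chi'$, where each $\delta_i$ is a character of Hodge type $(1,1)$ arising as a component of a class in $\overline{\R}^1(A)=\mathrm{NS}(A)_{\Q}$ and $\chi'$ has Hodge type $(p-n,q-n)$. Since $H(n)$ is effective of CM type, the collection of the $\chi'$ assembles into an effective $\Q$-Hodge structure that embeds into $\HH^{k-2n}(B)$ for a suitable CM abelian variety $B$; the Hodge conjecture---available for products of at most three CM elliptic curves and for powers of a CM abelian surface---makes this embedding algebraic, realized by a correspondence $c$. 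Cupping $c$ with the divisor classes carrying the $\delta_i$ produces $\Gamma\in\CH^{\dim B+n}(A\times B)$ with $H\subseteq\mathrm{Im}(\Gamma^*)$. I would then verify, along the lines of Abdulali's explicit analysis \cite[Examples~2 \& 3]{abdulali}, that in both families $c$ may be taken to be built from graphs of homomorphisms (projections and CM multiplications) and Poincar\'e classes, which lie in $\R^*(A\times B)$ by Proposition~\ref{prop:R}; together with the divisor factors this gives $\Gamma\in\R^{\dim B+n}(A\times B)$. Running the whole construction $\Gal(L/\Q)$-equivariantly and summing over Galois orbits descends everything to $\Q$, yielding the asserted inclusion.

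The main obstacle is precisely the input that forces the restriction to these two families: producing the embedding $c$ inside $\R^*$, that is, out of divisors and graphs of homomorphisms rather than out of arbitrary algebraic cycles, and dually performing the character-level division $\chi=\delta_1\cdots\delta_n\cdot\chi'$ using only divisor characters. For a type-IV factor $\overline{\R}^*$ is in general strictly smaller than the ring of Hodge classes, so exotic Weil classes can intervene; for four pairwise non-isogenous CM elliptic curves they already obstruct the Hodge conjecture, whereas the numerology of at most three elliptic factors, respectively of an abelian surface and its powers, excludes such classes in the relevant degrees and makes the divisor-and-homomorphism construction sufficient. Checking this sufficiency through the CM-type combinatorics, and its compatibility with the Galois action so that the resulting $\Gamma$ is rational and lies in $\R^*$, is where the work concentrates. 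Once the codimension-$n$ support is obtained in this refined form, the passage to the correspondence-theoretic statement of Conjecture~\ref{conj:strongGHC} is automatic, since the standard conjectures hold for CM abelian varieties.
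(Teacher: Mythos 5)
Your overall strategy --- decompose into CM characters, realize the Tate twist $H(n)$ in the cohomology of an auxiliary CM abelian variety $B$, and invoke the Hodge conjecture for $A\times B$ to make the resulting Hodge class algebraic --- is indeed the skeleton of Abdulali's argument. But note that the paper does not reprove Abdulali's theorem: its proof is a citation of \cite{abdulali}, plus the single observation that the auxiliary varieties $B$ which occur ($E_1^{m_1}\times E_2^{m_2}\times E_3^{m_3}$, resp.\ $S^i\times(S')^j$) have Hodge group equal to Lefschetz group, so that \emph{every} Hodge class on $A\times B$ automatically lies in $\overline{\R}^*$ and lifts to $\R^*$ by O'Sullivan's Theorem~\ref{T:Osullivan}. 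That soft observation replaces your entire second paragraph: there is no need to build $\Gamma$ explicitly out of graphs of homomorphisms, Poincar\'e classes and divisors.

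More seriously, the mechanism you propose for realizing the coniveau has a gap I do not think can be closed as stated. You factor each character $\chi$ of type $(p,q)$, $p,q\ge n$, occurring in $H_\C$ as $\delta_1\cdots\delta_n\cdot\chi'$ with each $\delta_i$ a component of a class in $\mathrm{NS}(A)_\Q$. Such a component is a product $\sigma\bar{\tau}$ of an $\HH^1$-character with the complex conjugate of a character of an isogenous factor; extracting $n$ such conjugate pairs from the support of $\chi$ is a real combinatorial constraint that does not follow from $p,q\ge n$ alone (compare Remark~\ref{rmk:limit}, where a type-$(1,1)$ character line on a CM abelian surface is exhibited that is not spanned by a Hodge class, hence is not a component of any divisor class). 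Worse, if your factorization always held, the conclusion would be $H\subseteq\mathrm{Im}\bigl(\overline{\R}^n(A)\otimes\HH^{k-2n}(A,\Q)\to\HH^k(A,\Q)\bigr)$, i.e.\ the analogue of \eqref{eq:inclusion} with no auxiliary variety at all; but the very shape of Abdulali's statement --- in case (ii) with CM field $E$ not Galois over $\Q$ the sum must run over $S^i\times(S')^j$ where $S'$ is a \emph{different} abelian surface, not a factor of $A$ --- shows that $H(n)$ need not occur in the cohomology of $A$ itself, so this stronger conclusion is not available. The correct mechanism uses divisor classes on $A\times B$, in particular Poincar\'e-type classes in $\HH^1(A)\otimes\HH^1(B)$, which give degree-shifting morphisms $\HH^{k-2n}(B)\to\HH^k(A)$ that do not factor through $\HH^{k-2n}(A)$. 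Finally, the step you defer as ``where the work concentrates'' --- verifying through the CM-type combinatorics that $H(n)$ embeds into $\HH^{k-2n}(B)$ for $B$ in the allowed list --- is precisely the content of \cite[Examples 2 \& 3]{abdulali} (together with the general reduction of GHC to HC for CM abelian varieties in \cite{abdulali, hazama3}), and cannot be waved through.
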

	\begin{proof} For a proof, we refer to Abdulali \cite{abdulali}. Let us mention
		that in case $(i)$ the sum can be taken over abelian varieties of the form
		$E_1^{m_1} \times E_2^{m_2} \times E_3^{m_3}$, and in case $(ii)$ over powers
		of
		$S$, unless
		$S$ is an abelian surface
		with CM by a field $E$ not Galois over $\Q$, in which case, denoting $S'$ the
		other abelian surface with CM by $E$, the sum runs through abelian varieties
		of the form $S^i \times (S')^j$.  That the correspondences in the sum can be
		chosen to be in $\R^*$ is due to the fact that for abelian varieties of the
		form
		$E_1^{m_1} \times E_2^{m_2} \times E_3^{m_3}$, or $S^i \times (S')^j$ as
		above,
		the Hodge group coincides with the Lefschetz group, so that all Hodge classes
		on
		$E_1^{m_1} \times E_2^{m_2} \times E_3^{m_3}$ or $S^i \times (S')^j$ belong in
		fact to $\overline{\R}^*$.
	\end{proof}

	\begin{prop}\label{prop:R} We have the following three statements\,:
		\begin{enumerate}[(a)]
			\item Let $A$ be an abelian variety and let $m$ be a positive integer. Then
			any
			symmetrically distinguished cycle on $A^m$ that is generically defined for
			$m$-fold powers of polarized abelian varieties belongs to $\R^*(A^m)$. 
			
			\item
			Let $A$, $B$ and $C$ be abelian varieties, and let $\gamma \in \R^*(A\times
			B)$
			and $\gamma'\in \R^*(B\times C)$ be two correspondences. Then $\gamma'\circ
			\gamma$ belongs to $\R^*(A\times C)$.
			\item Let $f: A\to B$ be a homomorphism of abelian varieties. Then the graph
			$\Gamma_f$ of $f$ belongs to $\R^*(A\times B)$.
		\end{enumerate}
	\end{prop}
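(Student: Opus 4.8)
The plan is to reduce all three statements to their cohomological shadows and then lift, using two inputs valid for \emph{all} abelian varieties: by O'Sullivan's Theorem~\ref{T:Osullivan} a symmetrically distinguished cycle is determined by its class in $\HH^*$ and homologically trivial symmetrically distinguished cycles are rationally trivial; and by Theorem~\ref{thm:lefinv}, $\overline{\R}^*(Y)=\HH^*(Y)^{L(Y)}$ for every power $Y$ of an abelian variety, with $\R^*(Y)\xrightarrow{\sim}\overline{\R}^*(Y)$. This yields the guiding principle I will invoke throughout: \emph{if $z$ is a symmetrically distinguished cycle on $Y$ whose class $[z]$ is $L(Y)$-invariant, then $z\in\R^*(Y)$.} Indeed, lift $[z]\in\overline{\R}^*(Y)$ to $w\in\R^*(Y)\subseteq\DCH^*(Y)$ with $[w]=[z]$; then $z-w$ is symmetrically distinguished and homologically trivial, hence $z=w\in\R^*(Y)$. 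I will also use that pullback along a homomorphism $f$ preserves $\DCH^*$ (O'Sullivan) and carries $\overline{\R}^*$ into $\overline{\R}^*$ (as $f$ commutes with $[-1]$, $f^*$ sends symmetric divisors to symmetric divisors and is a ring map).

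For (a) I would argue by monodromy. Write $z=\mathcal{Z}|_{A^m}$ for $\mathcal{Z}$ a cycle on the $m$-fold fibre power of the universal family over $\mathcal{A}_{g,d,N}$. The restriction to the fibre sends $[\mathcal{Z}]$ into the monodromy invariants of $\HH^*(A^m,\Q)$; since the monodromy preserves the polarization form and is Zariski-dense in $\operatorname{Sp}(\HH^1(A,\Q),\phi_L)$, these invariants equal the invariants for the diagonal $\operatorname{Sp}(\HH^1(A,\Q))$-action on $\HH^*(A^m)$. Because $L(A)\subseteq\operatorname{Sp}(\HH^1(A),\phi_L)$, the $\operatorname{Sp}$-invariants are contained in the $L(A)$-invariants, which by Theorem~\ref{thm:lefinv} are exactly $\overline{\R}^*(A^m)$. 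Thus $[z]\in\overline{\R}^*(A^m)$, and since $z$ is symmetrically distinguished the guiding principle gives $z\in\R^*(A^m)$; note the symmetric-distinguishedness hypothesis enters only in this final lifting step.

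For (b) I would write the composite as $\gamma'\circ\gamma=p_{AC,*}\!\left(p_{AB}^*\gamma\cdot p_{BC}^*\gamma'\right)$ on $A\times B\times C$, all projections being homomorphisms. By O'Sullivan's stability of $\DCH^*$ under pullback, product, and pushforward along homomorphisms, $\gamma'\circ\gamma$ is symmetrically distinguished, so by the guiding principle it suffices to show $[\gamma'\circ\gamma]\in\overline{\R}^*(A\times C)$. Let $L_0$ be the product of the Lefschetz groups of all simple isogeny factors occurring in $A,B,C$; by Murty's Lemma~\ref{lem:murty} each of $L(A\times B),L(B\times C),L(A\times C)$ is the sub-product of $L_0$ on the relevant factors, and factors acting trivially do not affect the invariants, so $[\gamma]$ and $[\gamma']$ are $L_0$-invariant. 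The crux is that the composition law is $L_0$-equivariant: pullback and cup product obviously are, and integration over the $B$-factor (the map $p_{AC,*}$) is the identity tensored with $\int_B\colon\HH^*(B)\to\Q$, which is $L_0$-equivariant because $\HH^{2\dim B}(B)$ is a trivial $L_0$-representation, being spanned by the $L(B)$-invariant class $\tfrac{1}{(\dim B)!}\theta_B^{\dim B}$. Hence $[\gamma'\circ\gamma]$ is $L_0$-invariant, so $L(A\times C)$-invariant.

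For (c) I would first treat the diagonal. The class $[\Delta_B]\in\HH^*(B)\otimes\HH^*(B)$ is the Poincaré-duality tensor, invariant under the diagonal action of any group preserving the pairing; since $L(B\times B)$ acts diagonally through $L(B)\subseteq\operatorname{Sp}(\HH^1(B))$ (Lemma~\ref{lem:murty}), one gets $[\Delta_B]\in\overline{\R}^*(B\times B)$, and as $\Delta_B=\iota_{\Delta,*}[B]$ is symmetrically distinguished the guiding principle yields $\Delta_B\in\R^*(B\times B)$. Then $[\Gamma_f]=(f\times\operatorname{id}_B)^*[\Delta_B]$, with $f\times\operatorname{id}_B$ a homomorphism, so $[\Gamma_f]\in\overline{\R}^*(A\times B)$; since $\Gamma_f=(\operatorname{id}_A,f)_*[A]$ is symmetrically distinguished, $\Gamma_f\in\R^*(A\times B)$. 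I expect the main obstacle to be the equivariance underlying (b): everything hinges on integration over a factor being Lefschetz-equivariant, which rests on the fundamental class being a Lefschetz invariant. This is the one point where one must genuinely use the structure of $\overline{\R}^*$ rather than formal functoriality (it is also what one would feed into a Fourier--Mukai argument if one preferred to handle arbitrary pushforwards uniformly); for (a) the analogous non-formal input is the Zariski-density of the monodromy in the symplectic group.
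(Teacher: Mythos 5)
Your proposal is correct, and its engine is the same as the paper's: reduce each statement to its image in cohomology and lift using the isomorphism $\R^*\cong\overline{\R}^*$ together with the fact that a symmetrically distinguished, homologically trivial cycle vanishes --- your ``guiding principle'' is essentially the first sentence of the paper's proof. The differences lie in how the cohomological memberships are obtained. For (a) the paper specializes to a very general fiber, where the class of a generically defined cycle is a Hodge class, and invokes Hazama's Theorem~\ref{T:gen2} to write it as a polynomial in $p_i^*L$ and $p_{i,j}^*c_1(\mathcal{P}_A)$, an expression that then propagates to every fiber because these generators are themselves generically defined; your monodromy-density argument reaches the same conclusion without Hazama's explicit description of the invariants, needing only $L(A)\subseteq\operatorname{Sp}(\HH^1(A,\Q))$ and Theorem~\ref{thm:lefinv}, which is a legitimate and arguably leaner substitute. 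For (b) the paper's proof is a one-line appeal to the fact that a correspondence lies in $\overline{\R}^*$ if and only if it commutes with the Lefschetz action; your version makes the one genuinely nontrivial point explicit, namely that integration over the $B$-factor is equivariant because $\HH^{2\dim B}(B,\Q)$ is a trivial $L(B)$-representation. For (c) the paper reduces to $\Delta_B\in\R^*(B\times B)$ exactly as you do but then cites Scholl, whereas your direct verification (the K\"unneth components of $[\Delta_B]$ are the Poincar\'e-duality tensors, hence Lefschetz-invariant, and $\Delta_B=\iota_{\Delta,*}[B]$ is symmetrically distinguished) is self-contained. One cosmetic remark: since your preliminary observation already shows that $f^*$ carries $\R^*$ into $\R^*$ on the nose, the detour through $\overline{\R}^*$ in (c) is unnecessary --- $\Gamma_f=(f\times\mathrm{id}_B)^*\Delta_B$ lands in $\R^*(A\times B)$ directly, which is the paper's route.
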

	\begin{proof}
		By O'Sullivan's Theorem~\ref{T:Osullivan}, a symmetrically distinguished cycle
		in $\DCH^*(A)$ whose cohomology class belongs to $\overline{\R}^*(A)$ belongs
		to
		$\R^*(A)$ for any abelian variety $A$. Thus (a) follows from the fact that the cohomology class of a
		generically defined cycle on the $m$-fold power of a polarized abelian variety $A$ belongs to
		$\overline{R}^*(A^m)$\,;  see Theorem
		\ref{T:gen2}. (More precisely,  Hodge classes on $A^m$, with $A$ a very
		general
		polarized abelian variety, consist of polynomials in $p_i^*L$ and
		$p_{i,j}^*c_1(\mathcal{P_A})$, where $L$ is the polarization of $A$.)
		
		For (b), observe that the composition of two correspondences in
		$\overline{\R}^*$ yields a correspondence in $\overline{\R}^*$\,; indeed a
		correspondence belongs to $\overline{\R}^*$ if and only if it commutes with
		the
		action of the Lefschetz group.
		Case (b) then follows from this fact together with  the fact that by
		O'Sullivan's Theorem $\gamma'\circ \gamma$ is symmetrically distinguished
		(since
		$\gamma$ and $\gamma'$ are).
		
		For (c), since $\Gamma_f = (f,\mathrm{id}_B)^*\Delta_B$, it suffices to show
		that $\Delta_B \in \R^*(B\times B)$. This can be found in \cite[\S 5]{scholl}.
	\end{proof}

	As a consequence of Theorems~\ref{thm:lefhodge} and \ref{thm:GHC}, we have the
	following analogue of Proposition~\ref{P:niveau}, which in particular establishes Conjecture~\ref{conj:strongGHC} for Lefschetz sub-representations of certain abelian varieties\,:
	
	\begin{prop}\label{prop:projlef}
		Let $A$ be a complex abelian variety of dimension $g$, and let $H \subseteq
		\HH^k(A,\Q)$ be a
		Lefschetz sub-representation of Hodge level $\leq k-2n$. Assume that $A$ is
		isogenous to $A_0 \times A_1$ with
		\begin{itemize}
			\item $A_0$ isomorphic to $E_1^{k_1} \times E_2^{k_2} \times E_3^{k_3}$ for
			some
			CM elliptic curves $E_i$, or to the power of a CM abelian surface\,; 
			\item $A_1$ isomorphic to an abelian variety of totally real type (\emph{cf.}
			Definition~\ref{def:totreal}).
		\end{itemize} 
		Then there exists an idempotent correspondence $p_H \in \R^g(A\times A)$
		inducing the  projection
		$\HH^*(A,\Q) \to H \to \HH^*(A,\Q)$, which is a
		linear combination of correspondences of the form $$\mathfrak{h}(A)
		\stackrel{\rho}{\longrightarrow} \mathfrak{h}(B)(n) 
		\stackrel{\zeta}{\longrightarrow} \mathfrak{h}(A),$$ for some abelian
		varieties $B$ and some correspondences $\rho$ and
		$\zeta$ that belong to $\R^*(A\times B)$ and $\R^*(B\times A)$, respectively. 
		
	\end{prop}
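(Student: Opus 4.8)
The plan is to reduce to the product $A_0\times A_1$, split the Lefschetz representation into a ``CM part'' and a ``totally real part'' by Murty's Lemma~\ref{lem:murty}, treat each part by the appropriate support theorem (Abdulali's Theorem~\ref{thm:GHC} and Theorem~\ref{thm:lefhodge}), and then recombine by external products, extracting the idempotent by the polarization/Cayley--Hamilton argument of Proposition~\ref{P:niveau}. First I would reduce to the case $A = A_0\times A_1$: an isogeny $A\to A_0\times A_1$ has graph in $\R^*$ by Proposition~\ref{prop:R}(c), and conjugating by it (using Proposition~\ref{prop:R}(b)) identifies the Lefschetz groups, the algebras $\R^*$, and the notion of factorization, so it transports any factorization on $A_0\times A_1$ back to $A$. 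Since $A_0$ is of type IV while $A_1$ is of type I, II or III, the simple isogeny factors of $A_0$ and $A_1$ are disjoint, so Murty's Lemma~\ref{lem:murty} gives $L(A)=L(A_0)\times L(A_1)$ with $L(A_i)$ acting on $\HH^1(A_i,\Q)$ and trivially on the other factor. Hence the K\"unneth decomposition $\HH^k(A,\Q)=\bigoplus_{a+b=k}\HH^a(A_0,\Q)\otimes\HH^b(A_1,\Q)$ is $L(A)$-stable, and by complete reducibility I may write $H$ as a finite direct sum of $L(A)$-irreducible sub-representations, each of the form $H_0\otimes H_1$ with $H_0\subseteq\HH^a(A_0,\Q)$ an irreducible $L(A_0)$-representation and $H_1\subseteq\HH^b(A_1,\Q)$ an irreducible $L(A_1)$-representation, $a+b=k$. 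Because $\ell(H_0\otimes H_1)=\ell(H_0)+\ell(H_1)$ (the two extreme Hodge components of a tensor product can be aligned on the same side, which gives the lower bound, while the upper bound is immediate), each such summand has level $\leq k-2n$; writing $n_0:=(a-\ell(H_0))/2$ and $n_1:=(b-\ell(H_1))/2$ for the exact coniveaux, this reads $n_0+n_1\geq n$.

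Next I would produce, for each summand, an idempotent correspondence in $\R^*$ projecting onto $H_0\otimes H_1$ and factoring through $\mathfrak{h}(B)(n)$. For the totally real factor, Theorem~\ref{thm:lefhodge} gives $H_1\subseteq\mathrm{Im}\big(\overline{\R}^{n_1}(A_1)\otimes\HH^{b-2n_1}(A_1)\to\HH^b(A_1)\big)$; factoring each symmetric-divisor monomial of codimension $n_1$ as a product of ones of codimension $n_1-n_1'$ and $n_1'$ shows that this image lies inside the analogous image built from $\overline{\R}^{n_1'}(A_1)$ for any $n_1'\leq n_1$, so $H_1$ is supported in codimension $n_1'$ by $\R^*$-correspondences. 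Composing the resulting cup-product correspondence $\Gamma_1:\mathfrak{h}(A_1^{\oplus})(n_1')\to\mathfrak{h}(A_1)$ with the $\R^*$-projector onto $H_1$ furnished by Lemma~\ref{lem:projlef} yields a correspondence whose cohomological image is exactly $H_1$; then, exactly as in Proposition~\ref{P:niveau}, I form $\Gamma_1\circ s\circ\Lambda\circ{}^t\Gamma_1\circ\Lambda_{A_1}^b$ (all the operators $\Lambda_{A_1}^\bullet$, the sign operator $s$, and the Chow--K\"unneth projectors lie in $\R^*$) and use the polarization on $\HH^{b-2n_1'}(A_1)$ together with Cayley--Hamilton to express the orthogonal projector onto $H_1$ as a polynomial without constant term in this endomorphism. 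Such a polynomial still factors through $\mathfrak{h}(A_1^{\oplus})(n_1')$ and lies in $\R^*$ by Proposition~\ref{prop:R}(b). The CM factor is handled identically, replacing Theorem~\ref{thm:lefhodge} by Abdulali's Theorem~\ref{thm:GHC}, whose correspondences may be taken in $\R^*$ since $\mathrm{Hdg}=L$ on the relevant varieties: this gives an $\R^*$-factorization of the orthogonal projector onto $H_0$ through $\mathfrak{h}(B_0)(n_0')$ for some abelian variety $B_0$ and any $n_0'\leq n_0$.

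Finally I would recombine. Choosing $n_0'\leq n_0$ and $n_1'\leq n_1$ with $n_0'+n_1'=n$ (possible since $n_0+n_1\geq n$ and each $n_i\geq 0$), the external product of the two factorizations is an $\R^*$-idempotent projecting onto $H_0\otimes H_1$ and factoring through $\mathfrak{h}(B_0\times A_1)(n_0'+n_1')=\mathfrak{h}(B_0\times A_1)(n)$, using that external products of $\R^*$-classes and of $\R^*$-correspondences remain in $\R^*$. Summing over the finitely many irreducible summands of $H$ produces a correspondence $q\in\R^g(A\times A)$, a linear combination of the required factored form, whose cohomological action is the orthogonal projector onto $H$. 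Since $H$ is a Lefschetz sub-representation, Lemma~\ref{lem:projlef} provides an idempotent $p_H\in\R^g(A\times A)$ with image $H$; as $\R^*$ injects into cohomology by O'Sullivan's Theorem~\ref{T:Osullivan} and both $q$ and $p_H$ induce the same orthogonal projector, one gets $q=p_H$ in $\R^*$, which is the assertion.

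The main obstacle is the bookkeeping of the last two steps. One must arrange the Tate twist to be exactly $n$ rather than the genuine, possibly larger coniveau $n_0+n_1$: this forces the distribution $n=n_0'+n_1'$ and the coniveau-lowering of each factor by divisor cupping, and it is essential here that lowering the coniveau stays within effective motives of \emph{abelian} varieties (a summand supported in codimension $>n$ is not a summand of $\mathfrak{h}(B)(n)$, so one genuinely realises codimension $n$ afresh rather than discarding effectivity). One must also run the Cayley--Hamilton extraction so that the resulting idempotents are the polarization-orthogonal projectors, so that the external products over the different summands are mutually orthogonal and assemble to the single projector of Lemma~\ref{lem:projlef}.
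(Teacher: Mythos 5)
Your overall architecture --- reduce to $A_0\times A_1$ via Proposition~\ref{prop:R}, split the Lefschetz representation into its CM and totally real parts using Murty's Lemma~\ref{lem:murty}, feed each part into Theorem~\ref{thm:GHC} resp.\ Theorem~\ref{thm:lefhodge}, recombine by external products, and extract the idempotent by the Cayley--Hamilton mechanism of Proposition~\ref{P:niveau} --- is the paper's, and your final recombination and idempotent-extraction steps are sound. The gap is at the decomposition step: you assert that each irreducible $L(A)$-sub-representation of $\HH^{a}(A_0,\Q)\otimes\HH^{b}(A_1,\Q)$ has the form $H_0\otimes H_1$ with $H_0$, $H_1$ irreducible \emph{rational} representations of $L(A_0)$, $L(A_1)$. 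This is false over $\Q$: an irreducible $\Q$-representation of a product of reductive $\Q$-groups need not be a tensor product of rational irreducibles; it only becomes a sum of Galois conjugates of tensor products after extending scalars to $\C$. Concretely, for two non-isogenous CM elliptic curves $E_1,E_2$, the space $\HH^1(E_1,\Q)\otimes\HH^1(E_2,\Q)$ contains a $2$-dimensional irreducible $L(E_1)\times L(E_2)$-sub-representation (spanned over $\C$ by $V_1\otimes V_2^\vee$ and $V_1^\vee\otimes V_2$), which cannot be a rational tensor product because the only irreducible rational sub-representation of each $\HH^1(E_i,\Q)$ is the full $2$-dimensional space. Everything downstream in your argument depends on having these rational factors, so the proof does not go through as written.

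This is precisely the ``slight subtlety'' the paper flags, following \cite[Prop.~4]{abdulali2}. The remedy is to complexify: take an irreducible $L(A_0\times A_1)_\C$-constituent $V_0\otimes_\C V_1$ of $H_\C$, whose Galois conjugates span $H_\C$; let $W_0$ be the rational span of the conjugates of $V_0$, a genuine $L(A_0)$-sub-representation of $\HH^{k_0}(A_0,\Q)$ to which Abdulali's theorem applies; and control the level via $\ell(H)=\max_\sigma \ell(V_0^\sigma\otimes V_1^\sigma)=\ell(W_0)+\ell(V_1)$. That last equality is not formal: for complex factors that are not numerically Hodge symmetric one can have $\ell(U\otimes V)<\ell(U)+\ell(V)$ (see the example in Remark~\ref{rmk:limit}), and your parenthetical ``the two extreme Hodge components can be aligned on the same side'' silently assumes Hodge symmetry of at least one factor. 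It is the strengthened complex-coefficient form \eqref{eq:strong} of Theorem~\ref{thm:lefhodge}, guaranteeing that $V_1$ and all its Galois conjugates are numerically Hodge symmetric of the same level, that legitimizes the alignment and pins down the correct coniveau $n_0$ for the rational Hodge structure $W_0$; one must also arrange the correspondences $\Gamma_{r,1}$ to be defined over $\Q$ so that a single rational correspondence supports all Galois conjugates $V_1^\sigma$ at once. With your decomposition step replaced by this Galois-conjugation argument, the rest of your proposal (the redistribution $n=n_0'+n_1'$, the Cayley--Hamilton extraction within $\R^*$, and the identification with the projector of Lemma~\ref{lem:projlef} via the injectivity in O'Sullivan's Theorem~\ref{T:Osullivan}) matches the paper's proof.
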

	\begin{proof} First we show a strong version of the generalized Hodge
		conjecture for Lefschetz sub-representations of $A$\,; namely, we show that
		\begin{equation}\label{eq:prel}
			H\subseteq  \sum_B \mathrm{Im} \left({\R}^{\dim B +n}(A\times B) \otimes
			\HH^{k-2n}(B,\Q) \longrightarrow \HH^k(A,\Q)\right),
		\end{equation}
		where $\Gamma\otimes \gamma \mapsto \Gamma^*(\gamma)$ and where the sum runs
		over all abelian varieties $B$. As outlined after the proof of 
		\cite[Prop.~4]{abdulali2} in the context of Hodge sub-structures, there is a
		slight subtlety\,: one needs to use the stronger statement of Theorem
		\ref{thm:lefhodge} described in its proof, namely, that for $H_1 \subseteq
		\HH^{k_1}(A_1,\C)$ a
		$L(A_1)_\C$-sub-representation of level\footnote{See the proof of
			Theorem~\ref{thm:lefhodge} for the notion of \emph{level} of a
			$L(A_1)_\C$-sub-representation.} $\leq k_1-2n_1$,
		we have that $H_1$ is
		numerically Hodge symmetric and that 
		\begin{equation}\label{eq:H1}
			H_1\subseteq  \mathrm{Im} \left(\overline{\R}^{n_1}(A_1)_\C \otimes
			\HH^{k_1-2n_1}(A_1,\C)
			\stackrel{\cup}{\longrightarrow} \HH^{k_1}(A_1,\C)\right).
		\end{equation} 
		
		Let $H$ be an irreducible  Lefschetz sub-representation of 	$\HH^k(A_0\times
		A_1,\Q)$. Then $H$ is a sub-representation of $L(A_0\times A_1) \cong
		L(A_0)\times L(A_1)$ acting on some K\"unneth component $\HH^{k_0}(A_0,\Q)
		\otimes
		\HH^{k_1}(A_1,\Q)$ for some $k_0+k_1=k$. Let then $V$ be an irreducible
		sub-representation of $L(A_0\times A_1)_\C$ acting on $H_\C$. It is of the
		form
		$V_0 \otimes_\C V_1$ for some $L(A_0)_\C$-sub-representation $V_0 \subseteq
		\HH^{k_0}(A_0,\C)$ and  some $L(A_1)_\C$-sub-representation $V_1 \subseteq
		\HH^{k_1}(A_1,\C)$. Moreover  the Galois conjugates of
		$V_0\otimes_\C V_1$ span $H_\C$\,; indeed, the span is defined over $\Q$ and
		defines a non-trivial sub-representation of the irreducible $L(A_0\times
		A_1)$-representation $H$.  The subspace spanned by the Galois conjugates of
		$V_0$ inside
		$\HH^{k_0}(A_0,\C)$ is defined over $\Q$\,; we denote it $W_0$. Then $W_0$ is
		a
		$L(A_0)$-sub-representation of $\HH^{k_0}(A_0,\Q)$. 
		
		We note from Theorem~\ref{thm:lefhodge} and its proof that $V_1$ and its Galois
		conjugates $V_1^\sigma$ are Hodge symmetric of same level. We find
		\begin{align*}
			\ell(H) &= \max_\sigma \ell(V_0^\sigma \otimes V_1^\sigma)\\
			&= \max_\sigma \left( \ell(V_0^\sigma) + \ell(V_1^\sigma)\right)\\
			&= \max_\sigma \ell(V_0^\sigma) + \ell(V_1)\\
			&= \ell(W_0) + \ell(V_1).
		\end{align*}
		Here the maximum is taken over all elements $\sigma \in
		\operatorname{Aut}_\Q(\C)$, and the second equality holds because $V_1^\sigma$
		is Hodge symmetric.
		Let us then write $$\ell(W_0) = k_0 - 2n_0\quad \mbox{and}\quad \ell(V_1) = k_1
		- 2n_1, \quad \mbox{for } n_1+n_2=n.$$
		
		By the above \eqref{eq:H1}, there are an integer $s$ and correspondences
		$\Gamma_{r,1} \in
		\R^*(A_1 \times A_1)_\C$, $1\leq r\leq s$, such that 
		$$V_1 \subseteq \sum_r \Gamma_{r,1,*} \HH^{k_1-2n_1}(A_1,\C).$$
		Since each $\Gamma_{r,1}$ is a $\C$-linear combination of elements in
		$\R^*(A_1\times A_1)$, up to increasing $s$, we may assume that each
		$\Gamma_{r,1}$ is in fact in the image of $\R^*(A_1\times A_1)
		\hookrightarrow \R^*(A_1 \times A_1)_\C$, so that for every $\sigma \in
		\mathrm{Aut}(\C)$, we have 
		$$V_1^\sigma \subseteq \sum_r \Gamma_{r,1,*} \HH^{k_1-2n_1}(A_1,\C).$$
		On the other hand, there are finitely many non-zero correspondences
		$\Gamma_{B,0} \in \R^*(B\times A_0)$ indexed by abelian varieties $B$, such
		that
		$$W_0 \subseteq \sum_B \Gamma_{B,0,*} \HH^{k_0-2n_0}(B,\Q).$$
		Since $V_0\subseteq W_{0,\C}$,	we have 
		\begin{align*}
			H_\C &= \sum_{\sigma \in \mathrm{Aut}(\C)} (V_0\otimes_\C V_1)^\sigma
			\subseteq \sum_{\sigma,\tau \in \mathrm{Aut}(\C)} (V_0^\sigma \otimes_\C
			V_1^\tau) = W_{0,\C}\otimes\sum_{\tau \in \mathrm{Aut}(\C)} V_1^\tau \\
			&\subseteq  \big( \sum_B \Gamma_{B,0,\C,*}\HH^{k_0-2n_0}(B,\C)\big) \otimes
			\big(\sum_{ r} \Gamma_{r,1,\C,*}\HH^{k_1-2n_1}(A_1,\C)\big) \\
			&\subseteq \sum_{B,r} \big(\Gamma_{B,0}\otimes
			\Gamma_{r,1}\big)_{\C,*}\HH^{k_0+k_1-2(n_0+n_1)}(B\times A_1,\C).
		\end{align*}	
		This establishes \eqref{eq:prel}.
		
		Now, since $H$ is a Lefschetz sub-representation of $\HH^k(A,\Q)$,
		there exists by Lemma~\ref{lem:projlef} an idempotent  $\pi_H \in \R^g(A\times
		A)$ such that $(\pi_H)_*\HH^k(A,\Q) = H$. Composing $\pi_H$  with the
		correspondence $\sum \Gamma_{B,0}\otimes
		\Gamma_{r,1}$, we see that 
		$$H = \Big(\pi_H \circ  \sum_{B,r} \Gamma_{B,0}\otimes
		\Gamma_{r,1}\Big)_* \HH^{k-2n}(B\times A_1,\Q).$$
		In order to conclude the proof of Proposition~\ref{prop:projlef}, we observe
		that we may proceed as in the  proof of Proposition~\ref{P:niveau}\,; indeed,
		all the correspondences appearing there are compositions of correspondences in
		$\R^*$, and therefore thanks to Proposition~\ref{prop:R} belong to
		$\R^*$. 
	\end{proof}

	As a corollary,
	let us mention the
	following result, \emph{cf.} \cite[8.1(2)]{abdulali3}.
	
	\begin{cor}[strong GHC for self-powers of elliptic curves, or abelian
		surfaces]\label{cor:GHCsurface}
		Let $A$ be an abelian variety of dimension $\leq 2$, and let $m$ be a positive
		integer. 
		Let $H \subseteq
		\HH^k(A^m,\Q)$ be a
		Hodge sub-structure of Hodge level $\leq k-2n$. Then 
		$$ H\subseteq  \sum_B \mathrm{Im} \left({\R}^{\dim B +n}(A^m\times B) \otimes
		\HH^{k-2n}(B,\Q) \longrightarrow \HH^k(A^m,\Q)\right),$$
		where $\Gamma\otimes \gamma \mapsto \Gamma^*(\gamma)$ and where the sum runs
		over all abelian varieties $B$.
	\end{cor}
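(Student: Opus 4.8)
The plan is to deduce Corollary~\ref{cor:GHCsurface} directly from Proposition~\ref{prop:projlef}. The latter concerns \emph{Lefschetz} sub-representations, whereas the corollary concerns arbitrary Hodge sub-structures; the bridge between the two is the classical fact that for an abelian variety $A$ of dimension $\leq 2$ the Hodge group and the Lefschetz group coincide. Granting this, every Hodge sub-structure of $\HH^k(A^m,\Q)$ is automatically a Lefschetz sub-representation, and it then suffices to verify that $A^m$ has the shape required by Proposition~\ref{prop:projlef}.

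First I would reduce the group theory from $A^m$ to $A$. On the one hand $V(A^m)=V(A)^{\oplus m}$ with the Hodge cocharacter acting diagonally, so $\mathrm{Hdg}(A^m)=\mathrm{Hdg}(A)$; on the other hand, writing $A$ up to isogeny as $\prod_i A_i^{m_i}$ with the $A_i$ simple and pairwise non-isogenous, Murty's Lemma~\ref{lem:murty} gives $L(A^m)=\prod_i L(A_i)=L(A)$ since $A^m\sim\prod_i A_i^{mm_i}$. Thus it is enough to know $\mathrm{Hdg}(A)=L(A)$.

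Next I would invoke the equality $\mathrm{Hdg}(A)=L(A)$ for $\dim A\leq 2$. By Theorem~\ref{thm:lefinv} this is equivalent to the assertion that every Hodge class on every power of $A$ lies in $\overline{\R}^*$ (using that $-\mathrm{id}_{V(A)}\in\mathrm{Hdg}(A)$ kills invariants in odd tensor degrees, so that agreement of $\mathrm{Hdg}$- and $L$-invariants in even degree forces equality of the two reductive groups); the first examples where this fails occur in dimension~$4$. In dimension $\leq 2$ it follows from the Albert classification: an abelian surface has no simple factor of type~III (the requirement that $2f$ divide $g$ strictly is incompatible with $g=2$), and in each remaining case---elliptic curves, whether or not they have complex multiplication, and simple surfaces of type~I, II, or~IV---one checks the coincidence of the two groups case by case. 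Since $\mathrm{Hdg}(A^m)=\mathrm{Hdg}(A)=L(A)=L(A^m)$, any Hodge sub-structure $H\subseteq\HH^k(A^m,\Q)$, being stable under $\mathrm{Hdg}(A^m)$, is stable under $L(A^m)$, i.e. is a Lefschetz sub-representation.

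Finally I would match $A^m$ to the hypotheses of Proposition~\ref{prop:projlef} by inspecting the isogeny type of $A$: if $A$ is a simple abelian surface of type~IV then $A^m$ is a power of a CM abelian surface ($A_0=A^m$, $A_1=0$); if $A$ is an elliptic curve or an abelian surface isogenous to a product of elliptic curves, then $A^m$ is isogenous to a product of powers of elliptic curves, and collecting the CM factors into $A_0$ and the non-CM ones (which are of type~I, hence of totally real type) into $A_1$ exhibits the required splitting; and if $A$ is a simple abelian surface of type~I or~II then $A^m$ is itself of totally real type ($A_0=0$, $A_1=A^m$). In every case $A^m\sim A_0\times A_1$ as in Definition~\ref{def:totreal} and Proposition~\ref{prop:projlef}, so applying the inclusion~\eqref{eq:prel} established in the proof of that proposition to the Lefschetz sub-representation $H$ yields exactly the claimed inclusion. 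The only non-formal ingredient is the equality $\mathrm{Hdg}=L$ in dimension $\leq 2$; the rest is bookkeeping.
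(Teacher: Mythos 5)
Your proof is correct and follows essentially the same route as the paper: the key input in both is the coincidence $\mathrm{Hdg}(A)=L(A)$ in dimension $\leq 2$, which converts arbitrary Hodge sub-structures into Lefschetz sub-representations, after which Proposition~\ref{prop:projlef} (equivalently, Theorems~\ref{thm:lefhodge} and~\ref{thm:GHC}) applies. The one point where you are quicker than the paper is the mixed case $A\sim E\times E'$ with $E$ without CM and $E'$ with CM, where the equality $\mathrm{Hdg}(A)=L(A)$ is not a formal consequence of the simple cases (your ``case by case'' list omits non-simple surfaces) and the paper explicitly cites Moonen--Zarhin \cite{MZ} for it.
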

	\begin{proof}
		The case where $A$ has CM was covered in Abdulali's Theorem~\ref{thm:GHC}, while the case
		where $A$ is without CM is covered by Theorem~\ref{thm:lefhodge} (recall that
		in
		these cases, $\mathrm{Hdg}(A) = L(A)$). Thus it only remains to treat the case
		where $A = E\times E'$, where $E$ is an elliptic curve without CM and $E'$ is
		an
		elliptic curve with CM. In that case, we still have $\mathrm{Hdg}(A) = L(A)$
		(see e.g. \cite{MZ}) and one concludes with Proposition~\ref{prop:projlef}.
	\end{proof}

	\subsection{Lefschetz representations and the generalized Bloch conjecture} We
	are now in a position to prove the theorem announced in \S \ref{sec:lefintro} of  the introduction.
	
	\begin{thm}\label{T:mainLef}
		Let $A$ and $A'$ be two abelian varieties, and let $\gamma$ be a cycle in
		$\mathrm{R}^*(A\times A')$.   Assume that $A$ is isogenous to $A_0 \times A_1$
		with
		\begin{itemize}
			\item $A_0$ isomorphic to $E_1^{k_1} \times E_2^{k_2} \times E_3^{k_3}$ for
			some CM elliptic curves $E_i$, or to the power of a CM abelian surface\,; 
			\item $A_1$ isomorphic to an abelian variety of totally real type (\emph{cf.}
			Definition~\ref{def:totreal}).
		\end{itemize} 
		If
		$\gamma^*\HH^{i,j}(A')=0$ for all $j<n$, then
		$\gamma_*\CH_r(A) = 0$ for all $r<n$.
	\end{thm}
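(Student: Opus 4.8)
\emph{Plan of proof.} The plan is to follow the template of the proof of Theorem~\ref{T:main2}, with Proposition~\ref{prop:projlef} playing the role that Proposition~\ref{P:niveau} plays there, and with O'Sullivan's Theorem~\ref{T:Osullivan} replacing Kimura finite-dimensionality at the final step. Throughout, set $g:=\dim A$.

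First I would study the image $H:=\gamma^{*}\HH^{*}(A',\Q)\subseteq \HH^{*}(A,\Q)$. Because $\gamma\in\R^{*}(A\times A')$, its cohomology class is $L(A\times A')$-invariant by Theorem~\ref{thm:lefinv} applied to the abelian variety $A\times A'$; since the action of $L(A\times A')$ on $\HH^{*}(A,\Q)$ factors through the surjective projection $L(A\times A')\to L(A)$ supplied by Murty's Lemma~\ref{lem:murty}, the morphism $\gamma^{*}$ is $L(A)$-equivariant and hence $H$ is a Lefschetz sub-representation of $\HH^{*}(A,\Q)$. The hypothesis $\gamma^{*}\HH^{i,j}(A')=0$ for $j<n$ says precisely that $H$ has no Hodge component in bidegree $(p,q)$ with $q<n$, and hence, by Hodge symmetry, none with $p<n$; thus its weight-$k$ piece $H_{k}:=H\cap\HH^{k}(A,\Q)$ has Hodge level $\le k-2n$, i.e.\ $H_{k}\subseteq \mathrm{N}_H^{n}\HH^{k}(A,\Q)$.

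Next, since $A$ is isogenous to a product $A_{0}\times A_{1}$ of the shape required by Proposition~\ref{prop:projlef}, I would apply that proposition to each $H_{k}$: it produces an idempotent $p_{H_{k}}\in\R^{g}(A\times A)$ inducing the projector onto $H_{k}$ and expressible as a linear combination of correspondences of the form $\mathfrak{h}(A)\xrightarrow{\rho}\mathfrak{h}(B)(n)\xrightarrow{\zeta}\mathfrak{h}(A)$ with $\rho,\zeta\in\R^{*}$. Summing over $k$ gives an idempotent $p_{H}\in\R^{g}(A\times A)$ with image $H$ and the same factorization property. Writing $\delta:={}^{t}\gamma\in\R^{*}(A'\times A)$ (the transpose again lies in $\R^{*}$, as swapping the two factors preserves symmetric divisors), its covariant cohomological action $\delta_{*}$ coincides with $\gamma^{*}$ and so has image $H$; therefore $p_{H}\circ\delta$ and $\delta$ induce the same homomorphism in cohomology. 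By Proposition~\ref{prop:R}(b) both $\delta$ and $p_{H}\circ\delta$ lie in $\R^{*}(A'\times A)\subseteq\DCH^{*}(A'\times A)$, so their difference is symmetrically distinguished and homologically trivial, whence rationally trivial by Theorem~\ref{T:Osullivan}; that is, $p_{H}\circ\delta=\delta$ in $\CH^{*}(A'\times A)$. Transposing gives $\gamma=\gamma\circ{}^{t}p_{H}$ in $\CH^{*}(A\times A')$. Exactly as in the proof of Theorem~\ref{T:main2}, transposition turns the factorization of $p_{H}$ through $\mathfrak{h}(B)(n)$ into a factorization of ${}^{t}p_{H}$ through $\mathfrak{h}(B)(-n)$, so that the action of ${}^{t}p_{H}$ on $\CH_{r}(A)$ factors through a finite direct sum of groups $\CH_{r-n}(B)$. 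Since $\gamma_{*}=\gamma_{*}\circ({}^{t}p_{H})_{*}$ and $\CH_{r-n}(B)=0$ for $r<n$, I would conclude $\gamma_{*}\CH_{r}(A)=0$ for all $r<n$.

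Most of the genuine difficulty is hidden in the two inputs, Proposition~\ref{prop:projlef} (the strong generalized Hodge conjecture for Lefschetz sub-representations of $A$) and O'Sullivan's theorem. In assembling them the one point requiring care is the Tate-twist bookkeeping: it is the passage to the transpose ${}^{t}p_{H}$, rather than $p_{H}$ itself, that yields the $(-n)$-twist needed for the Chow group $\CH_{r-n}(B)$ to vanish. It is also crucial that \emph{all} cycles in sight are symmetrically distinguished, so that O'Sullivan's theorem upgrades the cohomological identity $p_{H}\circ\delta\equiv\delta$ to an honest identity of cycles; this is what produces the exact vanishing $\gamma_{*}\CH_{r}(A)=0$ rather than the mere nilpotence of the induced operators that a Kimura-type argument would afford.
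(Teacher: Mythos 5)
Your proof is correct and takes essentially the same route as the paper's: identify $H=\gamma^{*}\HH^{*}(A',\Q)$ as a Lefschetz sub-representation of Hodge coniveau $\geq n$, invoke Proposition~\ref{prop:projlef} for the idempotent $p_H\in\R^{g}(A\times A)$ factoring through $\bigoplus_B\mathfrak{h}(B)(n)$, use O'Sullivan's Theorem~\ref{T:Osullivan} to upgrade the cohomological identity to one in $\CH^{*}$, and conclude from the vanishing of $\CH_{r-n}(B)$ for $r<n$. The only (harmless) difference is that you carry out the transpose bookkeeping explicitly via $\delta={}^{t}\gamma$ and ${}^{t}p_H$, where the paper writes $\gamma=\gamma\circ p_H$ directly.
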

	\begin{proof}
		Since $\gamma$ is a cycle in
		$\mathrm{R}^*(A\times A')$, we have that $\gamma^*\HH^*(A',\Q)$ is a Lefschetz
		sub-representation $H$ of $\HH^*(A,\Q)$. By the assumption
		$\gamma^*\HH^{i,j}(A')=0$ for all $j<n$ and by Proposition~\ref{prop:projlef},
		we
		see that, modulo homological equivalence, $\gamma =  \gamma \circ p_H$ is a
		linear combination of cycles in $\R^*(A\times A')$ that factor as
		$$\mathfrak{h}(A)
		\stackrel{\rho}{\longrightarrow} \mathfrak{h}(B)(n) 
		\stackrel{\zeta}{\longrightarrow} \mathfrak{h}(A'),$$ 
		for some abelian
		varieties $B$ and some correspondences $\rho$ and
		$\zeta$ that belong to $\R^*(A\times B)$ and $\R^*(B\times A')$, respectively.
		Since all the correspondences involved belong to $\R^*(-)$, O'Sullivan's
		Theorem
		\ref{T:Osullivan} tells us that the latter in fact holds modulo rational
		equivalence. It follows that $\gamma$ factors through a morphism 
		$\mathfrak{h}(A) \to \bigoplus_B \mathfrak{h}(B)(n)$, where the direct sum
		runs
		through the abelian varieties that appeared above. In particular, the map
		$\gamma_* : \CH_r(A) \to \CH_{*}(A')$ factors through a map $\CH_r(A) \to
		\bigoplus_B \CH_{r-n}(B)$, and hence $\gamma_* : \CH_r(A) \to \CH_{*}(A')$ is
		zero for $r<n$.
	\end{proof}

	\begin{rmk}\label{R:mainLef}
		In the case where $A$ is isogenous to the power of an abelian variety of
		dimension $\leq 2$, we will use Corollary \ref{cor:GHCsurface} to prove in Theorem
		\ref{T:app} that
		if  $\gamma$ is a cycle in
		$\mathrm{CH}^*(A\times A)$ such that
		$\gamma^*\HH^{i,j}(A)=0$ for all $j<n$, then
		$\gamma_*$ acts nilpotently on $\CH_r(A)$ for all $r<n$.
	\end{rmk}

	\section{Applications}\label{S:ex}
	
	The simplest form of Bloch's conjecture predicts that if a smooth projective
	complex variety $X$ satisfies $h^{i,0}(X) = 0$ for all positive integers $i$,
	then $\CH_0(X) = \Q$. If now $S$ is a smooth projective complex surface that
	satisfies  $h^{1,0}(S) = 0$  and $h^{2,0}(S) = 1$, then since
	$\bigwedge^2h^{2,0}(S) = 0$ one would expect that $a\times b = b\times a$ in $\CH_0(S\times S)$ for all zero-cycles $a, b\in \CH_0(S)_{\mathrm{num}}$, where $\CH_0(S)_{\mathrm{num}}$
	denotes
	the zero-cycles of degree zero. This expectation was studied by Voisin in
	\cite{voisin0} who conjectured it for K3 surfaces, and established it for
	Kummer
	surfaces and for a certain 10-dimensional family of K3 surfaces\,; see also
	\cite{laterveer1, laterveer2}. Another prediction of Bloch's conjecture is the
	following. Let $f:X\to X$ be an automorphism of a smooth projective variety such
	that $f^*$ acts as the identity on $\HH^0(\Omega_X^i)$ for all~$i$\,; then $f$
	should act unipotently on $\CH_0(X)$. This was checked for finite-order
	automorphisms of K3 surfaces by Voisin \cite{voisink3} and Huybrechts
	\cite{huybrechts}.
	
	In this section, we  answer questions of that
	type for curves, abelian varieties, Kummer surfaces and generalized Kummer varieties.
	In \S \S \ref{sec:0ab}, \ref{s:lin} and \ref{s:voisin}, we use our results on
	generically defined cycles, while in \S \S \ref{sec:motab} and
	\ref{sec:symplecto}, we use the strong form of the generalized Hodge conjecture
	for powers of abelian surfaces.

	\subsection{Symmetric and skew-symmetric cycles  on powers of curves or of abelian varieties}\label{sec:0ab}
	Recall from Shermenev \cite{shermenev} and Deninger--Murre \cite{dm} that the Chow motive of an abelian variety $A$ of dimension $g$ admits a weight decomposition 
	$$\mathfrak{h}(A) = \bigoplus_{i=0}^{2g} \mathfrak{h}^i(A)$$ with the property that 
	\begin{equation}\label{eq:eigen}
\CH^j(\mathfrak{h}^{i}(A)) = \{a\in \CH^i(A) : [n]^*a = n^{i}a \ \text{for all} \ n\in \Z\},
	\end{equation}
	 where $[n]:A\to A$ is the multiplication-by-$n$ homomorphism, and the property that the diagonal embedding $A\hookrightarrow A^i$ induces a canonical isomorphism
	$$\mathfrak{h}^i(A) \cong \mathrm{S}^i \mathfrak{h}^1(A),$$
where the right-hand term denotes the $i$-th symmetric power of the motive $\mathfrak{h}^1(A)$, seen as a direct summand of the motive of $A^i$.
\medskip

	The following result generalizes to integers $i\neq g$ a result of Voisin
	\cite[Example 4.40]{voisinbook}. Note that in the proof of \emph{loc. cit.},
	one
	has to check that $\sigma$ sends $\mathfrak{h}^g(A)\otimes
	\mathfrak{h}^g(A)$ into $\mathfrak{h}^g(A)\otimes \mathfrak{h}^g(A)$ (\emph{a priori}
	$\sigma$ sends $\mathfrak{h}^g(A)\otimes \mathfrak{h}^g(A)$ into
	$\mathfrak{h}^{2g}(A\times A) = \bigoplus_{i}\mathfrak{h}^i(A)\otimes
	\mathfrak{h}^{2g-i}(A)$).
	\begin{thm}\label{T:sym}
		Let $A$ be an abelian variety of dimension $g$. Let $i$ be a nonnegative
		integer.
		\begin{itemize}
			\item For $i$ odd, we have $\CH_0\left( \mathrm{S}^{N}\mathfrak{h}^{2g-i}(A)
			\right) = 0$ for all $N > \binom{g}{i}$.
			\item For $i$ even, we have $\CH_0\left( \bigwedge^{N}\mathfrak{h}^{2g-i}(A)
			\right) = 0$ for all $N > \binom{g}{i}$.
		\end{itemize}
		In particular, if $N > \binom{g}{i}$ and if $a_j$, $1\leq j \leq N$, are
		zero-cycles on $A$ such that $[n]_*a_j = n^i a_j$ for all integers $n$, then the following holds.
		\begin{itemize}
			\item For $i$ odd, we have $\sum_{\sigma \in \mathfrak{S}_N} a_{\sigma(1)}
			\times \cdots\times a_{\sigma(N)} = 0$  in $\CH_0(A^N)$.
			\item For $i$ even, we have $\sum_{\sigma \in \mathfrak{S}_N}
			\mathrm{sgn}(\sigma)\,  a_{\sigma(1)} \times \cdots\times a_{\sigma(N)} = 0$ 
			in
			$\CH_0(A^N)$.
		\end{itemize}
	\end{thm}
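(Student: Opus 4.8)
The plan is to realise each of the motives $\mathrm{S}^{N}\mathfrak{h}^{2g-i}(A)$ (for $i$ odd) and $\bigwedge^{N}\mathfrak{h}^{2g-i}(A)$ (for $i$ even) as a direct summand of $\mathfrak{h}(A^N)$ cut out by a \emph{generically defined idempotent} correspondence, and then to apply Theorem~\ref{T:main2}(1) with $n=1$ to kill its group of zero-cycles. Concretely, writing $\pi^{2g-i}\in\DCH^g(A\times A)$ for the Chow--K\"unneth projector of Lemma~\ref{L:kleiman} and $e^{+}=\frac1{N!}\sum_{\sigma\in\mathfrak{S}_N}\Gamma_\sigma$, $e^{-}=\frac1{N!}\sum_{\sigma\in\mathfrak{S}_N}\mathrm{sgn}(\sigma)\,\Gamma_\sigma$ for the (anti)symmetrizer attached to the permutation action of $\mathfrak{S}_N$ on $A^N$, I would set $\gamma:=e^{+}\circ\bigotimes_{l=1}^{N}\pi^{2g-i}$ for $i$ odd and $\gamma:=e^{-}\circ\bigotimes_{l=1}^{N}\pi^{2g-i}$ for $i$ even (external product over the $N$ factors). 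Since the $\pi^{2g-i}$ are idempotents that commute with the identical permutations, each $\gamma$ is an idempotent in $\CH^{Ng}(A^N\times A^N)$; and it is generically defined for $N$-fold powers of abelian varieties, the $\pi^{2g-i}$ being so by Lemma~\ref{L:kleiman}, the graphs of permutations manifestly being so, and the algebra of generically defined cycles being closed under external product and composition (Remark~\ref{rmk:N}).

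The crux is the computation of the Hodge coniveau of $\gamma^{*}\HH^{*}(A^N)$. Here I would use that transposition exchanges $\pi^{2g-i}$ with $\pi^{i}$, so that $\gamma^{*}=({}^t\gamma)_{*}$ acts on cohomology as the (anti)symmetrizer applied to $\HH^{i}(A)^{\otimes N}\subseteq\HH^{*}(A^N)$. Since the geometric transposition of two classes in $\HH^{i}(A)$ carries the Koszul sign $(-1)^{i^2}=(-1)^{i}$, in both parities the image is the same sub-Hodge structure
\[
\gamma^{*}\HH^{*}(A^N)\;\cong\;\textstyle\bigwedge^{N}\HH^{i}(A)\ \subseteq\ \HH^{Ni}(A^N),
\]
of weight $Ni$. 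The Hodge piece of $\bigwedge^{N}\HH^{i}(A)$ in bidegree $(Ni,0)$ is exactly $\bigwedge^{N}\HH^{i,0}(A)$, and since $\HH^{i,0}(A)=\bigwedge^{i}\HH^{1,0}(A)$ has dimension $\binom{g}{i}$, this piece vanishes precisely when $N>\binom{g}{i}$. More generally, the maximal $p$ for which $(\bigwedge^{N}\HH^{i}(A))^{p,q}\neq 0$ is obtained by taking as many wedge factors as possible from the top Hodge summand $\HH^{i,0}(A)$; when $N>\binom{g}{i}$ at most $\binom{g}{i}$ of them can be so chosen, forcing $\max p\le Ni-1$, hence level $\le Ni-2$, and thus $\gamma^{*}\HH^{*}(A^N)\subseteq\mathrm{N}_H^{1}\HH^{*}(A^N)$. (When $N\le\binom{g}{i}$ the $(Ni,0)$-piece survives and the coniveau drops to $0$, so the threshold is sharp.)

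With this coniveau bound in hand, Theorem~\ref{T:main2}(1) applied to the idempotent $\gamma$ with $n=1$ gives $\gamma_{*}\CH_0(A^N)=0$ for $N>\binom{g}{i}$, which is precisely the asserted vanishing $\CH_0(\mathrm{S}^{N}\mathfrak{h}^{2g-i}(A))=0$ (resp.\ $\CH_0(\bigwedge^{N}\mathfrak{h}^{2g-i}(A))=0$). For the concluding statement, I would observe that a zero-cycle $a$ with $[n]_*a=n^{i}a$ lies in $\CH_0(\mathfrak{h}^{2g-i}(A))$ by the Beauville decomposition \eqref{eq:eigen}, so that $a_1\times\cdots\times a_N$ lies in $\CH_0(\mathfrak{h}^{2g-i}(A)^{\otimes N})\subseteq\CH_0(A^N)$; applying $e^{+}$ (for $i$ odd) resp.\ $e^{-}$ (for $i$ even) places the symmetrization resp.\ anti-symmetrization of $a_1\times\cdots\times a_N$ inside the already-vanishing group $\gamma_{*}\CH_0(A^N)$, yielding the two displayed identities. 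The main obstacle is the middle paragraph: one must carefully track the transpose (which replaces $\HH^{2g-i}$ by $\HH^{i}$) together with the Koszul sign in order to identify $\gamma^{*}\HH^{*}(A^N)$ with $\bigwedge^{N}\HH^{i}(A)$, and then pin down the exact threshold $N=\binom{g}{i}$ at which the Hodge coniveau jumps from $0$ to $1$.
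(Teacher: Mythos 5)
Your proof is correct and follows essentially the same route as the paper: realize $\mathrm{S}^{N}\mathfrak{h}^{2g-i}(A)$ (resp.\ $\bigwedge^{N}\mathfrak{h}^{2g-i}(A)$) as the summand of $\mathfrak{h}(A^N)$ cut out by the generically defined idempotent $p_{S^N}\circ(\pi^{2g-i}_A\otimes\cdots\otimes\pi^{2g-i}_A)$ (resp.\ $p_{\wedge^N}\circ(\pi^{2g-i}_A\otimes\cdots\otimes\pi^{2g-i}_A)$), verify the vanishing of the relevant $(\ast,0)$-Hodge piece for $N>\binom{g}{i}$, and conclude by Theorem~\ref{T:main} (equivalently, the idempotent clause of Theorem~\ref{T:main2}(1)). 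Your tracking of the transpose and of the Koszul signs, which identifies $\gamma^{*}\HH^{*}(A^N)$ with $\bigwedge^{N}\HH^{i}(A)$ and the critical piece with $\bigwedge^{N}\HH^{i,0}(A)$ of dimension $\binom{\binom{g}{i}}{N}$, is in fact a bit more explicit than the paper's one-line cohomological assertion.
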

	\begin{proof} The reason for considering symmetric or anti-symmetric powers
		when $i$ is odd or even, respectively, is because the cohomology ring of a
		smooth variety is graded-commutative. As for 	
		the second part of the theorem, this follows simply from the description \eqref{eq:eigen} of 
		$\CH_0(\mathfrak{h}^{2g-i}(A))$.

		Given a permutation $\sigma \in \mathfrak{S}_{N}$, let us denote
		$\Gamma_\sigma
		\in \CH^{Ng}(A^N \times A^N)$ the graph of the morphism $(x_1,\ldots,x_n) \to
		(x_{\sigma^{-1}(1)},\ldots , x_{\sigma^{-1}(n)})$. The symmetric projector and
		the alternate projector are respectively
		\begin{equation}\label{E:proj}
			p_{S^N} := \frac{1}{n!}\sum_{\sigma \in \mathfrak{S}_N} \Gamma_\sigma \quad
			\text{and} \quad p_{\wedge^N} := \frac{1}{n!}\sum_{\sigma \in \mathfrak{S}_N}
			\mathrm{sgn}(\sigma)\, \Gamma_\sigma\,;
		\end{equation}
		they are generically defined idempotent correspondences for $N$-fold products
		of polarized
		abelian varieties of dimension $g$.
		For $i$ odd, the generically defined correspondence $p_{S^N}\circ
		(\pi^{2g-i}_{A} \otimes \cdots \otimes \pi^{2g-i}_{A})$ acts trivially on
		$\HH^{N(2g-i),0}(A^N)$ for $N> \binom{g}{i}$. For $i$ even, the generically
		defined correspondence $p_{\wedge^N}\circ (\pi^{2g-i}_{A} \otimes \cdots
		\otimes \pi^{2g-i}_{A})$ acts trivially on $\HH^{N(2g-i),0}(A^N)$ for $N>
		\binom{g}{i}$. In both case, we conclude by invoking Theorem~\ref{T:main}.
	\end{proof}

	\begin{rmk}
	Of course, one can state and prove many variants of Theorem \ref{T:sym}. For example, given integers $n\leq i$ with say $i$ odd, since the Hodge numbers $$h^{{N(2g-i),0}}, h^{{N(2g-i)-1,1}},\ldots, h^{{N(2g-i)-n,n}}$$ of $\mathrm{S}^{N}\mathfrak{h}^{2g-i}(A)$ vanish for $N> \sum_{j=0}^n \binom{g}{j} \binom{g}{i-j}$, we can prove that $\CH_r(\mathrm{S}^{N}\mathfrak{h}^{2g-i}(A))=0$ for all $r\leq n$. One could also consider the motives $\bigwedge^MS^N\mathfrak{h}^{2g-i}(A)$, various images under Schur functors, \emph{etc.}
Via the Abel--Jacobi map, one also recovers the fact that 
for a smooth projective  curve $C$ of genus $g$ we have 
$$ \sum_{\sigma\in\mathfrak{S}_N} a_{\sigma(1)} \times\cdots \times
a_{\sigma(N)} = 0 \quad \mbox{in } \CH_0(C^N),$$ for any integer
$N>g$ and any degree-0 zero-cycles $a_1,\ldots, a_N \in \CH_0(C)$. This is originally due independently to Voisin~\cite[p.267]{voisin0} and Voevodsky~\cite{voevodsky}\,; since algebraically trivial cycles are parametrized by curves, this establishes that, for any smooth projective variety $X$, any algebraically trivial cycle $a \in \CH^r(X)$ is \emph{smash-nilpotent}, that is, $a\times \cdots\times a = 0 \in \CH^{rN}(X^N)$ for some $N>0$.
	\end{rmk}

	\subsection{Zero-cycles on generalized Kummer varieties}\label{s:lin}
	
	Let $A$ be an abelian surface. The $n$-th generalized Kummer variety $K_n(A)$ associated
	to $A$ is a fiber of the isotrivial fibration $\mathrm{Hilb}^{n+1}(A) \to A$
	that is the composite of the Hilbert--Chow morphism $\mathrm{Hilb}^{n+1}(A) \to
	A^{n+1}/\mathfrak{S}_{n+1}$ with the sum morphism $\Sigma :
	A^{n+1}/\mathfrak{S}_{n+1} \to A$. 
	The variety $K_n(A)$ is known to be a hyperK\"ahler variety \cite{beauvillec1}, in particular $h^{2i,0}(K_n(A)) = 1$ for $0\leq i \leq n$, and $h^{2i+1,0}(K_n(A)) = 0$ for all $i$.
	A generalized Kummer
	variety of dimension 2 is nothing but a Kummer surface.

	In \cite{ftv}, we established that the Chow ring $\CH^*(K_n(A))$ of generalized
	Kummer varieties admits a grading that splits the conjectural Bloch--Beilinson
	filtration. We write $$\CH^*(K_n(A)) = \bigoplus_j \CH^*(K_n(A))_{(j)}.$$ In the case
	of zero-cycles, this grading has the following simple description (see \cite{lin}).
	The restriction of the Hilbert--Chow morphism provides
	a
	birational morphism from  $K_n(A)$ to the variety $A_0^{n+1} /
	\mathfrak{S}_{n+1}$, where $A_0^{n+1}$ is the fiber over $0$ of the sum
	morphism
	$\Sigma : A^{n+1} \to A$ and the action of the symmetric group
	$\mathfrak{S}_{n+1}$ is the one induced from the action on $A^{n+1}$ permuting
	the factors. Then
	$\CH_0(K_n(A))_{(j)}$ identifies with 
	$(\CH_0(A_0^{n+1})_{(j)})^{\mathfrak{S}_{n+1}}$ via the restriction of the
	Hilbert--Chow morphism, where $\CH_0(A_0^{n+1})_{(j)}$ is defined in
	\eqref{E:beauville}. Let us identify $A_0^{n+1}$ with $A^n$, and let us
	write
	$$p:= \frac{1}{(n+1)!}\sum_{\sigma \in \mathfrak{S}_{n+1}} \Gamma_\sigma $$
	for the projector on the $\mathfrak{S}_{n+1}$-invariant part of the motive of
	$A^n$\,; it is a generically defined correspondence for $n$-fold products of
	polarized abelian surfaces.
	Then we have $$\CH_0(K_n(A))_{(j)} = p^*\CH_0(A^n)_{(j)} = (\pi_{A^n}^{j} \circ p)^*
	\CH_0(A^n),$$
	where $\pi_{A^n}^{j}$ is a Chow--K\"unneth projector as in Lemma
	\ref{L:kleiman}, in particular generically defined. \medskip

	The following theorem is due to Hsueh-Yung Lin. We provide a short proof based on our Theorem~\ref{T:main}.
	
	\begin{thm}[Lin \cite{lin, lin2}] \label{T:Lin}
		$\CH_0(K_n(A))_{(2j+1)} = 0$ for all integers $j$.
	\end{thm}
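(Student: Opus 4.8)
The plan is to realize $\CH_0(K_n(A))_{(2j+1)}$ as the image of $\CH_0(A^n)$ under a generically defined idempotent self-correspondence that kills all holomorphic forms, and then to feed this into Theorem~\ref{T:main}. Recall the identification recorded just above, $\CH_0(K_n(A))_{(2j+1)} = (\pi_{A^n}^{2j+1}\circ p)^{*}\CH_0(A^n)$, and set $\gamma := \pi_{A^n}^{2j+1}\circ p \in \CH^{2n}(A^n\times A^n)$. First I would check that $\gamma$ is a generically defined idempotent self-correspondence of degree $0$ on the $n$-fold power of polarized abelian surfaces. Generic definedness of $p$ is built into its construction; that of the Chow--K\"unneth projector $\pi_{A^n}^{2j+1}$ of the abelian variety $A^n$ (of dimension $2n$) follows from Lemma~\ref{L:kleiman} together with Remark~\ref{R:uni}, and generically defined correspondences are closed under composition. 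Idempotency follows from $p^2=p$, $(\pi_{A^n}^{2j+1})^2=\pi_{A^n}^{2j+1}$, and the fact that $\mathfrak{S}_{n+1}$ acts on $A^n\cong A_0^{n+1}$ by group automorphisms, which commute with $[m]$ and hence with every Chow--K\"unneth projector of $A^n$; therefore $\pi_{A^n}^{2j+1}$ and $p$ commute.

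Since the graded piece appears as the image of the \emph{pull-back} $\gamma^{*}=({}^{t}\gamma)_{*}$, I would apply Theorem~\ref{T:main} in its precise form, Theorem~\ref{T:main2}\,(\ref{I:1}), not to $\gamma$ but to its transpose $\delta := {}^{t}\gamma$, which is again a generically defined idempotent self-correspondence of degree $0$. Taking $n=1$ there, it suffices to verify the cohomological hypothesis $\delta^{*}\HH^{i,0}(A^n)=0$ for all $i$; equivalently, since $\delta^{*}=\gamma_{*}$, to check that $\gamma_{*}$ annihilates every holomorphic form on $A^n$.

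This last vanishing is the crux. Writing $\gamma_{*}=(\pi_{A^n}^{2j+1})_{*}\circ p_{*}$, the operator $p_{*}$ is the projector onto the $\mathfrak{S}_{n+1}$-invariants while $(\pi_{A^n}^{2j+1})_{*}$ is the projector onto $\HH^{2j+1}(A^n,\Q)$; hence $\gamma_{*}\HH^{i,0}(A^n)$ vanishes for $i\neq 2j+1$ and equals $\big(\HH^{2j+1,0}(A^n)\big)^{\mathfrak{S}_{n+1}}$ for $i=2j+1$. Under the isomorphism $A^n\cong A_0^{n+1}$ and the birational (crepant) Hilbert--Chow morphism $K_n(A)\to A_0^{n+1}/\mathfrak{S}_{n+1}$, the holomorphic $(2j+1)$-forms on $K_n(A)$ are identified with the $\mathfrak{S}_{n+1}$-invariant holomorphic $(2j+1)$-forms on $A_0^{n+1}$, so this space equals $\HH^{0}(K_n(A),\Omega^{2j+1})=\HH^{2j+1,0}(K_n(A))$. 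This is $0$, since $K_n(A)$ is hyperK\"ahler with vanishing odd Hodge numbers $h^{2i+1,0}(K_n(A))=0$. Thus the hypothesis of Theorem~\ref{T:main2}\,(\ref{I:1}) holds, giving $\delta_{*}\CH_0(A^n)=\gamma^{*}\CH_0(A^n)=\CH_0(K_n(A))_{(2j+1)}=0$.

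The one genuine subtlety, which I expect to be the main point to get right rather than a hard obstruction, is the passage to the transpose. If one applied the theorem directly to $\gamma$, the hypothesis $\gamma^{*}\HH^{i,0}(A^n)=0$ would hold trivially for degree reasons, because $\gamma^{*}$ lands in $\HH^{4n-2j-1}(A^n,\Q)$, which carries no holomorphic forms once $4n-2j-1>2n$ (i.e.\ for all $0\le j\le n-1$); but the resulting conclusion would bear on $\gamma_{*}$ rather than on the pull-back $\gamma^{*}$ that computes the graded piece. It is precisely the nontrivial input $h^{2j+1,0}(K_n(A))=0$, fed in through $\delta={}^{t}\gamma$, that carries the proof.
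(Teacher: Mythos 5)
Your proof is correct and follows essentially the same route as the paper: identify $\CH_0(K_n(A))_{(2j+1)}$ as the image of $\CH_0(A^n)$ under the generically defined idempotent correspondence $\pi_{A^n}^{2j+1}\circ p$, feed the vanishing $h^{2j+1,0}(K_n(A))=0$ into Theorem~\ref{T:main}. Your explicit passage to the transpose is precisely the bookkeeping the paper compresses into ``a straightforward application of Theorem~\ref{T:main}'', and you carry it out correctly.
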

	\begin{proof}
		We know that $ p^*\HH^{2j+1,0}(A^n) = \HH^{2j+1,0}(K_n(A)) = 0$ for all
		integers
		$j$, so that $ (\pi^{2j+1}_{A^n} \circ p)^*\HH^*(A^n,\Q) \subseteq
		\mathrm{N}_H^1 \HH^{*}(A^n,\Q)$.  The theorem is then 
		a straightforward application of Theorem~\ref{T:main}.
	\end{proof}
	
	The following theorem generalizes a result of Voisin \cite[Proposition
	3.2]{voisin0} for Kummer surfaces to the higher dimensional case of generalized
	Kummer varieties.
	
	\begin{thm} \label{T:Alt2Kum}
		Let $a$ and $b$ be two cycles in $\CH_0(K_n(A))_{(2j)}$. Then 
		$$a\times b  = b\times a \quad \text{in}\ \CH_0(K_n(A)\times K_n(A)).$$
	\end{thm}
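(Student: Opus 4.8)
The plan is to transfer the assertion to the self-product $A^{2n}$ of the abelian surface and to apply Theorem~\ref{T:main} to a generically defined \emph{idempotent} correspondence, in the manner of the proof of Theorem~\ref{T:sym}; the only geometric input beyond Theorem~\ref{T:main} is that $h^{2i,0}(K_n(A))=1$. First I would recall, as in the discussion preceding Theorem~\ref{T:Lin}, that $Q:=\pi^{2j}_{A^n}\circ p\in\CH^{2n}(A^n\times A^n)$ is a generically defined idempotent correspondence, that the Hilbert--Chow morphism identifies $\CH_0(K_n(A))$ with $\CH_0(A^n)^{\mathfrak{S}_{n+1}}$, and that under this identification $\CH_0(K_n(A))_{(2j)}$ is the image of $Q$ (here $A_0^{n+1}\cong A^n$). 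Applying the same identification to the self-product, and using that the Hilbert--Chow morphism is compatible with products, identifies $\CH_0(K_n(A)\times K_n(A))$ with $\CH_0(A^{2n})^{\mathfrak{S}_{n+1}\times\mathfrak{S}_{n+1}}$, where $A^{2n}=A^n\times A^n$; under this identification $a,b$ correspond to classes $\widetilde a,\widetilde b$ fixed by $Q$, the external product $a\times b$ corresponds to $\widetilde a\times\widetilde b$, and the exchange of the two copies of $K_n(A)$ corresponds to the involution $T$ of $A^{2n}$ swapping the two $A^n$-blocks. Hence it suffices to prove $\widetilde a\times\widetilde b=\widetilde b\times\widetilde a$ in $\CH_0(A^{2n})$.

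Next I would introduce, on $A^{2n}$, the self-correspondence $\gamma:=\tfrac12(\Delta_{A^{2n}}-\Gamma_T)\circ({}^tQ\otimes{}^tQ)$, where $\Gamma_T$ is the graph of $T$ and ${}^tQ\otimes{}^tQ$ acts by ${}^tQ$ on each block (the transpose being chosen so that the induced endomorphism of $\CH_0$ fixes $\widetilde a\times\widetilde b$). Since $\pi^{2j}_{A^n}$ and $p$ are commuting idempotents, $Q$ is idempotent; since ${}^tQ\otimes{}^tQ$ is symmetric under $T$ it commutes with $\tfrac12(\Delta_{A^{2n}}-\Gamma_T)$, which is idempotent because $\Gamma_T\circ\Gamma_T=\Delta_{A^{2n}}$; hence $\gamma$ is idempotent. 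By Lemma~\ref{L:kleiman}, Remark~\ref{R:uni}, the genericity of $p$, and the stability of generically defined cycles under products, compositions and transposition, $\gamma$ is a generically defined self-correspondence of degree $0$ for $2n$-fold products of polarized abelian surfaces. By construction $\gamma_*(\widetilde a\times\widetilde b)=\tfrac12(\widetilde a\times\widetilde b-\widetilde b\times\widetilde a)$.

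The crux is to verify that $\gamma^*\HH^{i,0}(A^{2n})=0$ for all $i$, for then Theorem~\ref{T:main}, applied with coniveau parameter $1$ to the idempotent $\gamma$, gives $\gamma_*\CH_0(A^{2n})=0$, whence $\widetilde a\times\widetilde b=\widetilde b\times\widetilde a$ and, transferring back, $a\times b=b\times a$. To see the vanishing, note that $\HH^{*,0}(A^{2n})=\HH^{*,0}(A^n)\otimes\HH^{*,0}(A^n)$, and that the image of ${}^tQ\otimes{}^tQ$ on $\HH^{*,0}(A^{2n})$ is the external square of the image of ${}^tQ$ on $\HH^{*,0}(A^n)$. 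The latter lands in the $\mathfrak{S}_{n+1}$-invariant holomorphic forms of a single weight, which via Hilbert--Chow is one of the spaces $\HH^{2i,0}(K_n(A))$; since $h^{2i,0}(K_n(A))=1$ this image is a line $\C\eta$ with $\eta$ of \emph{even} degree. Consequently the image of ${}^tQ\otimes{}^tQ$ is spanned by $\eta\otimes\eta$, which is symmetric under the swap $T$ because $\eta$ has even degree, and is therefore annihilated by the anti-symmetrizer $\tfrac12(\Delta_{A^{2n}}-\Gamma_T)$. Hence $\gamma^*\HH^{i,0}(A^{2n})=0$. This is precisely the two-factor Kummer analogue of the vanishing $\bigwedge^2\HH^{2,0}=0$ used by Voisin for Kummer surfaces, and of the numerology $N>\binom{g}{i}=1$ in Theorem~\ref{T:sym}.

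I expect the main obstacle to be not the cohomological computation---which reduces cleanly to the one-dimensionality and even degree of $\HH^{2i,0}(K_n(A))$---but the bookkeeping around the Hilbert--Chow identification: checking that it carries external products and the exchange involution as claimed, and fixing the variance (the transposes on $Q$) so that $\gamma_*$ realizes exactly the anti-symmetrization while $\gamma^*$ still kills the holomorphic forms.
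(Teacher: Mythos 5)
Your proof is correct and is essentially the paper's own argument: the paper likewise applies Theorem~\ref{T:main} (via the idempotent case of Theorem~\ref{T:main2}) to the generically defined anti-symmetrizing correspondence $p_{\wedge^2}\circ\bigl((\pi^{2j}_{A^n}\circ p)\otimes(\pi^{2j}_{A^n}\circ p)\bigr)$ on $A^{2n}$, using that $\dim\HH^{2j,0}(A^n)^{\mathfrak{S}_{n+1}}=\dim\HH^{2j,0}(K_n(A))=1$ in even degree to kill the holomorphic forms. The only difference is that you spell out the Hilbert--Chow transfer, the variance, and the idempotency checks that the paper's three-line proof leaves implicit.
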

	\begin{proof}
		Let
		$p_{\wedge^2}$ be the generically defined idempotent $\Delta_{A^{n}} -
		\Gamma_\tau \in \CH^{2n}(A^{n}\times A^{n})$ where $\tau  : A^n\times A^n \to
		A^{n} \times A^{n}$ is the morphism permuting the factors.  Since
		$\HH^{2j,0}(A^n)^{\mathfrak{S}_{n+1}}  = \HH^{2j,0}(K_n(A))  = 1$, we have
		$(p_{\wedge^2} \circ \pi^{2j}_{A^n} \circ p)^*\HH^*(A^n,\Q) \subseteq
		\mathrm{N}_H^1 \HH^{*}(A^n,\Q)$. We may now conclude by invoking Theorem
		\ref{T:main}.
	\end{proof}

	\subsection{On a conjecture of Voisin} \label{s:voisin}
	Let $N\geq 2$ be an integer, and let $S$
	be a K3 surface. Denote $pr : S^N \to S^{N-1}$ the projection to the first
	$N-1$
	factors\,; it induces for all $l\geq 0$ a morphism 
	$$pr_* : (p_{\wedge^N})_* \CH_l(S^N) \longrightarrow (p_{\wedge^{N-1}})_*
	\CH_l(S^{N-1}),$$
	where $p_{\wedge^{N}}$ is the anti-symmetrization projector defined in
	\eqref{E:proj}.
	As a consequence of the Bloch--Beilinson philosophy, Voisin \cite[Conjecture
	3.9]{voisin0} stated\,:
	
	\begin{conj}[Voisin] \label{C:voisin} The anti-symmetrization projector
		$p_{\wedge^{N+1}}$ acts as zero on 
		$\ker (pr_*) \otimes \CH_0(S)_{\mathrm{num}}$ for all $l <  N$. 
	\end{conj}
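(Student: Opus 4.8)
The plan is to prove the conjecture for a Kummer surface $S=\mathrm{Km}(A)$ by transporting the whole situation to powers of the abelian surface $A$, where Theorem~\ref{T:main} is available. The Kummer construction works in families, so the quotient $A\to A/\{\pm 1\}$ followed by the resolution $\mathrm{Km}(A)\to A/\{\pm 1\}$ yields a generically defined correspondence $\Phi\in\CH^2(A\times S)$ identifying the transcendental motive $\mathfrak t(S)$ of $S$ with the transcendental summand $\mathfrak h^2_{\mathrm{tr}}(A)$ of $\mathfrak h(A)$, and hence $\CH_0(S)_{\mathrm{num}}$ with $\CH_0(A)_{(2)}$. Since $\mathfrak h(S)=\mathds 1\oplus\mathrm{NS}(-1)\oplus\mathfrak t(S)\oplus\mathds 1(-2)$ is assembled from Tate motives and the summand $\mathfrak h^2_{\mathrm{tr}}(A)$ of $\mathfrak h(A)$, every Chow group of $S^{N+1}$ and every correspondence built from the $p_{\wedge^\bullet}$, the projection $pr$, the transcendental projector and $\Phi$ can be read off a power of $A$; crucially, the anti-symmetrizations, the Chow--K\"unneth projectors of Lemma~\ref{L:kleiman} and $\Phi^{\times\bullet}$ are all generically defined and symmetrically distinguished.

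First I would package the operation $(z,w)\mapsto p_{\wedge^{N+1}}(z\times w)$ into a single self-correspondence of $S^{N+1}$, namely $\gamma=p_{\wedge^{N+1}}\circ(q_{\ker}\times\pi_{\mathrm{tr}})$, where $\pi_{\mathrm{tr}}$ is the transcendental projector acting on the last factor and $q_{\ker}$ is a correspondence on $S^N$ realising $\ker(pr_*)$. Then $\gamma_*(z\times w)=p_{\wedge^{N+1}}(z\times w)$ for $z\in\ker(pr_*)$ and $w\in\CH_0(S)_{\mathrm{num}}$, so it suffices to prove $\gamma_*\CH_r(S^{N+1})=0$ for all $r<N$. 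After transport to $A^{N+1}$ this is precisely the conclusion of Theorem~\ref{T:main} (in the sharp form of Theorem~\ref{T:main2}), provided one verifies the cohomological hypothesis $\gamma^*\HH^{i,j}(S^{N+1})=0$ for all $j<N$.

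The heart of the matter is this cohomological vanishing, and it rests entirely on the shape of the Hodge structure of a K3 surface: $\HH^{\mathrm{odd}}(S)=0$, while $\HH^0(S)$, $\HH^4(S)$ and $\HH^{2,0}(S)$ are one-dimensional. Two constraints conspire. Because $pr_*$ integrates out one factor, the anti-symmetric condition $\ker(pr_*)$ forces every K\"unneth component entering $z$ to avoid an $\HH^4(S)$-factor; and $\pi_{\mathrm{tr}}$ forces the last factor into $\HH^2_{\mathrm{tr}}(S)$, of degree exactly $2$. Isolating, by generically defined Chow--K\"unneth projectors, the cohomological degree $4N+4-2l$ that $\CH_l(S^{N+1})$ maps to, one finds that for $l<N$ the $z$-factors would have to carry total degree $4N+2-2l\geq 2N+4$, whereas, avoiding $\HH^4(S)$ and odd cohomology, they carry at most $2$ each, hence at most $2N$. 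This contradiction shows $\gamma^*$ vanishes in the relevant degrees. Equivalently, after this reduction the $N+1$ anti-symmetrized factors all lie in $\HH^2(S)$, and since the one-dimensional $\HH^{2,0}(S)$ can occupy at most one of them, every class has at least $N$ factors of Hodge coniveau $\geq 1$, so $\gamma^*\HH^*(S^{N+1})\subseteq\mathrm N_H^N$. Feeding this into Theorem~\ref{T:main2}, and using that $\gamma$ is symmetrically distinguished so that O'Sullivan's Theorem~\ref{T:Osullivan} upgrades nilpotence to genuine vanishing, gives $\gamma_*\CH_r=0$ for $r<N$, which is the conjecture.

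The step I expect to be the main obstacle is the construction of $q_{\ker}$ together with the bookkeeping that makes the coniveau bound uniform across all cohomological degrees. Indeed $\ker(pr_*)$ is not visibly a motivic direct summand, and the naive correspondence $p_{\wedge^{N+1}}\circ(\mathrm{id}\times\pi_{\mathrm{tr}})$ lands only in $\mathrm N_H^{N-1}$, because of borderline components carrying an $\HH^0(S)$-factor (which the kernel condition does not remove). Closing this final gap is exactly where the dimension hypothesis $l<N$ enters---through the degree count above, or equivalently through the generically defined Chow--K\"unneth projection onto the degree $4N+4-2l$---rather than through the kernel condition alone. The remaining technical point is to check that $q_{\ker}$, $\pi_{\mathrm{tr}}$ and their composites with $\Phi^{\times\bullet}$ stay generically defined and symmetrically distinguished after transport to $A^{N+1}$, so that Theorem~\ref{T:main2} genuinely applies.
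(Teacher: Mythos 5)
Your overall architecture coincides with the paper's: extend Theorem~\ref{T:main} to the universal family of Kummer surfaces (the paper does this via the blow-up of the $2$-torsion and the fiberwise quotient by $-1$ over the moduli space with level-$4$ structure), package the whole operation into a single generically defined correspondence $p_{\wedge^{N+1}}\circ(q\otimes\pi)$ with $q$ a projector realising $\ker(pr_*)$ and $\pi$ realising $\CH_0(S)_{\mathrm{num}}$, verify the coniveau hypothesis on cohomology, and conclude by the symmetrically distinguished case of Theorem~\ref{T:main2} (your explicit appeal to O'Sullivan to upgrade nilpotence to vanishing is the right move, and is needed, since the composite correspondence is not obviously idempotent). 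The choice of the transcendental projector $\pi_{\mathrm{tr}}$ versus the paper's $\pi^2_S=\Delta_S-[0]\times S-S\times[0]$ is immaterial.

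The gap is in your justification of the key cohomological input $\gamma^*\HH^{i,j}(S^{N+1})=0$ for $j<N$. First, the ``isolate the cohomological degree $4N+4-2l$'' device does not interface with Theorem~\ref{T:main}: that theorem needs the coniveau-$N$ condition on $\gamma^*\HH^k$ \emph{in every degree} $k$ to conclude anything about $\CH_l$, and $\CH_l(S^{N+1})$ is not supported on the single Chow--K\"unneth component of degree $4N+4-2l$ (its Beauville--Bloch--Beilinson pieces involve a whole range of degrees), so composing with $\pi^{4N+4-2l}$ only controls one graded piece. Second, the statement that would actually give the coniveau bound --- that after the reduction all $N+1$ antisymmetrized factors lie in $\HH^2(S)$ --- is precisely what you concede is \emph{not} forced by the kernel condition, and the degree count does not repair it: the genuinely dangerous classes are the antisymmetrizations of $1\otimes\omega\otimes\eta_1\otimes\cdots\otimes\eta_{N-1}$ with $1\in\HH^{0}(S)$, $\omega\in\HH^{2,0}(S)$, $\eta_i\in\HH^{1,1}(S)$, which sit in degree $2N$ with Hodge type $(N+1,N-1)$ and so violate coniveau $N$ if they survive. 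What kills them is not a degree count but the fact that the hypothesis of Theorem~\ref{T:main} concerns the \emph{transpose} action $q^*={}^tq_*$, whose kernel is the Poincar\'e orthogonal of $\ker(pr_*)\cap\operatorname{im}(p_{\wedge^N})_*$; since an antisymmetric class containing an $\HH^4(S)$-factor is never in $\ker(pr_*)$ (integrating out the point class reproduces the remaining factors), the pairing forces $q^*$ to annihilate every class with an $\HH^0(S)$-factor. This is exactly the content of the explicit cohomological computation of Voisin that the paper cites at this step (and it also shows that the section $\gamma$ entering $q$ must be chosen correctly, e.g.\ via a point-section rather than via $pr^*$, for $q$ to be an idempotent with the stated image). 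Without this computation, or the duality argument replacing it, your proof of the coniveau hypothesis is incomplete.
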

	Voisin established this conjecture for $N=2$ in the case where $S$ is a Kummer
	surface by a lengthy calculation\,; see \cite[Theorem 3.10]{voisin0}. A variant
	of our Theorem~\ref{T:main} makes it possible to prove (a stronger form of)
	Voisin's conjecture for
	Kummer surfaces for all values of $N$.
	
	\begin{thm}\label{T:voisin}
		Conjecture~\ref{C:voisin} is true for Kummer surfaces for all integers $N\geq
		2$.
	\end{thm}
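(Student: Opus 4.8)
The plan is to reduce everything to the abelian surface $A$ underlying the Kummer surface $S=\mathrm{Km}(A)$ and then to feed the resulting correspondence into Theorem~\ref{T:main} (in the form of Theorem~\ref{T:main2}). The starting observation is that the passage $A\rightsquigarrow \mathrm{Km}(A)$ is functorial over the moduli space of polarized abelian surfaces: a polarized family of abelian surfaces gives rise to a family of Kummer surfaces, and the degree-$2$ correspondence between $A$ and $\mathrm{Km}(A)$ coming from the quotient $A\to A/\{\pm 1\}$ (together with the exceptional divisors) is generically defined and symmetrically distinguished. Consequently the anti-symmetrizers $p_{\wedge^k}$ of \eqref{E:proj}, the projections $pr$, the transcendental and Chow--K\"unneth projectors of $S$ (obtained from Lemma~\ref{L:kleiman} transported through the Kummer correspondence), and the identification of $\CH_0(S)_{\mathrm{num}}$ with the $[-1]$-invariant Mumford part $\CH_0(A)_{(2)}$ of $\CH_0(A)$, are all generically defined and symmetrically distinguished for powers of abelian surfaces. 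This yields the announced ``variant of Theorem~\ref{T:main}'': any generically defined symmetrically distinguished self-correspondence $\gamma$ on a power of $S$ with $\gamma^*\HH^*\subseteq \mathrm{N}_H^n\HH^*$ satisfies $\gamma_*\CH_r=0$ for $r<n$.

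The key cohomological input is a homological-triviality lemma: for $z\in (p_{\wedge^N})_*\CH_l(S^N)$ with $pr_*z=0$ and $l<N$, the cycle $z$ is homologically trivial. Indeed, since $\HH^*(S)$ is concentrated in even degrees, $p_{\wedge^N}$ projects $\HH^*(S^N)$ onto the exterior power $\bigwedge^N\HH^*(S)$; writing $\HH^*(S)=\HH^0\oplus\HH^2\oplus\HH^4$ and using that $pr_*$ is the contraction by the fundamental-class functional $\int_S$ (which is supported on $\HH^4$), the condition $pr_*z=0$ forces $[z]\in\bigwedge^N(\HH^0(S)\oplus\HH^2(S))$, a space concentrated in cohomological degrees $\le 2N$. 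But $[z]\in\HH^{2(2N-l)}(S^N)$ with $2(2N-l)=4N-2l>2N$ for $l<N$, whence $[z]=0$.

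With this in hand, both $z$ and $a\in\CH_0(S)_{\mathrm{num}}$ are homologically trivial, so $w:=p_{\wedge^{N+1}}(z\times a)\in\CH_l(S^{N+1})$ is homologically trivial with $l<N$. I would now realise the operation $(z,a)\mapsto w$ by a generically defined symmetrically distinguished correspondence $\gamma$ on $S^{N+1}$ of the shape $p_{\wedge^{N+1}}\circ(\Pi\otimes\pi^{S,\mathrm{tr}})$, where $\pi^{S,\mathrm{tr}}$ is the transcendental projector of the last factor and $\Pi$ is assembled from the Chow--K\"unneth projectors of $S^N$ so as to capture $z$. The decisive point is to check that $\gamma^*\HH^*(S^{N+1})\subseteq\mathrm{N}_H^N\HH^*$: here one uses that $h^{2,0}(S)=1$, so that under anti-symmetrization the unique holomorphic $2$-form can occur at most once, which together with the homological triviality above should push the Hodge coniveau of the relevant sub-Hodge-structure up to $N$. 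Transporting $\gamma$ to $A^{N+1}$ and invoking Theorem~\ref{T:main2}(2) (equivalently O'Sullivan's Theorem~\ref{T:Osullivan} for the symmetrically distinguished, hence motivic, vanishing) then gives $\gamma_*=0$ on $\CH_l$ for $l<N$, i.e. $w=0$, which is the stronger form of Conjecture~\ref{C:voisin}.

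The hard part is the coniveau verification of the previous paragraph, and it is genuinely delicate because the subspace $\ker(pr_*)\subseteq\CH_l(S^N)$ is \emph{not} cut out by a projector: a symmetrically distinguished idempotent onto a homologically trivial summand would be nilpotent, hence zero, by Theorem~\ref{T:Kimura}, yet $\ker(pr_*)$ is nonzero. Thus one cannot simply project onto $\ker(pr_*)$, and the homological-triviality lemma must instead be combined with the Beauville decomposition on $A^{N+1}$. Concretely, a naive cohomological weight projector $\Pi$ produces spurious classes of Hodge coniveau only $N-1$, but these correspond to Chow--K\"unneth components living in the \emph{wrong} Chow dimension (dimension $\ge N$) or in low Beauville weight; the homological-triviality lemma shows they do not actually arise for our $z$, and the remaining book-keeping---selecting the correct Beauville weights and tracking the Tate twist relating the transcendental $0$-cycle $a$ to the weight-$2$ transcendental motive of $S$---is what has to be carried out with care, presumably by an induction on the Beauville depth.
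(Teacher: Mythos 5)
Your overall strategy is the paper's: transport Theorem~\ref{T:main} to the universal family of Kummer surfaces (the paper does this via the blow-up $\widetilde A$ along the $2$-torsion and level-$4$ structures), realize the operation $(z,a)\mapsto p_{\wedge^{N+1}}(z\times a)$ by a generically defined correspondence on $S^{N+1}$, check that this correspondence has image of Hodge coniveau $\geq N$ in cohomology, and conclude. But the proof is not completed: the construction of the correspondence capturing $\ker(pr_*)$ and the coniveau verification are precisely the points you declare open in your final paragraph, and the reason you give for the difficulty rests on a misconception. You argue that $\ker(pr_*)\subseteq\CH_l(S^N)$ cannot be cut out by a projector because ``a symmetrically distinguished idempotent onto a homologically trivial summand would be nilpotent, hence zero.'' This conflates the kernel of $pr_*$ on Chow groups with its kernel on cohomology. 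The cohomological kernel of $pr_* : (p_{\wedge^{N}})_*\HH^*(S^N,\Q)\to(p_{\wedge^{N-1}})_*\HH^*(S^{N-1},\Q)$ is a large non-trivial subspace (it contains, e.g., $\bigwedge^N\HH^2(S,\Q)$), so a projector onto it is in no danger of being nilpotent. The paper simply sets
$$q \ :=\ \mathrm{id} \ -\ \gamma\circ p_{\wedge^{N-1}}\circ pr_*\circ p_{\wedge^{N}}$$
for a generically defined section $\gamma$ of $pr_*$; this $q$ is a generically defined idempotent whose cohomological image is the cohomological kernel, and on Chow groups one does not need $q_*\CH_l(S^N)$ to \emph{equal} $\ker(pr_*)$ --- the containment $q_*\CH_l(S^N)\supseteq\ker(pr_*)$, which is immediate because $q$ acts as the identity on that kernel, suffices, since the conclusion is a vanishing statement applied to everything in the image of $q_*$. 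No induction on Beauville depth is required.

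With $q$ in hand and $\pi^2_S:=\Delta_S-[0]\times S-S\times[0]$ (so that $(\pi^2_S)_*\CH_0(S)=\CH_0(S)_{\mathrm{num}}$), the theorem reduces to showing that $\left(p_{\wedge^{N+1}}\circ(q\otimes\pi^2_S)\right)_*\CH_l(S^{N+1})=0$ for $l<N$, and the only remaining input is the cohomological computation (due to Voisin) that $\left(p_{\wedge^{N+1}}\circ(q\otimes\pi^2_S)\right)^*\HH^{i,j}(S^{N+1},\Q)=0$ for $i<N$; your remark that $h^{2,0}(S)=1$ forces the holomorphic $2$-form to appear at most once under anti-symmetrization is the germ of that computation, but you do not carry it out. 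Your homological-triviality lemma for individual cycles $z$ with $pr_*z=0$ is correct, but it is not the relevant statement: what Theorem~\ref{T:main} consumes is a coniveau bound on the correspondence itself, not the homological triviality of its outputs (a homologically trivial $0$-cycle need not vanish). As it stands, the proposal identifies the right framework but leaves the decisive construction and verification undone.
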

	\begin{proof}
		Let $A$ be a  polarized abelian surface, and let $S$ be the Kummer surface
		attached to $A$. We view $S$ as the quotient of the blow-up $\widetilde{A}$ of
		$A$ along its $2$-torsion points  by the involution induced by the
		multiplication-by-$(-1)$ map on $A$. In particular, since the cohomology of
		$\widetilde{A}$ differs from that of $A$ only by Hodge classes, 
		we have the analogue of Theorem~\ref{T:gen2}  for the very general polarized
		abelian surface as long as we allow the sum to run through all cycles $Q \in
		\CH^n(A^m)$ which are products of cycles of the form $(p_i)^*L, (p_i)^*E_r,
		(p_{i,j})^*P$, where $E_r$  denote the exceptional curves of $\widetilde A$.
		As a
		consequence, one can show that the conclusion of Theorem~\ref{T:main}
		holds for $\widetilde{A}$ by working with the universal
		polarized abelian surface of degree $d^2$ with level-4 structure. (We avoid
		working with level-2
		structure in order to avoid having to deal with stacks.) In fact, quotienting
		by
		the action of multiplication-by-$(-1)$ fiber-wise, the conclusion of Theorem
		\ref{T:main} holds for the induced universal family of Kummer surfaces.
		
		Since $pr$,  $p_{\wedge^{N}}$ and $p_{\wedge^{N-1}}$ are generically
		defined, arguing as in the proof of Proposition~\ref{P:niveau}, we may
		construct
		a generically defined idempotent correspondence
		$q \in \CH^{2N}(S^N\times S^N)$ such that  
		$$q_{*} \HH^*(S^N,\Q) =  \ker \big(pr_* : (p_{\wedge^{N}})_*\HH^*(S^N,\Q)
		\to (p_{\wedge^{N-1}})_*\HH^*(S^{N-1},\Q)\big).$$
		More precisely, there is a generically defined correspondence $\gamma$ on
		$S^{N-1}\times S^N$ such that $q = \mathrm{id} - \gamma\circ p_{\wedge^{N-1}}
		\circ pr_* \circ p_{\wedge^{N}}$.  In particular, we see that 
		$$q_{*}\CH_l(S^N) \supseteq \ker\big(pr_* : (p_{\wedge^{N}})_*\CH_l(S^N) \to
		(p_{\wedge^{N-1}})_*\CH_l(S^{N-1})\big).$$
		On the other hand, defining $\pi^2_S$ to be the generically defined
		idempotent $\Delta_S - [0]\times S - S\times [0] \in \CH^2(S\times S)$, we
		have
		$$\CH_0(S)_{\mathrm{num}} = (\pi^2_S)_*\CH_0(S).$$
		Therefore, in order to prove the theorem, it is enough to establish that 
		$$\left( p_{\wedge^{N+1}} \circ (q \otimes \pi^2_S)\right)_*
		\left(\CH_l(S^{N}) \otimes \CH_0(S)\right)  = 0 \quad \text{for all } l<N.$$ 
		A cohomological calculation (as performed by Voisin \cite[p. 274]{voisin0})
		shows that 
		$$\left( p_{\wedge^{N+1}} \circ (q \otimes \pi^2_S)\right)^*
		\HH^{i,j}(S^{N+1},\Q) = 0 \quad \text{for all } i< N.$$
		Therefore, by Theorem~\ref{T:main} applied to polarized abelian
		surfaces of degree $d^2$ with level-4 structure we obtain the stronger result
		that 
		$$\left( p_{\wedge^{N+1}} \circ (q \otimes \pi^2_S)\right)_* \CH_l(S^{N+1}) 
		= 0 \quad \text{for all } l<N.$$ This concludes the proof of the theorem.
	\end{proof}

	\subsection{Varieties motivated by an abelian surface}\label{sec:motab}
	Here, we say that a smooth projective variety is  \emph{motivated by an abelian
		variety}
	$A$ if its Chow motive is isomorphic to an object in the full, thick and rigid
	subcategory of Chow motives generated by $A$. In other words, $X$ is motivated
	by $A$ if $\mathfrak{h}(X)$ is isomorphic to a direct summand of a motive of
	the
	form $\bigoplus_i \mathfrak{h}(A^{m_i})(n_i)$ for some integers $m_i\geq 0$ and
	$n_i \in \mathbb{Z}$.
	In particular, by Corollary~\ref{cor:GHCsurface}, a strong
	form
	of the generalized Hodge conjecture  holds for
	the
	powers of $X$\,; \emph{i.e.}, 	
	\begin{equation}\label{E:K}
		\mathrm{N}_H^r\HH^k(X^m,\Q) = \Gamma_*\HH^{k-2r}(B,\Q),
	\end{equation} where $B $ is a disjoint union of abelian varieties and where
	$\Gamma$ is a correspondence between $B$ and
	$X^m$.	
	Examples of varieties motivated by an abelian surface include generalized
	Kummer varieties (see \cite{xu}, and also \cite[Corollary 6.3]{ftv}).
	In particular, the following theorem applies to generalized Kummer varieties.
	\begin{thm}\label{T:app}
		Let $X$ be a smooth projective variety of dimension $d$ and let  $\gamma \in
		\mathrm{\CH}^d(X\times X)$ be a correspondence. Assume that the motive of $X$
		is
		motivated by the motive of an abelian variety $A$ of dimension $\leq 2$.
		If $\gamma^*\HH^{i,j}(X)=0$ for all $j<n$, then there exists an integer
		$N\geq 1$ such that
		$(\gamma^{\circ N})_*\CH_r(X) = 0$ for all $r<n$. In particular, if in
		addition
		$\gamma$ is an idempotent,  then $\gamma_*\CH_r(X) = 0$ for all $r<n$.
	\end{thm}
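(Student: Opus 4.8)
The plan is to re-run the argument of Theorem~\ref{T:main2}\,\eqref{I:1}, replacing its two inputs --- the explicit generically defined projectors of Proposition~\ref{P:niveau} and the finite-dimensionality of powers of $A$ --- by, respectively, the strong generalized Hodge conjecture of Corollary~\ref{cor:GHCsurface} and the finite-dimensionality of $\mathfrak{h}(X)$. First, the hypothesis $\gamma^*\HH^{i,j}(X)=0$ for all $j<n$ says precisely that the image of $\gamma^*\colon \HH^*(X,\Q)\to\HH^*(X,\Q)$ carries no $(p,q)$-classes with $q<n$, that is, $\gamma^*\HH^*(X,\Q)\subseteq \mathrm{N}_H^n\HH^*(X,\Q)$. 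Since $X$ is motivated by the abelian variety $A$ of dimension $\leq 2$, Corollary~\ref{cor:GHCsurface} applies to $\HH^*(X,\Q)$; this is exactly \eqref{E:K}, so there is a disjoint union $B$ of abelian varieties and an algebraic correspondence $\Gamma\colon \mathfrak{h}(B)(n)\to\mathfrak{h}(X)$ with $\mathrm{N}_H^n\HH^*(X,\Q)=\Gamma_*\HH^{*-2n}(B,\Q)$.

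The next step is to promote the cohomological containment $\mathrm{im}(\gamma^*)\subseteq \mathrm{im}(\Gamma_*)$ to a factorization of motives modulo homological equivalence. Because $X$ is motivated by an abelian variety, all the homological motives in play lie in a semisimple $\Q$-linear subcategory, homological and numerical equivalence agreeing there. In that category the subobject $\mathrm{im}(\Gamma)\subseteq\mathfrak{h}(X)$ is a direct summand whose realization is $\mathrm{N}_H^n\HH^*(X,\Q)$; a projector onto it yields a homological idempotent $\pi$, factoring through $\mathfrak{h}(B)(n)$, whose action on cohomology is the projection onto $\mathrm{N}_H^n\HH^*(X,\Q)$. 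This $\pi$ plays the role of the projector $q^{k,n}$ of Proposition~\ref{P:niveau}. As $\mathrm{im}(\gamma^*)\subseteq \mathrm{im}(\pi^*)$, we get $\pi\circ{}^t\gamma={}^t\gamma$ modulo homological equivalence; transposing, $\gamma=\gamma\circ {}^t\pi$ modulo homological equivalence, and $\gamma\circ{}^t\pi$ factors through ${}^t\Gamma\colon \mathfrak{h}(X)\to \mathfrak{h}(B)(-n)$.

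Now write $\gamma=f+e$ with $f:=\gamma\circ{}^t\pi$ factoring through $\mathfrak{h}(B)(-n)$ and $e:=\gamma-f$ homologically trivial. Since $X$ is motivated by an abelian variety, $\mathfrak{h}(X)$ is finite-dimensional in the sense of Kimura, so $e$ is nilpotent, say $e^{\circ M}=0$ (this is the general form of Theorem~\ref{T:Kimura}). Expanding $\gamma^{\circ N}$ for $N\geq M$ as a sum of composites of copies of $f$ and $e$, the only summand with no factor of $f$ is $e^{\circ N}=0$, while every other summand contains a factor of $f$ and therefore factors through $\mathfrak{h}(B)(-n)$; hence $\gamma^{\circ N}$ factors, as a morphism of Chow motives, through a finite direct sum of copies of $\mathfrak{h}(B)(-n)$. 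Consequently $(\gamma^{\circ N})_*\colon \CH_r(X)\to\CH_*(X)$ factors through $\CH_{r-n}(B)=0$ for $r<n$, which proves the first assertion; when $\gamma$ is idempotent, $\gamma^{\circ N}=\gamma$, giving $\gamma_*\CH_r(X)=0$ for all $r<n$.

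The main obstacle --- and the only place the hypothesis that $X$ be motivated by an abelian variety of dimension $\leq 2$ is genuinely used --- is the second step: converting the image containment in cohomology into a factorization of homological motives. In Proposition~\ref{P:niveau} this was achieved concretely through the generically defined projectors $q^{k,n}$, which are unavailable here; I would replace them by the semisimplicity of the category of homological motives of abelian varieties together with the correspondence $\Gamma$ furnished by Corollary~\ref{cor:GHCsurface}. Everything downstream of that is the finite-dimensionality argument of Theorem~\ref{T:main2}\,\eqref{I:1}.
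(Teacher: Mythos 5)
Your proof is correct and follows the same overall architecture as the paper's (strong GHC for powers of $X$ via Corollary~\ref{cor:GHCsurface}, a homological projector onto $\mathrm{N}_H^n\HH^*(X,\Q)$ factoring through $\mathfrak{h}(B)(n)$, then Kimura finite-dimensionality), but you obtain the crucial middle step by a different mechanism. The paper observes that, since $\mathrm{Hdg}(A)=L(A)$ for $\dim A\leq 2$, every Hodge sub-structure of $\HH^*(X,\Q)$ is a Lefschetz sub-representation, and then invokes Proposition~\ref{prop:projlef} (whose proof runs through Lemma~\ref{lem:projlef}, O'Sullivan's theorem, and the Cayley--Hamilton construction of Proposition~\ref{P:niveau}) to produce the idempotent with the stated factorization. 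You instead work directly in the category of homological motives of abelian type, where homological and numerical equivalence agree (Lieberman) and the category is semisimple (Jannsen), so that $\mathrm{im}(\Gamma)$ is a direct summand of $\mathfrak{h}^{\mathrm{hom}}(X)$ and a projector onto it automatically factors through $\mathfrak{h}(B)(n)$. For the present theorem, whose conclusion only requires the factorization modulo homological equivalence before feeding it to Kimura, your route is a legitimate and arguably more economical shortcut; what it does \emph{not} give is the extra precision of Proposition~\ref{prop:projlef} (the projector lying in $\R^*$, resp.\ in $\DCH^*$), which the paper needs elsewhere (Theorem~\ref{T:mainLef}, Remark~\ref{rmk:kummerdist}) to upgrade nilpotence to actual vanishing via O'Sullivan. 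You also usefully spell out the $\gamma=f+e$ expansion that the paper leaves implicit in its appeal to Theorem~\ref{T:main2}(1).

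Two small points to tighten. First, as written $f:=\gamma\circ{}^t\pi$ is only a homological class; you should fix Chow-level lifts of the two morphisms through which $\pi$ factors and define $f$ as $\gamma$ composed with their transposes, so that $f$ is an honest Chow correspondence factoring through a twist of $\mathfrak{h}(B)$ and $e:=\gamma-f$ is a homologically trivial Chow cycle to which Kimura nilpotence applies. Second, the correspondence $\Gamma$ of \eqref{E:K} depends on the cohomological degree $k$, so the projector $\pi$ must be assembled degree by degree using the K\"unneth projectors of $X$ (available since $\mathfrak{h}(X)$ is of abelian type), exactly as the paper sums the $q^{k,n}$ over $k$ in the proof of Theorem~\ref{T:main2}. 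Neither point affects the validity of the argument.
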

	\begin{proof}By Corollary~\ref{cor:GHCsurface}, any Hodge sub-structure of
		$\HH^*(X,\Q)$ is a $L(A)$-sub-representation. One can then proceed as in the
		proof of Theorem~\ref{T:mainLef} by invoking Proposition \ref{prop:projlef} to show that the cohomology class of $\gamma$ is a
		linear combination of cycles in $\CH^*(X\times X)$ that factor as
		$$\mathfrak{h}(X)
		\stackrel{\rho}{\longrightarrow} \mathfrak{h}(B)(n) 
		\stackrel{\zeta}{\longrightarrow} \mathfrak{h}(X),$$ 
		for some abelian
		varieties $B$ and some correspondences $\rho$ and
		$\zeta$ that belong to $\CH^*(X\times B)$ and $\CH^*(B\times X)$, respectively. 
One concludes by Kimura finite-dimensionality as for instance in the proof of
		Theorem~\ref{T:main2}(1).
	\end{proof}
	
	\begin{rmk}\label{rmk:kummerdist} In the case where $X$ is a generalized Kummer
		variety, one can be more precise. By \cite[\S 4.5]{fv}, one can define, for
		all
		integers $m\geq 0$, $\Q$-sub-algebras  $\DCH^*(X^m) \subseteq \CH^*(X^m)$ consisting of \emph{distinguished cycles} that
		map isomorphically to $\overline{\CH}^*(X^m)$ and that are compatible with
		pushforwards and pullbacks along projections. In particular, the composition of distinguished correspondences is distinguished. As such, in Theorem~\ref{T:app}, if one
		chooses $\gamma$ to be a correspondence in $\DCH^*(X\times X)$ such that
		$\gamma^*\HH^{i,j}(X)=0$ for all $j<n$, then Proposition~\ref{prop:projlef} shows that $\gamma$ is a
		linear combination of cycles in $\DCH^*(X\times X)$ that factor as
		$$\mathfrak{h}(X)
		\stackrel{\rho}{\longrightarrow} \mathfrak{h}(B)(n) 
		\stackrel{\zeta}{\longrightarrow} \mathfrak{h}(X),$$ 
		for some abelian
		varieties $B$ and some correspondences $\rho$ and
		$\zeta$ that belong to $\DCH^*(X\times B)$ and $\DCH^*(B\times X)$, respectively. One concludes that  $\gamma_*\CH_r(X) = 0$ for all
		$r<n$.
	\end{rmk}

	\begin{rmk}
		The results of Sections~\ref{s:lin} and \ref{s:voisin} could have been
		established by referring to
		Theorem~\ref{T:app} instead of Theorem~\ref{T:main}. We chose to refer to
		Theorem
		\ref{T:main} (which is concerned with generically defined cycles) because it
		is more elementary and does not appeal
		to
		Abdulali's theorem on the generalized Hodge conjecture for powers of CM
		abelian
		surfaces. Moreover the approach using generically defined cycles seems more
		natural and is probably better suited to adapt to other situations. Nonetheless,
		Theorem~\ref{thm:sympGK} below will use the full
		strength of Theorem~\ref{T:app}.
	\end{rmk}

	\subsection{Finite-order symplectomorphisms on generalized Kummer
		varieties}\label{sec:symplecto}
	
	Let $(X,\omega)$ be a symplectic variety, that is, a smooth projective variety
	equipped with a nowhere degenerate $2$-form $\omega$.
	A symplectomorphism of $(X,\omega)$ is an automorphism $f : X \to X$ such that
	$f^*\omega = \omega$. If $X$ is irreducible symplectic,  it is
	expected as part of the Bloch conjectures that symplectomorphisms act unipotently on the Chow group of $0$-cycles, and, due to the probable \emph{distinguishedness} of symplectomorphisms in the sense of \cite{fv}, it is in fact expected that symplectomorphisms act as the identity on the Chow group of $0$-cycles. Most notably, this was established
	for symplectic involutions on K3 surfaces by Voisin \cite{voisink3} and
	extended to finite-order symplectomorphisms on K3 surfaces by Huybrechts
	\cite{huybrechts}. This was also established for polarized symplectomorphisms
	of Fano varieties of lines on smooth cubic fourfolds by Fu \cite{fu}, that is,
	for symplectomorphisms that preserve a given polarization.
	We extend that type of results to generalized Kummer varieties.
	
	\begin{thm}\label{thm:sympGK}
		Let $A$ be an abelian surface and let  $f$ be a symplectomorphism of
		the
		generalized Kummer variety $K_n(A)$. Then $f_* : \CH_0(K_n(A)) \to
		\CH_0(K_n(A))$ is unipotent. In particular, if $f$  is a finite-order symplectomorphism, then $f_* : \CH_0(K_n(A)) \to
		\CH_0(K_n(A))$ is the identity.
	\end{thm}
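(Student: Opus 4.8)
The plan is to apply Theorem~\ref{T:app} to the self-correspondence $\gamma := \Delta_{K_n(A)} - \Gamma_f \in \CH^{2n}(K_n(A)\times K_n(A))$, where $\Gamma_f$ is the graph of $f$ and $2n = \dim K_n(A)$. A generalized Kummer variety is motivated by the abelian surface $A$ (see \cite{xu} and \cite[Cor.~6.3]{ftv}), so the variety $X := K_n(A)$ meets the hypotheses of Theorem~\ref{T:app} with an abelian variety of dimension $\leq 2$. It then remains only to verify the cohomological vanishing $\gamma^*\HH^{i,j}(X)=0$ for all $j<1$, that is for $j=0$, and afterwards to translate the output of Theorem~\ref{T:app} into the stated assertions on $\CH_0$.

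First I would check the cohomological hypothesis. Since $K_n(A)$ is hyperK\"ahler of dimension $2n$, one has $\HH^{i,0}(X)=0$ for $i$ odd, while for $i=2k$ even the space $\HH^{2k,0}(X)$ is one-dimensional, spanned by $\omega^k$, where $\omega$ generates $\HH^{2,0}(X)$. As $f$ is a symplectomorphism, $f^*\omega=\omega$, whence $f^*\omega^k=\omega^k$, and $f^*$ therefore acts as the identity on $\HH^{i,0}(X)$ for every $i$. Consequently $\gamma^* = \mathrm{id}-f^*$ annihilates $\HH^{i,0}(X)$ for all $i$, so the hypothesis of Theorem~\ref{T:app} is satisfied with $n=1$.

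Next I would invoke Theorem~\ref{T:app}: there is an integer $N\geq 1$ with $(\gamma^{\circ N})_*\CH_r(X)=0$ for all $r<1$, that is $(\gamma^{\circ N})_*\CH_0(X)=0$. Because pushforward on zero-cycles is functorial for composition of correspondences and $\gamma_* = \mathrm{id}-f_*$ on $\CH_0(X)$, this says precisely that $(\mathrm{id}-f_*)^N = 0$ on $\CH_0(K_n(A))$. Writing $f_* = \mathrm{id}+\nu$, the operator $\nu$ is nilpotent, so $f_*$ is unipotent, which is the first assertion. For the second, assume $f$ has finite order $m$, so that $f_*^m=\mathrm{id}$; expanding $(\mathrm{id}+\nu)^m=\mathrm{id}$ yields $\nu\bigl(m\,\mathrm{id}+\tbinom{m}{2}\nu+\cdots+\nu^{m-1}\bigr)=0$, and the parenthesized factor is $m\,\mathrm{id}$ plus a nilpotent, hence invertible since $m\neq 0$ in $\Q$. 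Therefore $\nu=0$ and $f_*=\mathrm{id}$.

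The conceptual weight of the argument is carried entirely by Theorem~\ref{T:app}, whose proof rests on Proposition~\ref{prop:projlef} and Abdulali's strong generalized Hodge conjecture for powers of abelian surfaces (Theorem~\ref{thm:GHC}). Once that machinery is granted, the only genuine point requiring care is the identification of $\HH^{i,0}(X)$ with the powers of the symplectic class, which is what forces a symplectomorphism to act trivially there and supplies the hypothesis with $n=1$. Thus I expect the main obstacle to lie not in the present argument, which is formal, but in the generalized Hodge conjecture input imported through Theorem~\ref{T:app}.
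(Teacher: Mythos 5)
Your proof is correct and follows essentially the same route as the paper: both verify that $f^*$ is the identity on $\HH^{i,0}(K_n(A))=\C\,\omega^{i/2}$, apply Theorem~\ref{T:app} to $\Delta_{K_n(A)}-\Gamma_f$ to get nilpotence of $\mathrm{id}-f_*$ on $\CH_0$, and then remove the nilpotent part in the finite-order case by an elementary algebraic argument (your expansion of $(\mathrm{id}+\nu)^m=\mathrm{id}$ is equivalent to the paper's gcd-of-polynomials observation).
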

	\begin{proof} Since $\HH^{2i,0}(K_n(A)) = \HH^0(\Omega_{K_n(A)}^{2i}) =
		\C\omega^i$, and
		by definition of a symplectomorphism, $f^*$ acts as the identity on
		$\HH^{2i,0}(K_n(A))$ for all $i$. Therefore, by Theorem~\ref{T:app},
		$f-\mathrm{id}$ acts nilpotently on $\CH_0(K_n(A))$. Suppose now that $f$
		has
		finite order. In particular a positive power of $f$ acts as the identity on
		$\CH_0(K_n(A))$. Since the gcd of the polynomials $X^n-1$ and $(X-1)^N$ is
		$X-1$, we find that $f - \mathrm{id}$ acts as zero on $\CH_0(K_n(A))$.
	\end{proof} 

	Finally we note that 
	if $f$ is a symplectomorphism of the generalized Kummer variety $K_n(A)$
	induced by a symplectomorphism of $A$, then Pawar \cite{pawar} showed that
	$f_*$ acts as the identity
	on $\CH_0(K_n(A))_{(2n)}$ (as defined in \S \ref{s:lin}). 
We can extend Pawar's result and show that
	$f_*$ acts as the identity
	on the whole of $\CH_0(K_n(A))$\,:
	
	\begin{prop}\label{prop:genkuminduced}
Suppose $f$ is a symplectomorphism of the generalized Kummer variety $K_n(A)$
induced by a symplectomorphism of $A$. Then
$f_*$ acts as the identity
on $\CH_0(K_n(A))$.
	\end{prop}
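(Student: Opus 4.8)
The plan is to reduce to the case of an automorphism induced by a group homomorphism of $A$, and then to exploit the distinguished-cycle structure on generalized Kummer varieties together with the distinguished refinement of Theorem~\ref{T:app} recorded in Remark~\ref{rmk:kummerdist}. Write the symplectomorphism of $A$ inducing $f$ as $g = t_a \circ \phi$, where $\phi$ is a group automorphism of $A$ fixing the origin and $t_a$ is translation by a point $a\in A$ (recall $\mathrm{Aut}(A) = A\rtimes \mathrm{Aut}_{\mathrm{grp}}(A)$). For $g$ to descend to an automorphism of $K_n(A)=\Sigma^{-1}(0)\subseteq \mathrm{Hilb}^{n+1}(A)$ one computes that the sum of $\{\phi(x_i)+a\}$ equals $\phi(\sum x_i)+(n+1)a$, so the fibre over $0$ is preserved precisely when $(n+1)a=0$; thus $a$ is a torsion point. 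Since translations act trivially on $\HH^{2,0}(A)$, the hypothesis $g^*\omega_A=\omega_A$ forces $\phi^*\omega_A=\omega_A$, i.e. $\phi$ is itself symplectic. By functoriality of the construction $K_n(-)$ one has $f=\tau\circ h$, where $\tau$ and $h$ are induced by $t_a$ and $\phi$. Now $\tau$ is a \emph{finite-order} symplectomorphism of $K_n(A)$, so Theorem~\ref{thm:sympGK} gives $\tau_*=\mathrm{id}$ on $\CH_0(K_n(A))$; hence $f_*=h_*$ and it suffices to treat $h$.

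The next step records two facts about $h$. First, since $\phi$ is a homomorphism, its graph $\Gamma_\phi$ lies in $\R^*(A\times A)\subseteq \DCH^*(A\times A)$ by Proposition~\ref{prop:R}(c) and O'Sullivan's Theorem~\ref{T:Osullivan}; I would then invoke the naturality of the distinguished-cycle structure on powers of $K_n(A)$ constructed in \cite{fv} to deduce that the graph $\Gamma_h$ is distinguished, $\Gamma_h\in \DCH^{2n}(K_n(A)\times K_n(A))$. As the diagonal is distinguished, so is $\gamma:=\Gamma_h-\Delta_{K_n(A)}$. Second, because $\phi$ is symplectic and $\HH^{2,0}(K_n(A))$ is spanned by the symplectic form arising from $\HH^{2,0}(A)$, the map $h^*$ fixes $\omega_{K_n(A)}$ and hence acts as the identity on $\HH^{2i,0}(K_n(A))=\C\,\omega^i$ for all $i$; since $\HH^{2i+1,0}(K_n(A))=0$, we obtain $\gamma^*\HH^{i,j}(K_n(A))=0$ for all $j<1$.

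To conclude, I would apply Remark~\ref{rmk:kummerdist} (the distinguished version of Theorem~\ref{T:app}, available because $K_n(A)$ is motivated by the abelian surface $A$) to the distinguished correspondence $\gamma$, with the role of the integer $n$ there played by $1$: from $\gamma^*\HH^{i,j}=0$ for $j<1$ one gets $\gamma_*\CH_r(K_n(A))=0$ for all $r<1$, that is $\gamma_*\CH_0(K_n(A))=0$. Therefore $h_*=\mathrm{id}$ on $\CH_0(K_n(A))$, and consequently $f_*=\mathrm{id}$ as well, extending Pawar's result. The main obstacle is the distinguishedness claim $\Gamma_h\in\DCH^*(K_n(A)\times K_n(A))$: one must verify that an automorphism of $K_n(A)$ induced by a homomorphism of $A$ respects the distinguished-cycle structure of \cite{fv}, which is where the geometric comparison between $K_n(A)$ and the symmetrically distinguished cycles on $A^{n+1}$ (equivalently $A_0^{n+1}$) enters. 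Once this compatibility is in place, everything else is formal; the reduction in the first paragraph, via Theorem~\ref{thm:sympGK}, has the virtue of confining the distinguishedness input to graphs of genuine homomorphisms, for which $\R^*\subseteq\DCH^*$ is immediate.
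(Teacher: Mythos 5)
Your proof is correct, and its engine is the same as the paper's: apply the distinguished refinement of Theorem~\ref{T:app} recorded in Remark~\ref{rmk:kummerdist} to $\gamma = \Gamma_f - \Delta_{K_n(A)}$, once one knows that $\Gamma_f$ lies in $\DCH^{2n}(K_n(A)\times K_n(A))$. The paper's entire proof consists of exactly this observation, citing \cite[\S 5.5]{fv} for the distinguishedness of the graph of any automorphism of $K_n(A)$ induced by a symplectomorphism of $A$. What you add is the preliminary splitting $g = t_a\circ\phi$ with $(n+1)a=0$, disposing of the translation part via Theorem~\ref{thm:sympGK} (it induces a finite-order symplectomorphism of $K_n(A)$); this correctly confines the distinguishedness input to graphs of genuine group homomorphisms, for which the statement at the level of $A$ itself is immediate from Proposition~\ref{prop:R}(c) and O'Sullivan's Theorem~\ref{T:Osullivan}. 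The one step you flag as the main obstacle --- that the graph of the induced automorphism of $K_n(A)$ is distinguished, \emph{i.e.} that the $\DCH^*$-structure of \cite{fv} is compatible with passing from $A_0^{n+1}$ to $K_n(A)$ --- is precisely the point the paper also does not reprove but delegates to \cite[\S 5.5]{fv}, so your argument has no gap beyond what the paper itself outsources; your reduction simply makes that outsourced input slightly weaker.
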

	\begin{proof}
One uses Remark~\ref{rmk:kummerdist} and
notes that the graph of a symplectomorphism induced by a symplectomorphism of
$A$
belongs to the sub-algebra $\DCH^*(K_n(A)\times K_n(A))$ defined in \cite[\S
5.5]{fv}.
	\end{proof}


\begin{thebibliography}{S3}
		\bibitem{abdulali} Salman~Abdulali, \textit{Hodge structures of CM-type},
		J. Ramanujan Math. Soc. \textbf{20} (2005),  155--162. 
		\bibitem{abdulali2} Salman~Abdulali, \textit{Tate twists of Hodge structures
			arising from abelian varieties of type IV}, J. Pure Appl. Algebra
		\textbf{216}
		(2012), 1164--1170.
		\bibitem{abdulali3} Salman~Abdulali, \textit{Tate twists of Hodge structures
			arising from abelian varieties}, in Recent Advances in Hodge Theory: Period
		Domains, Algebraic Cycles, and Arithmetic (M. Kerr and G. Pearlstein, eds.),
		London Mathematical Society Lecture Note Series \textbf{427}, Cambridge
		University Press, 2016, pp. 292--307.
		\bibitem{ACMV}Jeff~Achter, Sebastian~Casalaina-Martin, S. and Charles~Vial, \textit{Distinguished models of intermediate Jacobians}, J. Inst. Math. Jussieu, to appear.
		\bibitem{ancona2}
		Giuseppe Ancona, \emph{D\'ecomposition du motif d'un sch\'ema ab\'elien
			universel}, Ph.D. thesis, Universit\'e Paris 13, 2012, Th\`ese de doctorat
		Math\'ematiques, p.~60.
		\bibitem{ancona} Giuseppe~Ancona, \textit{D\'ecomposition de motifs
			ab\'eliens},
		Manuscripta Math. \textbf{146} (2015),  307--328. 
		\bibitem{andreIHES}Yves~Andr\'e, 
		\textit{Pour une th\'eorie inconditionnelle des motifs},
		Inst. Hautes \'Etudes Sci. Publ. Math. \textbf{83} (1996), 5--49. 
		\bibitem{andre}Yves~Andr\'e, \textit{Une introduction aux motifs (motifs purs,
			motifs mixtes, périodes)},  Panoramas et Synth\`eses  \textbf{17}.
		Soci\'et\'e
		Math\'ematique de France, Paris, 2004.
		\bibitem{beauvillec1}  Arnaud~Beauville, \textit{Vari\'et\'es K\"ahl\'eriennes dont la premi\`ere classe de Chern est nulle}, J. Differential Geom. \textbf{18} (1983), 755--782.
		\bibitem{beauvilleFourier} Arnaud~Beauville, 
		\textit{Quelques remarques sur la transformation de Fourier dans l'anneau de
			Chow d'une vari\'et\'e ab\'elienne}, 
		Algebraic geometry (Tokyo/Kyoto, 1982), 238--260, 
		Lecture Notes in Math., 1016, Springer, Berlin, 1983. 
		\bibitem{beauville} Arnaud~Beauville, \textit{Sur l'anneau de Chow d'une
			vari\'et\'e ab\'elienne,} Math. Ann. \textbf{273}, 647--651
		(1986).
		\bibitem{bl} Christina Birkenhake and Herbert Lange, \textit{The dual
			polarization of an abelian variety},
		Arch. Math. (Basel) \textbf{73} (1999),  380--389. 
		\bibitem{bkl} Spencer Bloch, Arnold Kas, David Lieberman, \textit{
			Zero cycles on surfaces with $p_g=0$}, 
		Compositio Math. \textbf{33} (1976),  135--145. 
		\bibitem{bs} Spencer~Bloch and Vasudevan Srinivas, \textit{Remarks on
			correspondences and algebraic cycles},
		Amer. J. Math. \textbf{105} (1983),  1235--1253.  
		\bibitem{bourbaki}Nicolas~Bourbaki, {Groupes et alg\`ebres de Lie. Chapitres
			VII et VIII}, Hermann, Paris 1975. 
		\bibitem{ceresa}Giuseppe~Ceresa, \textit{
		$C$ is not algebraically equivalent to $C^-$ in its Jacobian}, 
		Ann. of Math. \textbf{117} (1983), 285--291. 
		\bibitem{dm}Christopher~Deninger and Jacob~Murre, \textit{Motivic
			decomposition
			of abelian schemes and the Fourier transform},
		J. Reine Angew. Math. \textbf{422} (1991), 201--219. 
		\bibitem{friedlander}Eric~Friedlander, \textit{Filtrations on algebraic cycles
			and homology}, 
		Ann. Sci. \'Ecole Norm. Sup.  \textbf{28} (1995),  317--343. 
		\bibitem{fp}Roberto Fringuelli and Roberto Pirisi, \textit{The Picard group of
			the universal abelian variety and the Franchetta conjecture for abelian
			varieties}, arXiv:1603.09190.
		\bibitem{fu} Lie~Fu, \textit{On the action of symplectic automorphisms on the
			$CH_0$-groups of some hyper-K\"ahler fourfolds},
		Mathematische Zeitschrift \textbf{280} (2015), 307--334.
		\bibitem{flv} Lie~Fu, Robert~Laterveer, and Charles~Vial, \textit{The
			generalized Franchetta conjecture for some hyper-Kaehler varieties},
	 J. Math. Pures Appl., to appear.
		\bibitem{ftv} Lie~Fu, Zhiyu~Tian, and Charles~Vial, \textit{Motivic
			hyperK\"ahler resolution conjecture: I. Generalized Kummer varieties},
		 Geom. Topol. \textbf{23} (2019),  427--492. 
		\bibitem{fv} Lie~Fu and Charles~Vial, \textit{Distinguished cycles on
			varieties with motive of abelian type and the section property}, J. Algebraic Geom., to appear.
		2017.
		\bibitem{FH} William Fulton and Joe Harris,
		\textit{Representation theory}, 
		A first course. Graduate Texts in Mathematics, \textbf{129}.  Springer-Verlag,
		New York, 1991. xvi+551 pp.
		\bibitem{hazama2}Fumio~Hazama, \textit{Algebraic cycles on certain abelian
			varieties and powers of special surfaces},	J. Fac. Sci. Univ. Tokyo Sect. IA
		Math. \textbf{31} (1985), 487--520. 
		\bibitem{hazama}Fumio~Hazama, \textit{The generalized Hodge conjecture for
			stably nondegenerate abelian varieties}, 
		Compositio Math. \textbf{93} (1994), 129--137. 
		\bibitem{hazama3}Fumio~Hazama, \textit{On the general Hodge conjecture for
			abelian varieties of CM-type},
		Publ. Res. Inst. Math. Sci. \textbf{39} (2003), 625--655. 
		\bibitem{huybrechts}Daniel~Huybrechts, \textit{Symplectic automorphisms of K3
			surfaces of arbritrary finite order}, Math. Res. Lett. \textbf{19} (2012),
		947--951.
		\bibitem{kimura}Shun--Ichi Kimura, \textit{Chow groups are finite dimensional,
			in some sense},
		Math. Ann. \textbf{331} (2005), 173--201.
		\bibitem{kleiman}Steven~Kleiman, \textit{Algebraic cycles and the Weil
			conjectures}, Dix expos\'es sur la cohomologie des sch\'emas, 359--386, 
		Adv. Stud. Pure Math., \textbf{3}, North-Holland, Amsterdam, 1968. 
		\bibitem{laterveer1}Robert~Laterveer, \textit{Some results on a conjecture of
			Voisin for surfaces of geometric genus one}, Boll. Unione Mat. Italiana, to
		appear.
		\bibitem{laterveer2}Robert~Laterveer, \textit{Some desultory remarks
			concerning algebraic cycles and Calabi--Yau threefolds}, Rend. Circ. Mat.
		Palermo, to appear.
		\bibitem{lin} Hsueh-Yung~Lin, \textit{
			On the Chow group of zero-cycles of a generalized Kummer variety},
		Adv. Math. \textbf{298} (2016), 448--472. 
			\bibitem{lin2} Hsueh-Yung~Lin, \textit{Corrigendum to ``On the Chow group of zero-cycles of a generalized Kummer variety''}, Adv. Math. \textbf{331} (2018), 1016--1021.
		\bibitem{milne}James~Milne, \textit{Lefschetz classes on abelian varieties},
		Duke Math. J. \textbf{96} (1999), 639--675. 
		\bibitem{MZ}Ben~Moonen and Yuri~Zarhin, \textit{Hodge classes on abelian
			varieties of low dimension}, Math. Ann. \textbf{315} (1999), 711--733.
		\bibitem{moonen}
		Ben Moonen, \emph{On the {C}how motive of an abelian scheme with non-trivial
			endomorphisms}, J. Reine Angew. Math. \textbf{711} (2016), 75--109.
		\MR{3456759}
		\bibitem{mumford}David~Mumford, John~Fogarty, Frances~Kirwan,
		\textit{Geometric invariant theory}, 
		Third edition. Ergebnisse der Mathematik und ihrer Grenzgebiete, \textbf{34}.
		Springer-Verlag, Berlin, 1994. xiv+292 pp.
		\bibitem{murty} Kumar~Murty, \textit{Exceptional Hodge classes on certain
			abelian varieties},
		Math. Ann. \textbf{268} (1984),  197--206. 
		\bibitem{nori} Madhav~Nori, \textit{Cycles on the generic abelian threefold}, 
		Proc. Indian Acad. Sci. Math. Sci. \textbf{99} (1989), 191--196. 
		\bibitem{o'sullivan} Peter O'Sullivan, \textit{Algebraic cycles on an abelian
			variety},
		J. Reine Angew. Math. \textbf{654} (2011), 1--81. 
		\bibitem{pawar} Rakesh~Pawar, \textit{Action of correspondences on filtrations
			on cohomology and $0$-cycles of abelian varieties},  Math. Z. \textbf{292} (2019),  655-675.
		\bibitem{ribet} Kenneth~Ribet, \textit{Hodge classes on certain types of
			abelian varieties},
		Amer. J. Math. \textbf{105} (1983),  523--538. 
		\bibitem{sv} Mingmin~Shen and Charles~Vial, \textit{The Fourier transform for
			certain hyperK\"ahler fourfolds}, Mem. Amer. Math. Soc. \textbf{240} (2016),
		no. 1139, vii+163 pp.
		\bibitem{shermenev}Alexander Shermenev, 
		\textit{Motif of an Abelian variety}, 
		Funckcional. Anal. i Prilozen 8 (1974), 55–61. 
		\bibitem{shimura} Goro Shimura, \textit{On analytic families of polarized
			abelian varieties and automorphic functions}, 
		Ann. of Math. \textbf{78} 1963 149--192. 
		\bibitem{scholl}Tony~Scholl, \textit{Classical motives}, Motives (Seattle, WA,
		1991), 163--187, Proc, Sympos. Pure Math., \textbf{55}, Part 1, 1994.
		\bibitem{tankeev} S.~G.~Tankeev, \textit{Cycles on simple abelian varieties of
			prime dimension},
		Izv. Akad. Nauk SSSR Ser. Mat. \textbf{46} (1982), 155--170, 192. 
		\bibitem{tankeev2} S.~G.~Tankeev, \textit{Abelian varieties and the general
			Hodge conjecture},
		Izv. Ross. Akad. Nauk Ser. Mat. \textbf{57} (1993), 192--206.
		\bibitem{vialniveau}Charles~Vial, \textit{Niveau and coniveau filtrations on
			cohomology groups and Chow groups},
		Proc. Lond. Math. Soc. \textbf{106} (2013), 410--444. 
		\bibitem{voevodsky} Vladimir~Voevodsky, 
		\textit{A nilpotence theorem for cycles algebraically equivalent to zero},
		Internat. Math. Res. Notices 1995, 187--198.
		\bibitem{voisingodeaux}Claire Voisin, \textit{Sur les z\'ero-cycles de
			certaines hypersurfaces munies d'un automorphisme}, 
		Ann. Scuola Norm. Sup. Pisa Cl. Sci. (4) \textbf{19} (1992), 473--492. 
		\bibitem{voisin0}Claire~Voisin, \textit{Remarks on zero-cycles of
			self-products
			of varieties}, Moduli of vector bundles (Sanda, 1994; Kyoto, 1994), 265--285,
		Lecture Notes in Pure and Appl. Math., \textbf{179}, Dekker, New York, 1996. 
		\bibitem{voisink3}Claire Voisin, \textit{Symplectic involutions of K3 surfaces
			act trivially on $CH_0$}, 
		Doc. Math. \textbf{17} (2012), 851--860. 
		\bibitem{voisincatanese}Claire Voisin, \textit{Bloch's conjecture for Catanese
			and Barlow surfaces}, 
		J. Differential Geom. \textbf{97} (2014),  149--175. 
		\bibitem{voisinbook}Claire Voisin, \textit{Chow rings, decomposition of the
			diagonal, and the topology of families},
		Annals of Mathematics Studies, \textbf{187}. Princeton University Press,
		Princeton, NJ, 2014. viii+163 pp.
		\bibitem{xu} Ze Xu, \textit{Algebraic cycles on a generalized Kummer variety},
		Int. Math. Res. Not. IMRN 2018, 932--948. 
	\end{thebibliography}
\end{document}